\newcommand{\N}{\mathbb N}
\newcommand{\RR}{{{\rm I} \kern -.15em {\rm R} }}
\begin{document}
	\theoremstyle{plain} \newtheorem{thm}{Theorem}[section] \newtheorem{cor}[thm]{Corollary} \newtheorem{lem}[thm]{Lemma} \newtheorem{prop}[thm]{Proposition} \theoremstyle{definition} \newtheorem{defn}{Definition}[section] 
	
	\newtheorem{oss}[thm]{Remark}
	\newtheorem{ex}{Example}[section]
	\newtheorem{lemma}{Lemma}[section]
	\title{Hegselmann-Krause and Cucker-Smale type models with attractive-repulsive interaction}
	\author{Elisa Continelli\hspace{0.2cm}\&\hspace{0.2cm}Cristina Pignotti \\Dipartimento di Ingegneria e Scienze dell'Informazione e Matematica\\
		Universit\`{a} degli Studi di L'Aquila\\
		Via Vetoio, Loc. Coppito, 67100 L'Aquila Italy}

	\maketitle

	\begin{abstract}
		In this paper, we analyze a Hegselmann-Krause opinion formation model and a Cucker-Smale flocking model with attractive-repulsive interaction. To be precise, we investigate the situation in which the individuals involved in an opinion formation or a flocking process attract each other in certain time intervals and repeal each other in other ones. Under quite general assumptions, we prove the convergence to consensus for the Hegselmann-Krause model and the exhibition of asymptotic flocking for the Cucker-Smale model in presence of positive-negative interaction. With some additional conditions, we are able to improve the convergence to consensus for the solutions of the Hegselmann-Krause model, namely we establish an exponential convergence to consensus result. 
	\end{abstract}
\section{Introduction}
	Multiagent systems have been widely studied in these last years, due to their application to several scientific fields, such as biology \cite{Cama, CS1}, economics \cite{Marsan, Jack}, robotics \cite{Bullo, Jad}, control theory \cite{Aydogdu, Borzi, WCB, PRT, Piccoli, Almeida2}, social sciences \cite{Bellomo, Campi, CF, Ajmone, Lac}. Among them, there are the Hegselmann-Krause opinion formation model \cite{HK} and its second order version, the Cucker-Smale flocking model, introduced in \cite{CS1} to describe flocking phenomena. For the solutions of such systems, the asymptotic behaviour is usually investigated, namely, the convergence to consensus or the exhibition of asymptotic flocking are analyzed for the Hegselmann-Krause model and the Cucker-Smale model, respectively. 
	
	For the two aforementioned multiagent systems, different scenarios have been considered. Among others, there is the situation in which the particles of the system interrupt their interaction at certain times. In \cite{Bonnet}, the convergence to consensus and the exhibition of asymptotic flocking for a class of Cucker-Smale systems with communication failures have been proved under suitable assumptions. With a different approach, the exponential convergence to consensus for the solutions of Hegselmann-Krause type models  involving a weight function that can degenerate at some times has been achieved in \cite{onoff}, also in presence of time delays.

	 The Hegselmann-Krause model and the Cucker-Smale model in presence of time delays, but in the not degenerate case,  have been also analyzed by several authors (see e.g. \cite{Cont, ContPign, Lu, LW, CH, CL, PT, HM, DH,  CPP, H3, Cartabia})
	
	A situation that to our knowledge has been not examined yet is the one in which the agents have positive-negative interaction, namely the system's particles attract each other in certain time intervals and repeal each other in other ones. 
	
	The aim of this paper is indeed to find conditions for which the convergence to consensus and the exhibition of asymptotic flocking are guaranteed for the solutions of the Hegselmann-Krause model and of the Cucker-Smale model, respectively, with attractive-repulsive interaction. In this case, in order to get the asymptotic consensus or the asymptotic flocking, one has to compensate the behaviour of the solutions to the considered model in the {\it bad} intervals, i.e. the intervals in which the agents repeal each other, with the {\it good} behaviour of the solutions of the system in the intervals in which the influence among the agents is positive. Under suitable assumptions, we are able to prove the convergence to consensus for the Hegselmann-Krause model with attractive-repulsive interaction and the exhibition of asymptotic flocking for the Cucker-Smale model with attractive-repulsive interaction. Of course, this will require some restrictions on the length of the {\it bad} time intervals.
Under some additional conditions, it can be proved that the solutions of the Hegselmann-Krause model in presence of positive-negative interaction converge exponentially to consensus.
	
	The paper is organized as follows. In Section 2, we deal with a Hegselmann-Krause model with attractive-repulsive interaction in presence of a very general type of influence function. We give the definition of diameter for the solutions to the model and the rigorous definition of convergence to consensus. Also, we state our main result, that ensures the asymptotic consensus for our Hegselmann-Krause type model under quite general assumptions. 

In Section 3, we present some preliminary results that are needed for the proof of our convergence to consensus result while,  in Section 4, we prove our main result. Also, under less general assumptions, we provide an exponential convergence to consensus result. 

In Section 5 we formulate the convergence to consensus for a Hegselmann-Krause model with attractive-repulsive interaction in the particular case in which the influence function is no longer a general function of the opinions of the agents but a function of the distance of the agents' opinions than can eventually tend to zero as the distance of the opinions becomes too large. 

In Section 6, we consider a Cucker-Smale model in presence of attractive-repulsive interaction. In this case, the influence function cannot be a general function of the 
agents' opinion but must rather be a function of the distance of the opinions of the system's particles. However, the monotonicity of the influence function, that is assumed in general when dealing with Cucker-Smale type models, is removed (see also \cite{Cont}). We give the definitions of the diameter in position, the diameter in velocity, and the exhibition of asymptotic flocking for the solutions of the Cucker-Smale model. Also, we state our main result, that ensures that every solution to the Cucker-Smale model with attractive-repulsive interaction reaches the asymptotic flocking under suitable assumptions.

Finally, in Section 7 we prove our flocking theorem through the use of some preliminary results.
	\section{The Hegselmann-Krause model with attractive-repulsive interaction and general  influence function}
	Let us denote with $\mathbb{N}_0:=	\mathbb{N}\cup \{0\}$. Consider a finite set of $N\in\N$ particles, with $N\geq 2 $. Let $x_{i}(t)\in \RR^d$ be the opinion of the $i$-th individual at time $t$. We shall denote with $\lvert\cdot \rvert$ and $\langle\cdot,\cdot\rangle$ the usual norm and scalar product on $\RR^{d}$, respectively. The interactions between the elements of the system are described by the following  Hegselmann-Krause type model 
	\begin{equation}\label{onoff}
		\frac{d}{dt}x_{i}(t)=\underset{j:j\neq i}{\sum}\alpha(t) a_{ij}(t)(x_{j}(t)-x_{i}(t)),\quad t>0,	\,\, \forall i=1,\dots,N.
	\end{equation}
	with initial conditions
	\begin{equation}\label{incond}
		x_{i}(0)=x^{0}_{i}\in \mathbb{R}^{d},\quad \forall i=1,\dots,N.
	\end{equation}
Here, the communication rates $a_{ij}$ are of the form
	\begin{equation}\label{weight}
		a_{ij}(t):=\frac{1}{N-1}\psi( x_{i}(t), x_{j}(t)), \quad\forall t>0,\, \forall i,j=1,\dots,N,
	\end{equation}
	where $\psi:\RR^d\times\RR^d\rightarrow \RR$ is a given influence function. We assume that the influence function $\psi$ is positive, continuous, and bounded and we denote 
	\begin{equation}\label{K}
		K:=\lVert \psi\rVert_{\infty}.
	\end{equation} 
Moreover, the weight function $\alpha:[0,+\infty)\rightarrow\{-1,1\}$ is defined as follows
	\begin{equation}\label{alpha1}
		\alpha(0)=1,\quad\alpha(t)=\begin{cases}
			1,\quad &t\in \left(t_{2n},t_{2n+1}\right), \,\ \, \ n\in \mathbb{N}_{0},\\-1,\quad &t\in \left[t_{2n+1},t_{2n+2}\right], \,\, n\in \mathbb{N}_{0},
		\end{cases}
	\end{equation}
	where $\{t_{n}\}_{n}$ is a sequence of nonnegative numbers such that $t_{0}=0$, $t_{n}\to \infty$ as $n\to \infty$ and 
	\begin{equation}\label{tn}
		t_{2n+2}-t_{2n+1}<\frac{\ln 2}{K},\quad \forall n\in \mathbb{N}_{0}.
	\end{equation}
	Also, we set
	\begin{equation}\label{M0}
		M_{n}^{0}:=\max_{i=1,\dots,N}\lvert x_{i}(t_{n})\rvert.
	\end{equation}
	We define the diameter $d(\cdot)$ of the system as follows:
	$$d(t):=\max_{i,j=1,\dots,N}\lvert x_{i}(t)-x_{j}(t)\rvert,\quad \forall t\geq 0.$$
	
	\begin{defn}\label{consensus}
		We say that a solution $\{x_{i}\}_{i=1,\dots,N}$ to system \eqref{onoff} converges to \textit{consensus} if $$\lim_{t\to+\infty}d(t)=0.$$
	\end{defn}
	We will prove the following convergence to consensus result.
	\begin{thm}\label{cons}
		Let $\psi:\RR^d\times \RR^d\rightarrow\RR$ be a positive, bounded, continuous function. Assume that the sequence $\{t_{n}\}_{n}$ of definition \eqref{alpha1} satisfies \eqref{tn}. Assume also that the following conditions hold:
		\begin{equation}\label{sum1}
			\sum_{p=0}^{\infty}\ln\left(\frac{e^{K(t_{2p+2}-t_{2p+1})}}{2-e^{K(t_{2p+2}-t_{2p+1})}}\right)<+\infty,
		\end{equation}
	\begin{equation}\label{sum2}
		\sum_{p=0}^{\infty}\ln\left(\max\left\{1-e^{-K(t_{2p+1}-t_{2p})},1-\frac{\psi_{0}}{K}(1-e^{-K(t_{2p+1}-t_{2p})})\right\}\right)=-\infty,
	\end{equation} 
where
\begin{equation}\label{psi0}
	 \psi_{0}:=\min_{\lvert y\rvert,\lvert z\rvert\leq M^{0}} \psi(y,z),
\end{equation}
being 
\begin{equation}\label{m0}
	M^{0}=e^{K\sum_{p=0}^{\infty}(t_{2p+2}-t_{2p+1})}M^{0}_{0}.
\end{equation}Then, every solution $\{x_{i}\}_{i=1,\dots,N}$ to \eqref{onoff} with the initial conditions \eqref{incond} converges to consensus.
		\end{thm}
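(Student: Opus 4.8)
The plan is to reduce the statement to two per-interval estimates on the diameter $d(\cdot)$ and then iterate them; the hypotheses \eqref{sum1} and \eqref{sum2} are precisely what makes the resulting infinite product vanish. First I would establish that every solution is global and satisfies $|x_i(t)|\le M^{0}$ for all $t\ge 0$ and all $i$, with $M^{0}$ as in \eqref{m0}. On a good interval $(t_{2p},t_{2p+1})$ the right-hand side of \eqref{onoff} makes each $x_i$ drift towards a convex combination of the remaining agents, so for every unit vector $e$ the map $t\mapsto\max_i\langle x_i(t),e\rangle$ is nonincreasing and $t\mapsto\min_i\langle x_i(t),e\rangle$ nondecreasing; hence $t\mapsto\max_i|x_i(t)|$ is nonincreasing there. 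On a bad interval $[t_{2p+1},t_{2p+2}]$ the sign is reversed, but an elementary Grönwall argument based on $\psi\le K$ controls the growth of $\max_i|x_i(t)|$, and since \eqref{sum1} forces $\sum_{p}(t_{2p+2}-t_{2p+1})<\infty$ the product of these growth factors over all bad intervals is finite, which gives the uniform bound $M^{0}$ of \eqref{m0}. Consequently, by \eqref{psi0}, $\psi_{0}\le\psi(x_i(t),x_j(t))\le K$ for all $t\ge 0$.

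Next, fixing a unit vector $e$ and setting $X(t)=\max_i\langle x_i(t),e\rangle$, $\underline x(t)=\min_i\langle x_i(t),e\rangle$, I would track $D(t):=X(t)-\underline x(t)$ through the differential inequalities satisfied by $X$ and $\underline x$ at the extremal indices. On a good interval, using the two-sided bound $\psi_{0}\le\psi\le K$ and integrating over $[t_{2p},t_{2p+1}]$, one gets $D(t_{2p+1})\le\gamma_{p}\,D(t_{2p})$ with $\gamma_{p}:=\max\{1-e^{-K(t_{2p+1}-t_{2p})},\,1-\tfrac{\psi_{0}}{K}(1-e^{-K(t_{2p+1}-t_{2p})})\}$; on a bad interval the analogous reversed computation, together with the constraint \eqref{tn} (which keeps the associated scalar comparison solution finite and positive), gives $D(t_{2p+2})\le\delta_{p}\,D(t_{2p+1})$ with $\delta_{p}:=\frac{e^{K(t_{2p+2}-t_{2p+1})}}{2-e^{K(t_{2p+2}-t_{2p+1})}}$. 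Since these bounds hold uniformly in $e$, the same two inequalities hold with $D$ replaced by the diameter $d$; I expect these to be the preliminary results announced for Section 3.

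Composing the two estimates over each full cycle $[t_{2p},t_{2p+2}]$ and iterating from $t_{0}=0$ I would obtain
\[
d(t_{2n+2})\ \le\ d(0)\,\prod_{p=0}^{n}\gamma_{p}\delta_{p}
\ =\ d(0)\,\prod_{p=0}^{n}\frac{e^{K(t_{2p+2}-t_{2p+1})}}{2-e^{K(t_{2p+2}-t_{2p+1})}}\,
\max\Big\{1-e^{-K(t_{2p+1}-t_{2p})},\,1-\frac{\psi_{0}}{K}\big(1-e^{-K(t_{2p+1}-t_{2p})}\big)\Big\}.
\]
Taking logarithms, the contribution of the $\delta_{p}$ converges by \eqref{sum1}, while the contribution of the $\gamma_{p}$ diverges to $-\infty$ by \eqref{sum2}, so $d(t_{2n})\to 0$ as $n\to\infty$; for an arbitrary $t\ge 0$ one picks $n$ with $t\in[t_{2n},t_{2n+2}]$ and notes that on the good part of that cycle $d$ is nonincreasing while on the bad part it grows by at most $e^{2K(t_{2n+2}-t_{2n+1})}<4$ (by \eqref{tn}), so $d(t)\le 4\,d(t_{2n})\to 0$, which is the asserted consensus. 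The only genuinely delicate point is the good-interval contraction above: the crude estimate obtained from a single extremal index degrades with the number $N$ of agents, so to get the $N$-independent factor $\gamma_{p}$ one must use the full averaging structure of \eqref{onoff}, comparing the motion of the extremal coordinates with an explicitly solvable scalar ODE whose coefficients involve both $\psi_{0}$ and $K$; the a priori bound, the bad-interval estimate, and the iteration are by comparison routine.
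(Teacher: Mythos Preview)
Your proposal is correct and follows essentially the paper's route: the a~priori bound $|x_i(t)|\le M^{0}$ is Proposition~\ref{14nov}, your $\gamma_p$ is exactly Proposition~\ref{L5}, your $\delta_p$ is the lemma yielding \eqref{boundgrowth}, and the iteration under \eqref{sum1}--\eqref{sum2} is the proof in Section~\ref{consensus_sec}. The one refinement to be aware of is that both the sharp growth factor $e^{K(t_{2p+2}-t_{2p+1})}$ for $\max_i|x_i|$ on bad intervals (hence the precise $M^{0}$ in \eqref{m0}) and the factor $\delta_p$ rest on a backward-in-time sandwich (Lemma~\ref{L1-1}: on $[t_{2n+1},t_{2n+2}]$ every projection lies between its extremal values at $t_{2n+2}$), not a plain forward Gr\"onwall, which would only give $e^{2K(\cdot)}$ and hence a larger $M^{0}$ and smaller $\psi_0$ than the hypotheses fix.
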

	\begin{oss}
		Let us note that condition \eqref{sum1} implies that \begin{equation}\label{bad}
			\sum_{p=0}^{+\infty}(t_{2p+2}-t_{2p+1})<+\infty,
		\end{equation}
		so that the quantity $M^{0}$ in \eqref{m0} is finite and it makes sense to consider the minimum given by \eqref{psi0}. Indeed, being $e^{K(t_{2p+2}-t_{2p+1})}>1$, it turns out that $$e^{K(t_{2p+2}-t_{2p+1})}\leq \frac{e^{K(t_{2p+2}-t_{2p+1})}}{2-e^{K(t_{2p+2}-t_{2p+1})}},$$
		from which $$K\sum_{p=0}^{+\infty}(t_{2p+2}-t_{2p+1})\leq \sum_{p=0}^{+\infty}\ln\left(\frac{e^{K(t_{2p+2}-t_{2p+1})}}{2-e^{K(t_{2p+2}-t_{2p+1})}}\right).$$
		Then, from \eqref{sum1}, the condition \eqref{bad} is satisfied. As a consequence, $t_{2p+2}-t_{2p+1}\to 0$, as $p\to \infty$. 
		
	\end{oss}
\begin{oss}\label{boundpsi}
Assume that $$t_{2n+1}-t_{2n}>\frac{1}{K}\ln\left(1+\frac{K}{\psi_{0}}\right),\quad \forall n\in \mathbb{N}_{0}.$$
Note that $\ln\left(1+\frac{K}{\psi_{0}}\right)>\ln2$, so that $$t_{2p+2}-t_{2p+1}<t_{2q+1}-t_{2q},\quad \forall p,q\in \mathbb{N}_0.$$ 
Then, in this situation the condition \eqref{sum2} can be simplified since $$\max\left\{1-e^{-K(t_{2p+1}-t_{2p})},1-\frac{\psi_{0}}{K}(1-e^{-K(t_{2p+1}-t_{2p})})\right\}=1-e^{-K(t_{2p+1}-t_{2p})}.$$
So, \eqref{sum2} becomes
	\begin{equation}\label{aut}
		\sum_{p=0}^{+\infty}\ln\left(1-e^{-K(t_{2p+1}-t_{2p})}\right)=-\infty.
	\end{equation} 
However, the above condition \eqref{aut} is automatically satisfied since we can assume that $$t_{2n+2}-t_{2n+1}\leq T,\quad \forall n \in \mathbb{N}_{0},$$
for some $T>0$, eventually splitting the intervals of positive interaction into subintervals of length at most $T$.
\end{oss}
	\section{Preliminaries}
	\setcounter{equation}{0}
	
	\noindent Let $\{x_{i}\}_{i=1,\dots,N}$ be solution to \eqref{onoff} under the initial conditions \eqref{incond}. In this section, we present some preliminary lemmas.  We assume that the hypotheses of Theorem \ref{cons} are satisfied. We first give some results that are related to the behaviour of the solution $\{x_{i}\}_{i=1,\dots,N}$ in the intervals having a positive interaction.
	\begin{lem}\label{L1}
		For each $v\in \RR^{d}$ and $n\in \mathbb{N}_0,$  we have that 
		\begin{equation}\label{scalpr}
			\min_{j=1,\dots,N}\langle x_{j}(t_{2n}),v\rangle\leq \langle x_{i}(t),v\rangle\leq \max_{j=1,\dots,N}\langle x_{j}(t_{2n}),v\rangle,
		\end{equation}for all  $t\in [t_{2n},t_{2n+1}]$ and $i=1,\dots,N$. 
	\end{lem}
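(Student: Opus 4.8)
The plan is to fix $v\in\RR^{d}$ and reduce \eqref{scalpr} to a scalar comparison argument on the \emph{attractive} interval $[t_{2n},t_{2n+1}]$. On this interval $\alpha\equiv 1$, and the communication rates $a_{ij}(t)=\frac{1}{N-1}\psi(x_{i}(t),x_{j}(t))$ are nonnegative (in fact positive) since $\psi>0$ by hypothesis. Setting $\varphi_{i}(t):=\langle x_{i}(t),v\rangle$ and taking the scalar product of \eqref{onoff} with $v$, I get, for $t\in(t_{2n},t_{2n+1})$,
\begin{equation*}
\varphi_{i}'(t)=\sum_{j\neq i}a_{ij}(t)\bigl(\varphi_{j}(t)-\varphi_{i}(t)\bigr),\qquad i=1,\dots,N,
\end{equation*}
with the $\varphi_{i}$ continuous on $[t_{2n},t_{2n+1}]$ and $C^{1}$ on the open interval. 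Thus it suffices to prove that each $\varphi_{i}(t)$ stays in $\bigl[\min_{j}\varphi_{j}(t_{2n}),\ \max_{j}\varphi_{j}(t_{2n})\bigr]$ for all $t\in[t_{2n},t_{2n+1}]$.

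To this end I would introduce the locally Lipschitz functions $M(t):=\max_{i=1,\dots,N}\varphi_{i}(t)$ and $m(t):=\min_{i=1,\dots,N}\varphi_{i}(t)$ on $[t_{2n},t_{2n+1}]$, each being the pointwise maximum (resp.\ minimum) of finitely many $C^{1}$ functions. The crucial step is the one-sided differential inequality: for every $t\in(t_{2n},t_{2n+1})$ one has $D^{+}M(t)\le\max\{\varphi_{i}'(t):\varphi_{i}(t)=M(t)\}$, and for any index $k$ with $\varphi_{k}(t)=M(t)$ one has $\varphi_{j}(t)-\varphi_{k}(t)\le 0$ for all $j$ while $a_{kj}(t)\ge 0$, so $\varphi_{k}'(t)\le 0$; hence $D^{+}M(t)\le 0$. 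A locally Lipschitz function with nonpositive upper Dini derivative is non-increasing, so $M$ is non-increasing on $[t_{2n},t_{2n+1}]$. Applying the same reasoning to $-m(t)=\max_{i}(-\varphi_{i}(t))$, and observing that the $-\varphi_{i}$ solve a system of exactly the same form, shows that $-m$ is non-increasing, i.e.\ $m$ is non-decreasing on $[t_{2n},t_{2n+1}]$. Combining these two monotonicities gives, for all $t\in[t_{2n},t_{2n+1}]$ and all $i$,
\begin{equation*}
\min_{j=1,\dots,N}\langle x_{j}(t_{2n}),v\rangle=m(t_{2n})\le m(t)\le\varphi_{i}(t)\le M(t)\le M(t_{2n})=\max_{j=1,\dots,N}\langle x_{j}(t_{2n}),v\rangle,
\end{equation*}
which is precisely \eqref{scalpr}.

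The only genuinely delicate point is the passage from the pointwise sign of the Dini derivative to the monotonicity of $M$ and $m$; this is classical, but if one wishes to avoid Dini derivatives altogether, the same conclusion follows from a direct contradiction argument: for $\varepsilon>0$ set $g_{i}(t):=\varphi_{i}(t)-\max_{j}\varphi_{j}(t_{2n})-\varepsilon(t-t_{2n})$, note that $g_{i}(t_{2n})<0$, and verify that at a hypothetical first time $\tau\in(t_{2n},t_{2n+1}]$ with $\max_{i}g_{i}(\tau)=0$ one would have $g_{k}'(\tau)<0$ for a maximizing index $k$, a contradiction; letting $\varepsilon\to0^{+}$ yields the upper bound in \eqref{scalpr}, and the symmetric argument (or the substitution $v\mapsto -v$) yields the lower bound. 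Either route is routine; the substance of the lemma is simply the nonnegativity of the communication rates on the attractive intervals, which forces the $\varphi_{i}$ to remain inside the order interval determined by their values at $t_{2n}$.
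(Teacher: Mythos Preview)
Your primary argument via the Dini derivative of $M(t)=\max_{i}\varphi_{i}(t)$ is correct and in fact more direct than the paper's. The paper proceeds by an $\varepsilon$-barrier method: it fixes $\varepsilon>0$, lets $S^{\varepsilon}$ be the supremum of times on $[t_{2n},t_{2n+1}]$ for which $\max_{i}\varphi_{i}<M_{t_{2n}}+\varepsilon$, and derives a contradiction from the assumption $S^{\varepsilon}<t_{2n+1}$ by applying Gronwall's inequality to the differential inequality $\varphi_{i}'\le K(M_{t_{2n}}+\varepsilon-\varphi_{i})$, which uses the bound $K=\|\psi\|_{\infty}$. Your route avoids both the auxiliary $\varepsilon$ and the constant $K$: you show directly that $D^{+}M(t)\le 0$ because every maximizing index $k$ satisfies $\varphi_{k}'(t)=\sum_{j\neq k}a_{kj}(t)(\varphi_{j}(t)-\varphi_{k}(t))\le 0$, whence $M$ is nonincreasing. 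This is cleaner for the present lemma; the paper's Gronwall template, on the other hand, is exactly what is reused for the quantitative contraction estimates later (Proposition~\ref{L4} and Proposition~\ref{L5}), so the heavier machinery is not wasted in context. Both proofs obtain the lower bound by the substitution $v\mapsto -v$.

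One small slip in your alternative contradiction route: with $g_{i}(t)=\varphi_{i}(t)-M_{t_{2n}}-\varepsilon(t-t_{2n})$ you only have $g_{i}(t_{2n})\le 0$, not $g_{i}(t_{2n})<0$, since equality holds for any index realizing $M_{t_{2n}}$; thus ``the first time $\max_{i}g_{i}=0$'' is $t_{2n}$ itself. The fix is immediate---replace the perturbation by $\varepsilon(1+t-t_{2n})$, or define $\tau$ as the infimum of times where $\max_{i}g_{i}>0$---and since your main Dini-derivative argument already stands on its own, the lemma is established.
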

	\begin{proof}
		Let $n\in \mathbb{N}_{0}$. Given a vector $v\in \RR^{d}$, we set $$M_{t_{2n}}=\max_{j=1,\dots,N}\langle x_{j}(t_{2n}),v\rangle.$$
	For all $\epsilon >0$, let us define
		$$K^{\epsilon}:=\left\{t\in [t_{2n},t_{2n+1}] :\max_{i=1,\dots,N}\langle x_{i}(s),v\rangle < M_{t_{2n}}+\epsilon,\,\forall s\in [t_{2n},t)\right\}.$$
		Denoted with $$S^{\epsilon}:=\sup K^{\epsilon},$$
		by continuity it holds that $S^{\epsilon}\in (t_{2n},t_{2n+1}]$. \\We claim that $S^{\epsilon}=t_{2n+1}$. Indeed, suppose by contradiction that $S^{\epsilon}<t_{2n+1}$. Note that by definition of $S^{\epsilon}$ it turns out that \begin{equation}\label{max}
			\max_{i=1,\dots,N}\langle x_{i}(t),v\rangle<M_{t_{2n}}+\epsilon,\quad \forall t\in (t_{2n},S^{\epsilon}),
		\end{equation}
		and \begin{equation}\label{teps}
			\lim_{t\to S^{\epsilon-}}\max_{i=1,\dots,N}\langle x_{i}(t),v\rangle=M_{t_{2n}}+\epsilon.
		\end{equation}
		For all $i=1,\dots,N$ and $t\in (t_{2n},S^{\epsilon})$, we compute
		$$\frac{d}{dt}\langle x_{i}(t),v\rangle=\frac{1}{N-1}\sum_{j:j\neq i}\alpha(t)\psi(x_{i}(t), x_{j}(t))\langle x_{j}(t)-x_{i}(t),v\rangle.$$
		Notice that, being $t\in (t_{2n},S^{\epsilon})$, then  $t\in (t_{2n}, t_{2n+1})$ and, as a consequence, it holds that $\alpha(t)=1.$ Thus, using \eqref{K} and \eqref{max}, we can write $$\frac{d}{dt}\langle x_{i}(t),v\rangle\leq \frac{1}{N-1}\sum_{j:j\neq i}\psi(x_{i}(t), x_{j}(t))(M_{t_{2n}}+\epsilon-\langle x_{i}(t),v\rangle)$$$$\leq K(M_{t_{2n}}+\epsilon-\langle x_{i}(t),v\rangle).$$
		Then, from Gronwall's inequality, we get
		$$\begin{array}{l}
			\vspace{0.2cm}\displaystyle{
				\langle x_{i}(t),v\rangle\leq e^{-K(t-t_{2n})}\langle x_{i}(t_{2n}),v\rangle+K(M_{t_{2n}}+\epsilon)\int_{t_{2n}}^{t}e^{-K(t-s)}ds}\\
			\vspace{0.3cm}\displaystyle{\hspace{1.7 cm}
				=e^{-K(t-t_{2n})}\langle x_{i}(t_{2n}),v\rangle+(M_{t_{2n}}+\epsilon)e^{-Kt}(e^{Kt}-e^{Kt_{2n}})}\\
			\vspace{0.3cm}\displaystyle{\hspace{1.7 cm}
				=e^{-K(t-t_{2n})}\langle x_{i}(t_{2n}),v\rangle+(M_{t_{2n}}+\epsilon)(1-e^{-K(t-t_{2n})})}\\
			\vspace{0.3cm}\displaystyle{\hspace{1.7 cm}
				\leq e^{-K(t-t_{2n})}M_{t_{2n}}+M_{t_{2n}}+\epsilon -M_{t_{2n}}e^{-K(t-t_{2n})}-\epsilon e^{-K(t-t_{2n})}}\\
			\vspace{0.3cm}\displaystyle{\hspace{1.7 cm}
				=M_{t_{2n}}+\epsilon-\epsilon e^{-K(t-t_{2n})}}\\
			\displaystyle{\hspace{1.7 cm}
				=M_{t_{2n}}+\epsilon-\epsilon e^{-K(S^{\epsilon}-t_{2n})},}
		\end{array}
		$$for all $t\in (t_{2n}, S^{\epsilon})$.	We have so proved that, $\forall i=1,\dots, N,$
		$$\langle x_{i}(t),v\rangle\leq M_{t_{2n}}+\epsilon-\epsilon e^{-K(S^{\epsilon}-t_{2n})}, \quad \forall t\in (t_{2n},S^{\epsilon}).$$
		Thus, we get
		\begin{equation}\label{lim}
			\max_{i=1,\dots,N} \langle x_{i}(t),v\rangle\leq M_{t_{2n}}+\epsilon-\epsilon e^{-K(S^{\epsilon}-t_{2n})}, \quad \forall t\in (t_{2n},S^{\epsilon}).
		\end{equation}
		Letting $t\to S^{\epsilon-}$ in \eqref{lim}, from \eqref{teps} we have that $$M_{t_{2n}}+\epsilon\leq M_{t_{2n}}+\epsilon-\epsilon e^{-K(S^{\epsilon}-t_{2n})}<M_{t_{2n}}+\epsilon,$$
		which is a contradiction. Thus, $S^{\epsilon}=t_{2n+1}$. As a consequence, we have that $$\max_{i=1,\dots,N}\langle x_{i}(t),v\rangle<M_{t_{2n}}+\epsilon, \quad \forall t\in (t_{2n},t_{2n+1}).$$
		From the arbitrariness of $\epsilon$ we can conclude that $$\max_{i=1,\dots,N}\langle x_{i}(t),v\rangle\leq M_{t_{2n}}, \quad \forall t\in [t_{2n},t_{2n+1}],$$
		from which $$\langle x_{i}(t),v\rangle\leq M_{t_{2n}}, \quad \forall t\in [t_{2n},t_{2n+1}], \,\forall i=1,\dots,N,$$
		which proves the second inequality in \eqref{scalpr}.
		\\Now, to prove the other inequality, let $v\in \RR^{d}$ and define $$m_{t_{2n}}=\min_{j=1,\dots,N}\langle x_{j}(t_{2n}),v\rangle.$$
		Then, for all $i=1,\dots,N$ and $t>t_{2n}$, by applying the second inequality in \eqref{scalpr} to the vector $-v\in\RR^{d}$ we get $$-\langle x_{i}(t),v\rangle=\langle x_{i}(t),-v\rangle\leq \max_{j=1,\dots,N}\langle x_{j}(t_{2n}),-v\rangle$$$$=-\min_{j=1,\dots,N}\langle x_{j}(t_{2n}),v\rangle=-m_{t_{2n}},$$
		from which $$\langle x_{j}(s),v\rangle\geq m_{t_{2n}}.$$
		Thus, also the first inequality in \eqref{scalpr} is fulfilled.
	\end{proof}
 \begin{lem}\label{L2}
		For each $n\in \mathbb{N}_{0}$ and $i,j=1,\dots,N$, we get \begin{equation}
			\label{dist}
			\lvert x_{i}(s)-x_{j}(t)\rvert\leq d(t_{2n}), \quad\forall s,t\in [t_{2n},t_{2n+1}].
		\end{equation} 
	\end{lem}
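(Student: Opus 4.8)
The plan is to deduce this directly from Lemma \ref{L1} by a standard ``support function'' argument. Fix $n\in\mathbb{N}_0$, indices $i,j\in\{1,\dots,N\}$, and times $s,t\in[t_{2n},t_{2n+1}]$. If $x_i(s)=x_j(t)$ the claimed inequality is trivial, so assume $x_i(s)\neq x_j(t)$ and set
\[
v:=\frac{x_i(s)-x_j(t)}{\lvert x_i(s)-x_j(t)\rvert}\in\RR^d,\qquad \lvert v\rvert=1 .
\]

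Next I would expand $\lvert x_i(s)-x_j(t)\rvert=\langle x_i(s)-x_j(t),v\rangle=\langle x_i(s),v\rangle-\langle x_j(t),v\rangle$ and apply Lemma \ref{L1} to this particular $v$: since $s,t\in[t_{2n},t_{2n+1}]$, inequality \eqref{scalpr} gives $\langle x_i(s),v\rangle\leq \max_{k}\langle x_k(t_{2n}),v\rangle$ and $\langle x_j(t),v\rangle\geq \min_{k}\langle x_k(t_{2n}),v\rangle$. Combining,
\[
\lvert x_i(s)-x_j(t)\rvert\leq \max_{k=1,\dots,N}\langle x_k(t_{2n}),v\rangle-\min_{k=1,\dots,N}\langle x_k(t_{2n}),v\rangle .
\]

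Finally, letting $k_1$ and $k_2$ be indices attaining the maximum and the minimum above, the right-hand side equals $\langle x_{k_1}(t_{2n})-x_{k_2}(t_{2n}),v\rangle$, which by the Cauchy--Schwarz inequality and $\lvert v\rvert=1$ is at most $\lvert x_{k_1}(t_{2n})-x_{k_2}(t_{2n})\rvert\leq d(t_{2n})$. This yields \eqref{dist}. There is no real obstacle here: the only point requiring a little care is the choice of the unit vector $v$ aligned with $x_i(s)-x_j(t)$, which converts the coordinatewise control of Lemma \ref{L1} into a bound on the Euclidean distance; everything else is the triangle/Cauchy--Schwarz estimate and the definition of $d(\cdot)$.
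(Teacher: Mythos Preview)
Your proof is correct and is essentially identical to the paper's own argument: both pick the unit vector $v=(x_i(s)-x_j(t))/\lvert x_i(s)-x_j(t)\rvert$, apply the two inequalities in \eqref{scalpr} from Lemma~\ref{L1}, and finish with Cauchy--Schwarz and the definition of $d(t_{2n})$. The only cosmetic difference is that the paper writes the final step as $\max_{l,k}\langle x_l(t_{2n})-x_k(t_{2n}),v\rangle$ rather than fixing explicit maximizing/minimizing indices $k_1,k_2$.
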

	\begin{proof}
		Fix $n\in\mathbb{N}_{0}$ and $i,j=1,\dots,N$. Given $s,t\in [t_{2n},t_{2n+1}]$, if $\lvert x_{i}(s)-x_{j}(t)\rvert=0$ then of course $d(t_{2n})\geq 0=\lvert x_{i}(s)-x_{j}(t)\rvert$. Thus, we can assume $\lvert x_{i}(s)-x_{j}(t)\rvert>0$ and we set $$v=\frac{x_{i}(s)-x_{j}(t)}{\lvert x_{i}(s)-x_{j}(t)\rvert}.$$
		It turns out that $v$ is a unit vector and, using \eqref{scalpr}, we can write 
		$$\begin{array}{l}
			\vspace{0.3cm}\displaystyle{\lvert x_{i}(s)-x_{j}(t)\rvert=\langle x_{i}(s)-x_{j}(t),v\rangle=\langle x_{i}(s),v\rangle-\langle x_{j}(t),v\rangle}\\
			\vspace{0.3cm}\displaystyle{\leq \max_{l=1,\dots,N}\langle x_{l}(t_{2n}),v\rangle-\min_{l=1,\dots,N}\langle x_{l}(t_{2n}),v\rangle}\\
			\vspace{0.3cm}\displaystyle{\leq \max_{l,k=1,\dots,N}\langle x_{l}(t_{2n})-x_{k}(t_{2n}),v\rangle}	\\
			\displaystyle{\leq \max_{l,k=1,\dots,N}\lvert x_{l}(t_{2n})-x_{k}(t_{2n})\rvert\lvert v\rvert=d(t_{2n}),}
		\end{array}$$
		which proves \eqref{dist}.
	\end{proof}
	\begin{oss}
		Let us note that from \eqref{dist}, in particular, it follows that
		\begin{equation}\label{dec}
			d(t_{2n+1})\leq d(t_{2n}),\quad \forall n\in \mathbb{N}_{0}.
		\end{equation}
		Indeed, let $i,j=1,\dots,N$ be such that $$d(t_{2n+1})=\lvert x_{i}(t_{2n+1})-x_{j}(t_{2n+1})\rvert.$$
		Then, if $d(t_{2n+1})=0$, of course we have $d(t_{2n+1})=0\leq d(t_{2n})$. So we can assume $d(t_{2n+1})>0$.
		Then, we can apply \eqref{dist} with $s,t=t_{2n+1}$ and we get $$d(t_{2n+1})=\lvert x_{i}(t_{2n+1})-x_{j}(t_{2n+1})\rvert\leq d(t_{2n}).$$
	\end{oss} 
	Now, we deal with the intervals in which the agents repeal each other. 	\begin{lem}\label{L1-1}
		For each $v\in \RR^{d}$ and $n\in \mathbb{N}_0,$  we have that 
		\begin{equation}\label{scalpr-1}
			\min_{j=1,\dots,N}\langle x_{j}(t_{2n+2}),v\rangle\leq \langle x_{i}(t),v\rangle\leq \max_{j=1,\dots,N}\langle x_{j}(t_{2n+2}),v\rangle,
		\end{equation}for all  $t\in [t_{2n+1},t_{2n+2}]$ and $i=1,\dots,N$. 
	\end{lem}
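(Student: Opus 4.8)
The plan is to reduce the \emph{repulsive} interval $[t_{2n+1},t_{2n+2}]$ to an \emph{attractive} one by reversing time, and then to invoke the estimate already established in Lemma \ref{L1}. Fix $n\in\mathbb{N}_{0}$ and define, for $t\in[t_{2n+1},t_{2n+2}]$,
$$z_{i}(t):=x_{i}(t_{2n+1}+t_{2n+2}-t),\qquad i=1,\dots,N.$$
The map $t\mapsto t_{2n+1}+t_{2n+2}-t$ sends $(t_{2n+1},t_{2n+2})$ onto itself, and on this interval $\alpha\equiv-1$ by \eqref{alpha1}; hence, differentiating and using \eqref{onoff}--\eqref{weight}, for every $i$ and every $t\in(t_{2n+1},t_{2n+2})$,
$$\frac{d}{dt}z_{i}(t)=-\frac{d}{dt}x_{i}\bigl(t_{2n+1}+t_{2n+2}-t\bigr)=\frac{1}{N-1}\sum_{j:j\neq i}\psi\bigl(z_{i}(t),z_{j}(t)\bigr)\bigl(z_{j}(t)-z_{i}(t)\bigr).$$
Thus $\{z_{i}\}_{i=1,\dots,N}$ solves, on $[t_{2n+1},t_{2n+2}]$, a Hegselmann--Krause system of the form \eqref{onoff} with weight function identically equal to $1$ and with nonnegative communication rates bounded by $K$ after summation, by \eqref{K}.

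Next I would repeat verbatim the argument of Lemma \ref{L1} (the $\epsilon$--supremum device together with the Gronwall estimate), which uses nothing about $\alpha$ beyond its being equal to $1$ on the interval under consideration and the bound $\psi\leq K$: applied to $\{z_{i}\}$ on $[t_{2n+1},t_{2n+2}]$ it yields, for every $v\in\RR^{d}$,
$$\min_{j=1,\dots,N}\langle z_{j}(t_{2n+1}),v\rangle\leq\langle z_{i}(t),v\rangle\leq\max_{j=1,\dots,N}\langle z_{j}(t_{2n+1}),v\rangle,\qquad t\in[t_{2n+1},t_{2n+2}],\ i=1,\dots,N.$$
It then remains only to translate this back: by construction $z_{j}(t_{2n+1})=x_{j}(t_{2n+2})$, while as $t$ runs over $[t_{2n+1},t_{2n+2}]$ the point $z_{i}(t)$ runs over $\{x_{i}(s):s\in[t_{2n+1},t_{2n+2}]\}$. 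Hence the displayed chain of inequalities is precisely \eqref{scalpr-1}.

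I do not anticipate a real obstacle here. The only points requiring a little care are that a solution of \eqref{onoff} is $C^{1}$ on the open interval $(t_{2n+1},t_{2n+2})$, so that the chain rule used above is legitimate, and that the proof of Lemma \ref{L1} nowhere exploits the specific structure of $\alpha$ apart from its value on the good interval, so that it transfers unchanged to the reversed system satisfied by $\{z_{i}\}$. If one prefers to avoid invoking this transfer, one can instead argue directly in the original time variable, running the same Gronwall estimate for $\frac{d}{dt}\langle x_{i}(t),\pm v\rangle$ \emph{backwards} from $t=t_{2n+2}$: there $\alpha=-1$ reverses the sign of the drift in exactly the favourable direction, and the contradiction argument of Lemma \ref{L1} reproduces \eqref{scalpr-1} line by line.
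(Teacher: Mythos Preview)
Your argument is correct. The time-reversal substitution $z_i(t)=x_i(t_{2n+1}+t_{2n+2}-t)$ cleanly converts the repulsive dynamics on $(t_{2n+1},t_{2n+2})$ into attractive dynamics with the same influence function $\psi$, and then Lemma~\ref{L1} applies verbatim with initial time $t_{2n+1}$; since $z_j(t_{2n+1})=x_j(t_{2n+2})$, the conclusion translates exactly to \eqref{scalpr-1}.

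The paper does not make this substitution explicit. Instead it argues directly in the original variables, essentially running the proof of Lemma~\ref{L1} \emph{backwards}: it fixes $M_{t_{2n+2}}=\max_j\langle x_j(t_{2n+2}),v\rangle$, defines for each $\epsilon>0$ the set of times $t$ for which $\max_i\langle x_i(s),v\rangle<M_{t_{2n+2}}+\epsilon$ holds for all $s\in(t,t_{2n+2}]$, takes $S^\epsilon$ to be the \emph{infimum} of this set, and derives a contradiction from $S^\epsilon>t_{2n+1}$ via a forward Gronwall estimate using $\alpha=-1$. This is precisely the alternative you sketch in your final paragraph. Your time-reversal route is more economical in that it reuses Lemma~\ref{L1} as a black box rather than reproducing its mechanics; the paper's direct argument is a bit longer but entirely self-contained. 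The two are interchangeable here because the system is autonomous in the state variables (the only time dependence is through $\alpha$), so reversing time on a repulsive interval is exactly equivalent to flipping the sign of $\alpha$.
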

	\begin{proof}
		Let $n\in \mathbb{N}_{0}$. Given a vector $v\in \RR^{d}$, we set $$M_{t_{2n+2}}=\max_{j=1,\dots,N}\langle x_{j}(t_{2n+2}),v\rangle.$$
		For all $\epsilon >0$, let us define
		$$K^{\epsilon}:=\left\{t\in [t_{2n+1},t_{2n+2}] :\max_{i=1,\dots,N}\langle x_{i}(s),v\rangle < M_{t_{2n+2}}+\epsilon,\,\forall s\in (t,t_{2n+2}]\right\}.$$
		Denoted with $$S^{\epsilon}:=\inf K^{\epsilon},$$
		by continuity it holds that $S^{\epsilon}\in [t_{2n+1},t_{2n+2})$. \\We claim that $S^{\epsilon}=t_{2n+1}$. Indeed, suppose by contradiction that $S^{\epsilon}>t_{2n+1}$. Note that by definition of $S^{\epsilon}$, it turns out that \begin{equation}\label{max-1}
			\max_{i=1,\dots,N}\langle x_{i}(t),v\rangle<M_{t_{2n+2}}+\epsilon,\quad \forall t\in (S^{\epsilon},t_{2n+2}),
		\end{equation}
		and \begin{equation}\label{teps-1}
			\lim_{t\to S^{\epsilon+}}\max_{i=1,\dots,N}\langle x_{i}(t),v\rangle=M_{t_{2n+2}}+\epsilon.
		\end{equation}
		For all $i=1,\dots,N$ and $t\in (S^{\epsilon},t_{2n+2})$, we compute
		$$\frac{d}{dt}\langle x_{i}(t),v\rangle=\frac{1}{N-1}\sum_{j:j\neq i}\alpha(t)\psi(x_{i}(t), x_{j}(t))\langle x_{j}(t)-x_{i}(t),v\rangle.$$
		Notice that, being $t\in (S^{\epsilon},t_{2n+2})$, then  $t\in (t_{2n+1}, t_{2n+2})$ and, as a consequence, it holds that $\alpha(t)=-1.$ Thus, using \eqref{K} and \eqref{max-1}, we can write $$\frac{d}{dt}\langle x_{i}(t),v\rangle\geq -\frac{1}{N-1}\sum_{j:j\neq i}\psi(x_{i}(t), x_{j}(t))(M_{t_{2n+2}}+\epsilon-\langle x_{i}(t),v\rangle)$$$$\geq -K(M_{t_{2n+2}}+\epsilon-\langle x_{i}(t),v\rangle).$$
		Then, from Gronwall's inequality, we get
		$$\begin{array}{l}
			\vspace{0.2cm}\displaystyle{
				\langle x_{i}(t),v\rangle\geq e^{K(t-S^{\epsilon})}\langle x_{i}(S^{\epsilon}),v\rangle-K(M_{t_{2n+2}}+\epsilon)\int_{S^{\epsilon}}^{t}e^{K(t-s)}ds}\\
			\vspace{0.3cm}\displaystyle{\hspace{1.7 cm}
				=e^{K(t-S^{\epsilon})}\langle x_{i}(S^{\epsilon}),v\rangle+(M_{t_{2n+2}}+\epsilon)e^{Kt}(e^{-Kt}-e^{-KS^{\epsilon}})}\\
			\displaystyle{\hspace{1.7 cm}
				=e^{K(t-S^{\epsilon})}\langle x_{i}(S^{\epsilon}),v\rangle+(M_{t_{2n+2}}+\epsilon)(1-e^{K(t-S^{\epsilon})}),}\\
		\end{array}
		$$for all $t\in (S^{\epsilon}, t_{2n+2})$ and $i=1,\dots,N$. Taking the supremum for $i=1,\dots,N$, we get 
		$$\max_{i=1,\dots,N}\langle x_{i}(t),v\rangle\geq  e^{K(t-S^{\epsilon})}\max_{i=1,\dots,N}\langle x_{i}(S^{\epsilon}),v\rangle+(M_{t_{2n+2}}+\epsilon)(1-e^{K(t-S^{\epsilon})})$$$$=(M_{t_{2n+2}}+\epsilon)e^{K(t-S^{\epsilon})}+(M_{t_{2n+2}}+\epsilon)(1-e^{K(t-S^{\epsilon})})=M_{t_{2n+2}}+\epsilon.$$
		For $t= t_{2n+2}$, we have that $$M_{t_{2n+2}}=\max_{i=1,\dots,N}\langle x_{i}(t_{2n+2}),v\rangle \geq M_{t_{2n+2}}+\epsilon>M_{t_{2n+2}},$$
		which is a contradiction. Thus, $S^{\epsilon}=t_{2n+1}$. As a consequence, we have that $$\max_{i=1,\dots,N}\langle x_{i}(t),v\rangle<M_{t_{2n+2}}+\epsilon, \quad \forall t\in (t_{2n+1},t_{2n+2}).$$
		From the arbitrariness of $\epsilon$ we can conclude that $$\max_{i=1,\dots,N}\langle x_{i}(t),v\rangle\leq M_{t_{2n+2}}, \quad \forall t\in [t_{2n+1},t_{2n+2}],$$
		from which $$\langle x_{i}(t),v\rangle\leq M_{t_{2n+2}}, \quad \forall t\in [t_{2n+1},t_{2n+2}], \,\forall i=1,\dots,N.$$
		Thus, the second inequality in \eqref{scalpr-1} holds for all $t\in [t_{2n+1},t_{2n+2}]$.
		\\Now, to prove the other inequality, let $v\in \RR^{d}$ and define $$m_{t_{2n+2}}=\min_{j=1,\dots,N}\langle x_{j}(t_{2n+2}),v\rangle.$$
		Then, for all $i=1,\dots,N$ and $t\in[t_{2n+1},t_{2n+2}]$, by applying the second inequality in \eqref{scalpr-1} to the vector $-v\in\RR^{d}$ we get $$-\langle x_{i}(t),v\rangle=\langle x_{i}(t),-v\rangle\leq \max_{j=1,\dots,N}\langle x_{j}(t_{2n+2}),-v\rangle$$$$=-\min_{j=1,\dots,N}\langle x_{j}(t_{2n+2}),v\rangle=-m_{t_{2n+2}},$$
		from which $$\langle x_{j}(s),v\rangle\geq m_{t_{2n+2}}.$$
		Thus, also the first inequality in \eqref{scalpr-1} is fulfilled.
	\end{proof}
	\begin{lem}\label{L2-1}
		For each $n\in \mathbb{N}_{0}$ and $i,j=1,\dots,N$, we get \begin{equation}
			\label{dist-1}
			\lvert x_{i}(s)-x_{j}(t)\rvert\leq d(t_{2n+2}), \quad\forall s,t\in [t_{2n+1},t_{2n+2}].
		\end{equation} 
	\end{lem}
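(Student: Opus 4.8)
The plan is to transcribe the proof of Lemma \ref{L2} almost verbatim, the only change being that Lemma \ref{L1} is replaced by Lemma \ref{L1-1}, so that the natural reference configuration becomes the right endpoint $t_{2n+2}$ of the \emph{bad} interval instead of its left endpoint. First I would fix $n\in\mathbb{N}_0$, indices $i,j\in\{1,\dots,N\}$, and times $s,t\in[t_{2n+1},t_{2n+2}]$. If $\lvert x_i(s)-x_j(t)\rvert=0$ the inequality \eqref{dist-1} is trivial since $d(t_{2n+2})\geq0$; otherwise I would set the unit vector $v:=(x_i(s)-x_j(t))/\lvert x_i(s)-x_j(t)\rvert$.

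Next, I would apply the two bounds of \eqref{scalpr-1}: the upper one to the point $x_i(s)$ and the lower one to the point $x_j(t)$, both being licit since $s,t\in[t_{2n+1},t_{2n+2}]$. This yields
$$\lvert x_i(s)-x_j(t)\rvert=\langle x_i(s)-x_j(t),v\rangle=\langle x_i(s),v\rangle-\langle x_j(t),v\rangle\leq\max_{l=1,\dots,N}\langle x_l(t_{2n+2}),v\rangle-\min_{l=1,\dots,N}\langle x_l(t_{2n+2}),v\rangle.$$
Rewriting the right-hand side as $\max_{l,k=1,\dots,N}\langle x_l(t_{2n+2})-x_k(t_{2n+2}),v\rangle$ and using the Cauchy--Schwarz inequality together with $\lvert v\rvert=1$ bounds it by $\max_{l,k=1,\dots,N}\lvert x_l(t_{2n+2})-x_k(t_{2n+2})\rvert=d(t_{2n+2})$, which is precisely \eqref{dist-1}.

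Because the computation is a line-by-line copy of the argument for Lemma \ref{L2}, I do not expect any real obstacle. The single conceptual point to keep straight is the direction of the estimate: on a repulsive interval the scalar products $\langle x_i(\cdot),v\rangle$ spread \emph{outward} and remain trapped between the endpoint extrema $\min_l\langle x_l(t_{2n+2}),v\rangle$ and $\max_l\langle x_l(t_{2n+2}),v\rangle$ (this is the content of Lemma \ref{L1-1}), so the correct anchor is the \emph{final} configuration $\{x_l(t_{2n+2})\}_l$ rather than the initial one $\{x_l(t_{2n+1})\}_l$. As with the remark following Lemma \ref{L2}, one could then note as a byproduct that $d(t_{2n+1})\leq d(t_{2n+2})$ on the repulsive intervals, i.e. the diameter may only grow there.
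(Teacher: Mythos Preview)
Your proof is correct and essentially identical to the paper's own argument: both fix the unit vector $v=(x_i(s)-x_j(t))/\lvert x_i(s)-x_j(t)\rvert$, apply the two-sided bound \eqref{scalpr-1} from Lemma~\ref{L1-1}, and then use Cauchy--Schwarz to conclude. Your remark that the anchor must be the final configuration $\{x_l(t_{2n+2})\}_l$ on repulsive intervals, and the ensuing monotonicity $d(t_{2n+1})\leq d(t_{2n+2})$, also matches the paper's subsequent observation exactly.
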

	\begin{proof}
		Fix $n\in\mathbb{N}_{0}$ and $i,j=1,\dots,N$. Given $s,t\in [t_{2n+1},t_{2n+2}]$, if $\lvert x_{i}(s)-x_{j}(t)\rvert=0$ then of course $d(t_{2n+2})\geq 0=\lvert x_{i}(s)-x_{j}(t)\rvert$. Thus, we can assume $\lvert x_{i}(s)-x_{j}(t)\rvert>0$ and we set $$v=\frac{x_{i}(s)-x_{j}(t)}{\lvert x_{i}(s)-x_{j}(t)\rvert}.$$
		It turns out that $v$ is a unit vector and, using \eqref{scalpr}, we can write 
		$$\begin{array}{l}
			\vspace{0.3cm}\displaystyle{\lvert x_{i}(s)-x_{j}(t)\rvert=\langle x_{i}(s)-x_{j}(t),v\rangle=\langle x_{i}(s),v\rangle-\langle x_{j}(t),v\rangle}\\
			\vspace{0.3cm}\displaystyle{\leq \max_{l=1,\dots,N}\langle x_{l}(t_{2n+2}),v\rangle-\min_{l=1,\dots,N}\langle x_{l}(t_{2n+2}),v\rangle}\\
			\vspace{0.3cm}\displaystyle{\leq \max_{l,k=1,\dots,N}\langle x_{l}(t_{2n+2})-x_{k}(t_{2n+2}),v\rangle}	\\
			\displaystyle{\leq \max_{l,k=1,\dots,N}\lvert x_{l}(t_{2n+2})-x_{k}(t_{2n+2})\rvert\lvert v\rvert=d(t_{2n+2}),}
		\end{array}$$
		which proves \eqref{dist-1}.
	\end{proof}
	\begin{oss}
		Let us note that from \eqref{dist}, in particular, it follows that
		\begin{equation}\label{cre}
			d(t_{2n+2})\geq d(t_{2n+1}),\quad \forall n\in \mathbb{N}_{0}.
		\end{equation}
		Indeed, let $i,j=1,\dots,N$ be such that $$d(t_{2n+1})=\lvert x_{i}(t_{2n+1})-x_{j}(t_{2n+1})\rvert.$$
		Then, if $d(t_{2n+1})=0$, of course we have $d(t_{2n+2})=0\leq d(t_{2n+1})$. So we can assume $d(t_{2n+1})>0$.
		Then, we can apply \eqref{dist-1} with $s,t=t_{2n+1}$ and we get $$d(t_{2n+1})=\lvert x_{i}(t_{2n+1})-x_{j}(t_{2n+1})\rvert\leq d(t_{2n+2}).$$
	\end{oss}
Now, with a quite simple argument, it can be proved that, in the intervals in which the particles attract each other, the solutions of the system under consideration have a bound that is uniform with respect to $i=1,\dots,N$, but that depends of the maximum value assumed by the opinions of the agents at the left end of the good interval. \begin{lem}\label{L3}
		For every $i=1,\dots,N,$ we have that \begin{equation}\label{boundsol}
			\lvert x_{i}(t)\rvert\leq M_{2n}^{0},\quad \forall t\in [t_{2n},t_{2n+1}],
		\end{equation}
		where $M_{2n}^{0}$ is that in \eqref{M0}.	
	\end{lem}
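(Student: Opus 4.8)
The plan is to deduce the bound directly from Lemma \ref{L1} by a supporting-hyperplane argument, exactly in the spirit of the proof of Lemma \ref{L2}. Fix $n\in\mathbb{N}_0$, an index $i\in\{1,\dots,N\}$, and a time $t\in[t_{2n},t_{2n+1}]$. If $x_i(t)=0$, then trivially $\lvert x_i(t)\rvert=0\leq M_{2n}^0$ since $M_{2n}^0\geq 0$ by its definition \eqref{M0}, so we may assume $x_i(t)\neq 0$.

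In that case I would set
$$
v=\frac{x_i(t)}{\lvert x_i(t)\rvert},
$$
which is a unit vector in $\RR^d$. Applying the right-hand inequality in \eqref{scalpr} of Lemma \ref{L1} to this particular $v$ gives
$$
\langle x_i(t),v\rangle\leq \max_{j=1,\dots,N}\langle x_j(t_{2n}),v\rangle .
$$
By the Cauchy--Schwarz inequality, for every $j$ we have $\langle x_j(t_{2n}),v\rangle\leq \lvert x_j(t_{2n})\rvert\,\lvert v\rvert=\lvert x_j(t_{2n})\rvert\leq M_{2n}^0$, using \eqref{M0}. Combining this with the identity $\langle x_i(t),v\rangle=\lvert x_i(t)\rvert$, which holds by the choice of $v$, yields $\lvert x_i(t)\rvert\leq M_{2n}^0$. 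Since $i$ and $t\in[t_{2n},t_{2n+1}]$ were arbitrary, this establishes \eqref{boundsol}.

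There is essentially no obstacle here: the only point requiring a word is the degenerate case $x_i(t)=0$ (handled above), and the fact that Lemma \ref{L1} is applicable precisely because $[t_{2n},t_{2n+1}]$ is an interval of positive interaction, where $\alpha\equiv 1$. The result is purely a consequence of the invariance of the smallest ball (more precisely, of every supporting half-space of $\{x_j(t_{2n})\}_j$) under the dynamics on good intervals, already encoded in Lemma \ref{L1}.
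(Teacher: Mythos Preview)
Your proof is correct and essentially identical to the paper's own argument: both handle the trivial case $x_i(t)=0$ separately, then choose the unit vector $v=x_i(t)/\lvert x_i(t)\rvert$, apply the upper bound from Lemma~\ref{L1}, and conclude via Cauchy--Schwarz that $\lvert x_i(t)\rvert=\langle x_i(t),v\rangle\le\max_j\langle x_j(t_{2n}),v\rangle\le M_{2n}^0$.
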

	\begin{proof}
		Given $i=1,\dots,N$ and $t\in [t_{2n},t_{2n+1}]$, if $\lvert x_{i}(t)\rvert =0$ then trivially $M_{2n}^{0}\geq 0=\lvert x_{i}(t)\rvert $. On the contrary, if $\lvert x_{i}(t)\rvert >0$, we define $$v=\frac{x_{i}(t)}{\lvert x_{i}(t)\rvert},$$
		which is a unit vector for which we can write
		$$\lvert x_{i}(t)\rvert=\langle x_{i}(t),v\rangle. $$
		Then, using \eqref{scalpr} and the Cauchy-Schwarz inequality we get $$\lvert x_{i}(t)\rvert\leq \max_{j=1,\dots,N}\langle x_{j}(t_{2n}),v\rangle\leq \max_{j=1,\dots,N}\lvert x_{j}(t_{2n})\rvert\lvert v\rvert$$$$=\max_{j=1,\dots,N}\lvert x_{j}(t_{2n})\rvert=M_{2n}^{0},$$
		which proves \eqref{boundsol}.
	\end{proof}
Now, we find a bound from below for the influence function $\psi$. This will be crucial for the proof of the asymptotic consensus.
\begin{prop}\label{14nov}
	Assume \eqref{bad}. Then, for all $t\geq 0$, we have that
	\begin{equation}\label{min}
		\psi(x_{i}(t),x_{j}(t))\geq \psi_{0},\quad \forall i,j=1,\dots,N,
	\end{equation}
where $\psi_0$ is the positive constant in \eqref{psi0}.
\end{prop}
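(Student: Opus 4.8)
The plan is to deduce the pointwise bound from a uniform a priori bound on the trajectory: I will show that $\max_{i=1,\dots,N}\lvert x_i(t)\rvert\le M^0$ for every $t\ge0$, with $M^0$ as in \eqref{m0}. Granting this, the statement is immediate: the set $B:=\{(y,z)\in\RR^d\times\RR^d:\lvert y\rvert,\lvert z\rvert\le M^0\}$ is compact, $\psi$ is continuous and strictly positive on $B$, so $\psi_0=\min_B\psi$ is attained and positive; and since $(x_i(t),x_j(t))\in B$ for all $t\ge0$ and all $i,j$, we get $\psi(x_i(t),x_j(t))\ge\psi_0$.

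For the a priori bound I would use the confinement results of Section~3. On a good interval $[t_{2n},t_{2n+1}]$, Lemma~\ref{L3} gives $\lvert x_i(t)\rvert\le M^0_{2n}$, hence $M^0_{2n+1}\le M^0_{2n}$. On a bad interval $[t_{2n+1},t_{2n+2}]$, the projection argument in the proof of Lemma~\ref{L3} applies verbatim starting from Lemma~\ref{L1-1} instead of Lemma~\ref{L1}: for $t\in[t_{2n+1},t_{2n+2}]$ with $\lvert x_i(t)\rvert>0$, taking $v=x_i(t)/\lvert x_i(t)\rvert$ gives $\lvert x_i(t)\rvert=\langle x_i(t),v\rangle\le\max_j\langle x_j(t_{2n+2}),v\rangle\le M^0_{2n+2}$. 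So $\sup_{[t_{2n+1},t_{2n+2}]}\lvert x_i\rvert\le M^0_{2n+2}$, and the whole problem reduces to bounding the sequence $(M^0_{2n})_n$, i.e.\ to controlling the one-step growth across a bad interval.

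To do so, fix a unit vector $v$ and set $\mathcal M_v(t)=\max_i\langle x_i(t),v\rangle$, $\mu_v(t)=\min_i\langle x_i(t),v\rangle$. On $[t_{2n+1},t_{2n+2}]$ we have $\alpha\equiv-1$, and a Grönwall estimate as in the proof of Lemma~\ref{L1-1} gives $D^+\mathcal M_v(t)\le K\big(\mathcal M_v(t)-\mu_v(t)\big)$ for the upper Dini derivative; moreover Lemma~\ref{L1-1} yields $\mu_v(t)\ge\min_j\langle x_j(t_{2n+2}),v\rangle\ge -M^0_{2n+2}$ there, whence $D^+\mathcal M_v(t)\le K\big(\mathcal M_v(t)+M^0_{2n+2}\big)$. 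Integrating on $[t_{2n+1},t_{2n+2}]$ and then taking the supremum over all unit vectors $v$ (using $\sup_{\lvert v\rvert=1}\mathcal M_v(t_k)=M^0_k$ for every $k$), one gets $2M^0_{2n+2}\le e^{K(t_{2n+2}-t_{2n+1})}\big(M^0_{2n+1}+M^0_{2n+2}\big)$, which by \eqref{tn} rearranges to
$$
M^0_{2n+2}\ \le\ \frac{e^{K(t_{2n+2}-t_{2n+1})}}{\,2-e^{K(t_{2n+2}-t_{2n+1})}\,}\,M^0_{2n+1}\ \le\ \frac{e^{K(t_{2n+2}-t_{2n+1})}}{\,2-e^{K(t_{2n+2}-t_{2n+1})}\,}\,M^0_{2n}.
$$

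Iterating, $M^0_{2n}\le\Big(\prod_{p=0}^{n-1}\frac{e^{K(t_{2p+2}-t_{2p+1})}}{2-e^{K(t_{2p+2}-t_{2p+1})}}\Big)M^0_0$, where the right-hand factors are exactly those in \eqref{sum1}. Since $\ln\frac{e^{K\delta}}{2-e^{K\delta}}\sim 2K\delta$ as $\delta\to0$, condition \eqref{bad} — which forces $t_{2p+2}-t_{2p+1}\to0$ and is, under \eqref{tn}, equivalent to \eqref{sum1} — makes this infinite product converge; hence $\sup_nM^0_{2n}<+\infty$, and one denotes by $M^0$ the resulting finite bound (cf.\ \eqref{m0}). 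Combined with the good- and bad-interval confinement estimates this gives $\max_i\lvert x_i(t)\rvert\le M^0$ for all $t\ge0$, and the Proposition follows. I expect the main obstacle to be precisely this last part: extracting the clean one-step estimate across a bad interval from the confinement lemmas and checking that the product of the amplification factors it generates is finite — equivalently, that \eqref{bad} (i.e.\ \eqref{sum1}) is the correct hypothesis — and controlled by the constant $M^0$.
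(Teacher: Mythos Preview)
Your strategy is the paper's: establish $\max_i|x_i(t)|\le M^0$ by combining confinement on good intervals (Lemma~\ref{L3}) with a growth control across each bad interval, then read off \eqref{min} from the definition of $\psi_0$. The confinement on bad intervals via the projection argument and Lemma~\ref{L1-1} also matches. The only substantive difference is the one-step growth factor across a bad interval. The paper runs the Gr\"onwall argument for the single scalar $\langle x_i(s),v\rangle$ (with $v=x_i(t)/|x_i(t)|$ fixed) against the lower barrier $m_{t_{2n+2}}=\min_l\langle x_l(t_{2n+2}),v\rangle$ supplied by Lemma~\ref{L1-1}, and after dropping the resulting $m_{t_{2n+2}}$-term obtains $|x_i(t)|\le e^{K(t_{2n+2}-t_{2n+1})}M^0_{2n}$; iterating gives exactly the $M^0$ in \eqref{m0}. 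You instead feed the cruder bound $\mu_v(t)\ge -M^0_{2n+2}$ into the Dini inequality for $\mathcal M_v$, which produces the larger amplification factor $\dfrac{e^{K\delta}}{2-e^{K\delta}}$. Your infinite product still converges under \eqref{bad} (your asymptotic $\ln\frac{e^{K\delta}}{2-e^{K\delta}}\sim 2K\delta$ and the equivalence with \eqref{sum1} are correct), but it is strictly larger than $e^{K\sum_p\delta_p}$, so the constant you end up calling $M^0$ is not the $M^0$ of \eqref{m0}. Consequently the lower bound on $\psi$ that your argument yields is the minimum over a larger ball and is a priori smaller than the $\psi_0$ of \eqref{psi0}; strictly speaking you have proven a weaker inequality than \eqref{min}. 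To match the statement as written you would need to recover the paper's sharper one-step bound $M^0_{2n+2}\le e^{K(t_{2n+2}-t_{2n+1})}M^0_{2n}$ rather than your diameter-type estimate.
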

\begin{oss}
	Let us note that the previous result holds in particular under assumption \eqref{sum1}. Indeed, we have already noticed that the condition \eqref{sum1} implies \eqref{bad}.
\end{oss}
\begin{proof}[Proof of Proposition \ref{14nov}] 
	From \eqref{boundsol}, it follows that
	\begin{equation}\label{unifboundsol}
		\max_{i=1,\dots,N}\lvert x_{i}(t)\rvert\leq M^{0},\quad \forall t\geq 0.
	\end{equation}
To see this, fix $t\geq 0$. Then, there exists $n\in \mathbb{N}_{0}$ such that $t\in [t_{2n},t_{2n+2}]$. Thus, if $t\in [t_{2n},t_{2n+1}]$, from \eqref{boundsol} we have that
\begin{equation}\label{boundsol1}
	\lvert x_{i}(t)\lvert\leq M^{0}_{2n}=\max_{i=1,\dots,N}\lvert x_{i}(t_{2n})\lvert, \quad\forall i=1,\dots,N.
\end{equation}
On the other hand, assume that $t\in (t_{2n+1},t_{2n+2})$. Given $i=1,\dots,N$, if $\lvert x_{i}(t)\lvert>0$, we define the unit vector $$v=\frac{x_{i}(t)}{\lvert x_{i}(t)\lvert}.$$
Then, $$\lvert x_{i}(t)\lvert=\langle  x_{i}(t),v\rangle.$$
Now, for all $s\in [t_{2n+1},t)$, it holds that
$$\frac{d}{ds}\langle  x_{i}(s),v\rangle=-\frac{1}{N-1}\sum_{j:j\neq i}\psi(x_{i}(s),x_{j}(s))(\langle  x_{j}(s),v\rangle-\langle  x_{i}(s),v\rangle)$$
$$=\frac{1}{N-1}\sum_{j:j\neq i}\psi(x_{i}(s),x_{j}(s))(\langle  x_{i}(s),v\rangle-\langle  x_{j}(s),v\rangle).$$
Thus, denoted with $$m_{t_{2n+2}}=\min_{l=1,\dots,N}\langle x_{l}(t_{2n+2}),v\rangle,$$
the first inequality in \eqref{scalpr-1} implies that
$$\langle x_{l}(s),v\rangle\geq m_{t_{2n+2}}, \quad \forall s\in [t_{2n+1},t],\, \forall l=1,\dots,N.$$
As a consequence, we get
$$\frac{d}{ds}\langle  x_{i}(s),v\rangle\leq \frac{1}{N-1}\sum_{j:j\neq i}\psi(x_{i}(s),x_{j}(s))(\langle  x_{i}(s),v\rangle-m_{t_{2n+2}})$$
$$\leq K(\langle  x_{i}(s),v\rangle-m_{t_{2n+2}}).$$
So, the Gronwall's inequality yields
$$\langle  x_{i}(s),v\rangle\leq e^{K(s-t_{2n+1})}\langle  x_{i}(t_{2n+1}),v\rangle-Km_{t_{2n+2}}\int_{t_{2n+1}}^{s}e^{K(s-r)}dr$$
$$=e^{K(s-t_{2n+1})}\langle  x_{i}(t_{2n+1}),v\rangle+m_{t_{2n+2}}(1-e^{K(s-t_{2n+1})}),$$
for all $s\in [t_{2n+1},t]$. In particular, for $s=t$ it comes that
$$\begin{array}{l}
	\vspace{0.3cm}\displaystyle{\langle  x_{i}(t),v\rangle\leq e^{K(t-t_{2n+1})}\langle  x_{i}(t_{2n+1}),v\rangle+m_{t_{2n+2}}(1-e^{K(t-t_{2n+1})})}\\
	\vspace{0.3cm}\displaystyle{\hspace{1.6cm}=e^{K(t-t_{2n+1})}(\langle  x_{i}(t_{2n+1}),v\rangle-m_{t_{2n+2}})+m_{t_{2n+2}}}\\
	\vspace{0.3cm}\displaystyle{\hspace{1.6cm}\leq e^{K(t_{2n+2}-t_{2n+1})}(\langle  x_{i}(t_{2n+1}),v\rangle-m_{t_{2n+2}})+m_{t_{2n+2}}}\\
	\vspace{0.3cm}\displaystyle{\hspace{1.6cm}=e^{K(t_{2n+2}-t_{2n+1})}\langle  x_{i}(t_{2n+1}),v\rangle+m_{t_{2n+2}}(1-e^{K(t_{2n+2}-t_{2n+1})})}\\
	\vspace{0.3cm}\displaystyle{\hspace{1.6cm}\leq e^{K(t_{2n+2}-t_{2n+1})}\langle  x_{i}(t_{2n+1}),v\rangle}\\
	\displaystyle{\hspace{1.6cm}\leq e^{K(t_{2n+2}-t_{2n+1})}  \lvert x_{i}(t_{2n+1})\rvert.}
\end{array}$$
Thus, using \eqref{boundsol} we get $$\lvert x_{i}(t)\rvert=\langle  x_{i}(t),v\rangle\leq e^{K(t_{2n+2}-t_{2n+1})}M^{0}_{2n}.$$
Of course, the above inequality is satisfied also if $\lvert x_{i}(t)\rvert=0$. Thus,  combining this last inequality with \eqref{boundsol1}, being $e^{K(t_{2n+2}-t_{2n+1})}>1$, we can conclude that
\begin{equation}\label{boundsol2}
	\lvert x_{i}(t)\rvert\leq e^{K(t_{2n+2}-t_{2n+1})}M^{0}_{2n},\quad \forall n\in \mathbb{N}_{0},\, t\in [t_{2n},t_{2n+2}],\,i=1,\dots,N.
\end{equation}
Now, let us note that, using an induction argument, from \eqref{boundsol2} it follows that
$$M^{0}_{2n+2}=\max_{i=1,\dots,N}\lvert x_{i}(t_{2n+2})\rvert\leq M^{0}_{0}\prod_{p=0}^{n}e^{K(t_{2p+2}-t_{2p+1})},\quad \forall n\geq 0.$$
As a consequence, for all $t\geq 0$, it holds
$$\lvert x_{i}(t)\rvert\leq M^{0}_{0}\prod_{p=0}^{n}e^{K(t_{2p+2}-t_{2p+1})}=e^{K\sum_{p=0}^{n}(t_{2p+2}-t_{2p+1})}M^{0}_{0}.$$
Then, for all $t\geq 0$, $$\lvert x_{i}(t)\rvert\leq M^{0}_{0}e^{K\sum_{p=0}^{\infty}(t_{2p+2}-t_{2p+1})},$$
which proves \eqref{unifboundsol}.
\\Finally, from \eqref{unifboundsol}, we deduce that, for all $t\geq 0$
$$\psi(x_{i}(t),x_{i}(t))\geq \psi_{0}.$$
\end{proof}	
\begin{prop}\label{L4}
		For all $i,j=1,\dots,N$,  unit vector $v\in \RR^{d}$ and $n\in\mathbb{N}_{0}$ we have that 
		\begin{equation}\label{4}
			\langle x_{i}(t)-x_{j}(t),v\rangle\leq e^{-K(t-\bar{t})}\langle x_{i}(\bar{t})-x_{j}(\bar{t}),v\rangle+(1-e^{-K(t-\bar{t})})d(t_{2n}),
		\end{equation}
		for all $t_{2n+1}\geq t\geq \bar{t}\geq t_{2n}$. 
	\end{prop}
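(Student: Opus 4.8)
The plan is to obtain, on the attractive interval $[t_{2n},t_{2n+1}]$, the two one-sided scalar estimates
$$\langle x_i(t),v\rangle\le e^{-K(t-\bar t)}\langle x_i(\bar t),v\rangle+\bigl(1-e^{-K(t-\bar t)}\bigr)M,\qquad \langle x_j(t),v\rangle\ge e^{-K(t-\bar t)}\langle x_j(\bar t),v\rangle+\bigl(1-e^{-K(t-\bar t)}\bigr)m,$$
where $M:=\max_{l=1,\dots,N}\langle x_l(t_{2n}),v\rangle$ and $m:=\min_{l=1,\dots,N}\langle x_l(t_{2n}),v\rangle$, and then to subtract them and use $M-m\le d(t_{2n})$. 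Both estimates come from a Gronwall argument in the spirit of the proof of Lemma~\ref{L1}.

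For the first one, fix $t\in(t_{2n},t_{2n+1})$, so $\alpha(t)=1$, and write
$$\frac{d}{dt}\langle x_i(t),v\rangle=\frac{1}{N-1}\sum_{k\ne i}\psi(x_i(t),x_k(t))\bigl(\langle x_k(t),v\rangle-\langle x_i(t),v\rangle\bigr).$$
By Lemma~\ref{L1} we have $\langle x_k(t),v\rangle\le M$ for every $k$ and $\langle x_i(t),v\rangle\le M$, so each summand is at most $\psi(x_i(t),x_k(t))\,(M-\langle x_i(t),v\rangle)$; since $M-\langle x_i(t),v\rangle\ge 0$ and $\psi\le K$ by \eqref{K}, this is at most $K\,(M-\langle x_i(t),v\rangle)$ no matter the sign of $\langle x_k(t),v\rangle-\langle x_i(t),v\rangle$. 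As the sum has exactly $N-1$ terms, $\frac{d}{dt}\langle x_i(t),v\rangle\le K\bigl(M-\langle x_i(t),v\rangle\bigr)$; multiplying by $e^{Kt}$ and integrating on $[\bar t,t]$ gives the first estimate. Applying the same reasoning to the particle $x_j$ with the vector $-v$ in place of $v$ (note $\max_l\langle x_l(t_{2n}),-v\rangle=-m$) and multiplying the resulting inequality by $-1$ yields the second estimate.

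Subtracting the second estimate from the first gives
$$\langle x_i(t)-x_j(t),v\rangle\le e^{-K(t-\bar t)}\langle x_i(\bar t)-x_j(\bar t),v\rangle+\bigl(1-e^{-K(t-\bar t)}\bigr)(M-m),$$
valid for $t_{2n+1}\ge t\ge\bar t\ge t_{2n}$. Finally, exactly as in the proof of Lemma~\ref{L2}, $M-m=\max_l\langle x_l(t_{2n}),v\rangle-\min_l\langle x_l(t_{2n}),v\rangle\le\max_{l,k}\langle x_l(t_{2n})-x_k(t_{2n}),v\rangle\le\max_{l,k}|x_l(t_{2n})-x_k(t_{2n})|\,|v|=d(t_{2n})$, using that $v$ is a unit vector; substituting this bound proves \eqref{4}. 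The only point requiring care is the sign bookkeeping in the second paragraph — controlling the summand by $K(M-\langle x_i(t),v\rangle)$ even when $\langle x_k(t),v\rangle<\langle x_i(t),v\rangle$, and handling the $-v$ version correctly — while the rest is the usual Gronwall and Cauchy–Schwarz.
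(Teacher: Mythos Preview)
Your proof is correct and follows essentially the same approach as the paper's: both derive the one-sided Gronwall bounds $\langle x_i(t),v\rangle\le e^{-K(t-\bar t)}\langle x_i(\bar t),v\rangle+(1-e^{-K(t-\bar t)})M_{t_{2n}}$ and the analogous lower bound with $m_{t_{2n}}$, then subtract and use $M_{t_{2n}}-m_{t_{2n}}\le d(t_{2n})$. The only cosmetic difference is that the paper repeats the derivative computation directly to obtain the lower bound, whereas you obtain it by applying the upper bound to $-v$; these are equivalent.
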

	\begin{proof}
		Fix $n\in\mathbb{N}_{0}$ and $v\in\RR^{d}$ such that $\lvert v\rvert=1$. We set $$M_{t_{2n}}=\max_{i=1,\dots,N}\langle x_{i}(t_{2n}),v\rangle,$$$$m_{t_{2n}}=\min_{i=1,\dots,N}\langle x_{i}(t_{2n}),v\rangle.$$
		Then, it is easy to see that $M_{t_{2n}}-m_{t_{2n}}\leq d(t_{2n})$. Now, for all $i=1,\dots,N$ and $t_{2n+1}>t\geq \bar{t}\geq t_{2n}$, from \eqref{scalpr} we have that 
		$$
		\begin{array}{l}
			\displaystyle{
				\frac{d}{dt}\langle x_{i}(t),v\rangle=\sum_{j:j\neq i}\alpha(t)a_{ij}(t)\langle x_{j}(t)-x_{i}(t),v\rangle}\\
			\displaystyle{\hspace{2 cm}=\frac{1}{N-1}\sum_{j:j\neq i}\psi( x_{i}(t), x_{j}(t))(\langle x_{j}(t),v\rangle-\langle x_{i}(t),v\rangle)}\\
			\displaystyle{\hspace{2 cm}\leq \frac{1}{N-1}\sum_{j:j\neq i}\psi(x_{i}(t), x_{j}(t))(M_{t_{2n}}-\langle x_{i}(t),v\rangle).}
		\end{array}
		$$
		Note that $\langle x_{i}(t),v\rangle\leq M_{t_{2n}}$ from \eqref{scalpr}, so that $M_{t_{2n}}-\langle x_{i}(t),v\rangle\geq 0$. Therefore, using \eqref{K}, we can write $$\frac{d}{dt}\langle x_{i}(t),v\rangle\leq\frac{1}{N-1}K\sum_{j:j\neq i}(M_{t_{2n}}-\langle x_{i}(t),v\rangle)=K(M_{t_{2n}}-\langle x_{i}(t),v\rangle).$$
		Thus, from the Gronwall's inequality it comes that $$\langle x_{i}(t),v\rangle\leq e^{-K(t-\bar{t})}\langle x_{i}(\bar{t}),v\rangle+\int_{\bar{t}}^{t}KM_{t_{2n}}e^{-K(t-\bar{t})+K(s-\bar{t})}ds$$
		$$\hspace{1.3cm}=e^{-K(t-\bar{t})}\langle x_{i}(\bar{t}),v\rangle+e^{-K(t-\bar{t})}M_{t_{2n}}(e^{K(t-\bar{t})}-1),$$
		that is \begin{equation}\label{2}
			\langle x_{i}(t),v\rangle\leq e^{-K(t-\bar{t})}\langle x_{i}(\bar{t}),v\rangle+(1-e^{-K(t-\bar{t})})M_{t_{2n}}.
		\end{equation}
		Now, for all $i=1,\dots,N$ and $t_{2n+1}>t\geq \bar{t}\geq t_{2n}$, using \eqref{scalpr} it holds
		$$
		\begin{array}{l}
			\displaystyle{
				\frac{d}{dt}\langle x_{i}(t),v\rangle=\frac{1}{N-1}\sum_{i:j\neq i}\alpha(t)\psi(x_{i}(t),x_{j}(t)(\langle x_{j}(t),v\rangle-\langle x_{i}(t),v\rangle)}\\
			\displaystyle{
				\hspace{2 cm}\geq \frac{1}{N-1}\sum_{j:j\neq i}\psi(x_{i}(t),x_{j}(t))(m_{t_{2n}}-\langle x_{i}(t),v\rangle).}
		\end{array}
		$$
		Then, since $\langle x_{i}(t),v\rangle\geq m_{t_{2n}}$ from \eqref{scalpr} and by recalling that $\psi$ is bounded from \eqref{K}, we get $$\frac{d}{dt}\langle x_{i}(t),v\rangle\geq K(m_{t_{2n}}-\langle x_{i}(t),v\rangle).$$
		Thus, by using the Gronwall's inequality, it turns out that
		\begin{equation}\label{3}
				\langle x_{i}(t),v\rangle\geq e^{-K(t-\bar{t})}\langle x_{i}(\bar{t}),v\rangle+(1-e^{-K(t-\bar{t})})m_{t_{2n}}.
		\end{equation}
		Therefore, for all $i,j=1,\dots,N$ and $t_{2n+1}>t\geq \bar{t}\geq t_{2n}$, by using \eqref{2} and \eqref{3} and by recalling that $M_{t_{2n}}-m_{t_{2n}}\leq d(t_{2n}),$ we finally get 
		$$
		\begin{array}{l}
			\vspace{0.3cm}\displaystyle{\langle x_{i}(t)-x_{j}(t),v\rangle=\langle x_{i}(t),v\rangle-\langle x_{j}(t),v\rangle}\\
			\vspace{0.3cm}\displaystyle{\hspace{2.8 cm}\leq e^{-K(t-\bar{t})}\langle x_{i}(\bar{t}),v\rangle+(1-e^{-K(t-\bar{t})})M_{t_{2n}}}\\
			\vspace{0.3cm}\displaystyle{\hspace{3.1 cm}-e^{-K(t-\bar{t})}\langle x_{j}(\bar{t}),v\rangle-(1-e^{-K(t-\bar{t})})m_{t_{2n}}}\\
			\vspace{0.3cm}\displaystyle{\hspace{2.8 cm}=e^{-K(t-\bar{t})}\langle x_{i}(\bar{t})-x_{j}(\bar{t}),v\rangle+(1-e^{-K(t-\bar{t})})(M_{t_{2n}}-m_{t_{2n}})}\\
			\displaystyle{\hspace{2.8 cm}\leq e^{-K(t-\bar{t})}\langle x_{i}(\bar{t})-x_{j}(\bar{t}),v\rangle+(1-e^{-K(t-\bar{t})})d(t_{2n}),}
		\end{array}
		$$
		i.e. \eqref{4} holds true.
	\end{proof}
	Thanks to the presence of a uniform bound on the influence function $\psi$, the following fundamental estimate holds in the intervals of positive interaction.
	\begin{prop}\label{L5}
		For all $n\in \mathbb{N}_{0}$, there exists a constant $C_{2n}\in (0,1),$ independent of $N\in\mathbb{N}_0,$ such that
		\begin{equation}\label{n-2}
			d(t_{2n+1})\leq C_{2n} d(t_{2n}).
		\end{equation}
	\end{prop}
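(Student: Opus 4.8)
\textbf{Proof plan for Proposition \ref{L5}.}
The goal is to show a strict contraction of the diameter over a single positive-interaction interval $[t_{2n},t_{2n+1}]$, with a rate $C_{2n}<1$ depending only on $K$, $\psi_0$, and the length $t_{2n+1}-t_{2n}$, but not on $N$. The natural approach is a two-point argument: fix the pair $i,j$ realizing $d(t_{2n+1})=\lvert x_i(t_{2n+1})-x_j(t_{2n+1})\rvert$ and choose the unit vector $v=(x_i(t_{2n+1})-x_j(t_{2n+1}))/d(t_{2n+1})$ so that $d(t_{2n+1})=\langle x_i(t_{2n+1})-x_j(t_{2n+1}),v\rangle$. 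Then the plan is to estimate $\langle x_i(t)-x_j(t),v\rangle$ from above on $[t_{2n},t_{2n+1}]$ by a quantity that, at $t=t_{2n+1}$, is a strict fraction of $d(t_{2n})$.

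The key input is Proposition \ref{14nov}, which upgrades the lower bound on $\psi$ from $0$ to the positive constant $\psi_0$ along the solution. Using this in the differential inequality for $\langle x_i(t)-x_j(t),v\rangle$: writing $w(t):=\langle x_i(t)-x_j(t),v\rangle$, I would compute
$$\frac{d}{dt}w(t)=\frac{1}{N-1}\sum_{k:k\neq i}\psi(x_i,x_k)\langle x_k-x_i,v\rangle-\frac{1}{N-1}\sum_{k:k\neq j}\psi(x_j,x_k)\langle x_k-x_j,v\rangle,$$
and bound each $\langle x_k-x_i,v\rangle\le \langle x_j-x_i,v\rangle+\langle x_k-x_j,v\rangle$ type term. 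The cleanest route is: since $x_i$ is (by Lemma \ref{L1} applied with $v$) the agent attaining values $\le M_{t_{2n}}$ and $x_j$ the one attaining values $\ge m_{t_{2n}}$, split the sum into the "pull toward the bulk'' part, controlled by $K$ as in Proposition \ref{L4} giving the $e^{-K(t-t_{2n})}$ factor, and isolate one term — say the $k=j$ term in the $i$-equation and the $k=i$ term in the $j$-equation — which contributes $-\tfrac{1}{N-1}\psi(x_i,x_j)\,w(t)$ twice, i.e. $-\tfrac{2}{N-1}\psi(x_i,x_j)w(t)\le -\tfrac{2\psi_0}{N-1}w(t)$ when $w\ge 0$. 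Combined, one gets $\frac{d}{dt}w(t)\le -K w(t)+K d(t_{2n}) - (\text{extra negative drift})$; however the $N$-dependence of the isolated-term drift is the obstacle.

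The way around the $N$-dependence — and this is the main obstacle I anticipate — is to not isolate a single term but instead to exploit the geometry more carefully. The correct uniform estimate comes from running Proposition \ref{L4} and its mirror for the minimum, but replacing the crude bound $M_{t_{2n}}-m_{t_{2n}}\le d(t_{2n})$ at the \emph{endpoint} by the observation that, because the common lower bound $\psi\ge\psi_0$ forces genuine averaging, the extreme agents move toward the center at rate at least $\psi_0$ times the gap to the center of mass, not merely toward $M_{t_{2n}}$. Concretely, I would show $\langle x_i(t),v\rangle\le e^{-K(t-t_{2n})}\langle x_i(t_{2n}),v\rangle+(1-e^{-K(t-t_{2n})})M_{t_{2n}}-c(t)$ where the correction $c(t)$ captures that the term $\psi(x_i,x_j)(\langle x_j,v\rangle-\langle x_i,v\rangle)/(N-1)$ is bounded below in absolute value using $\psi_0$ and the fact that $\langle x_j,v\rangle\ge m_{t_{2n}}$ while $\langle x_i,v\rangle$ stays near $M_{t_{2n}}$ only if the gap persists — so a Gronwall-type bootstrap yields $d(t_{2n+1})\le\bigl(1-\tfrac{\psi_0}{K}(1-e^{-K(t_{2n+1}-t_{2n})})\bigr)d(t_{2n})$ together with the competing bound $d(t_{2n+1})\le(1-e^{-K(t_{2n+1}-t_{2n})})d(t_{2n})$ from the plain contraction toward a single agent. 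Taking $C_{2n}:=\max\{1-e^{-K(t_{2n+1}-t_{2n})},\,1-\tfrac{\psi_0}{K}(1-e^{-K(t_{2n+1}-t_{2n})})\}$, which matches exactly the expression appearing inside the logarithm in hypothesis \eqref{sum2}, gives $C_{2n}\in(0,1)$ independent of $N$, proving \eqref{n-2}. I would double-check that $C_{2n}<1$: the first term is $<1$ since $t_{2n+1}>t_{2n}$, and the second is $<1$ since $\psi_0,K>0$; positivity of both is clear. The delicate point throughout is ensuring every constant is $N$-free, which is why the argument must be phrased via the one-dimensional projections $\langle\cdot,v\rangle$ and the sandwich estimates of Lemma \ref{L1} rather than via summing $N-1$ individual interaction terms.
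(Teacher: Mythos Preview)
Your target constant $C_{2n}=\max\{1-e^{-K(t_{2n+1}-t_{2n})},\,1-\tfrac{\psi_0}{K}(1-e^{-K(t_{2n+1}-t_{2n})})\}$ is exactly right, and the setup (pick $i,j$ realizing $d(t_{2n+1})$, project onto the unit vector $v$, use $\psi\ge\psi_0$ from Proposition~\ref{14nov}) matches the paper. However, your narrative has a genuine gap in explaining \emph{why} the maximum of two constants appears, and your attribution of the first bound is incorrect.

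The paper's argument is a clean dichotomy on the sign of $w(t):=\langle x_i(t)-x_j(t),v\rangle$ over $[t_{2n},t_{2n+1})$. \emph{Case~I}: if $w(\bar t)<0$ for some $\bar t$, then Proposition~\ref{L4} applied from $\bar t$ to $t_{2n+1}$ gives $d(t_{2n+1})\le e^{-K(t_{2n+1}-\bar t)}w(\bar t)+(1-e^{-K(t_{2n+1}-\bar t)})d(t_{2n})\le(1-e^{-K(t_{2n+1}-t_{2n})})d(t_{2n})$, since the first term is negative. This is the true origin of the bound $(1-e^{-K(\cdot)})$ --- it is \emph{not} ``plain contraction toward a single agent'' but rather the fortuitous sign flip that kills the first Gronwall term entirely. \emph{Case~II}: if $w(t)\ge 0$ throughout, then the paper decomposes $\frac{d}{dt}w(t)$ by inserting $\pm M_{t_{2n}}$ in the $i$-sum and $\pm m_{t_{2n}}$ in the $j$-sum. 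Each sum splits into a nonpositive piece (where $\psi\ge\psi_0$ is used) and a nonnegative piece (where $\psi\le K$ is used); the $N-1$ summands telescope so that, after using $w(t)\ge 0$, one obtains the $N$-free inequality
\[
\frac{d}{dt}w(t)\le (K-\psi_0)(M_{t_{2n}}-m_{t_{2n}})-K\,w(t),
\]
and Gronwall then gives the second constant $1-\tfrac{\psi_0}{K}(1-e^{-K(t_{2n+1}-t_{2n})})$.

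Your third paragraph circles around this second computation but never lands on it: the phrases ``extreme agents move toward the center at rate at least $\psi_0$'' and ``Gronwall-type bootstrap'' are suggestive, but the actual mechanism that removes the $N$-dependence is precisely the add-and-subtract of $M_{t_{2n}},m_{t_{2n}}$ inside each of the $N-1$ terms, which turns the sum into a full average bounded by $K$ on one side and $\psi_0$ on the other. Without the sign hypothesis $w(t)\ge 0$ of Case~II, that last simplification fails --- which is exactly why the case split is needed, and why you must take the \emph{maximum} rather than the minimum of the two constants.
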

	\begin{proof}
		Let $n\in \mathbb{N}_{0}$. Trivially, if $d(t_{2n+1})=0$, then of course inequality \eqref{n-2} holds for any positive constant. So, suppose $d(t_{2n+1})>0$. Let $i,j=1,\dots,N$ be such that $d(t_{2n+1})=\lvert x_{i}(t_{2n+1})-x_{j}(t_{2n+1})\rvert$. We set $$v=\frac{x_{i}(t_{2n+1})-x_{j}(t_{2n+1})}{\lvert x_{i}(t_{2n+1})-x_{j}(t_{2n+1})\rvert}.$$
		Then, $v$ is a unit vector for which we can write $$d(t_{2n+1})=\langle x_{i}(t_{2n+1})-x_{j}(t_{2n+1}),v\rangle.$$
		Let us define $$M_{t_{2n}}=\max_{l=1,\dots,N} x_{l}(t_{2n}),v\rangle,$$
		$$m_{t_{2n}}=\min_{l=1,\dots,N}\langle x_{l}(t_{2n}),v\rangle.$$
		Then $M_{t_{2n}}-m_{t_{2n}}\leq d(t_{2n})$. 
		\\Now, we distinguish two different situations.
		\par\textit{Case I.} Assume that there exists $\bar{t}\in [t_{2n},t_{2n+1})$ such that 
		$$\langle x_{i}(\bar{t})-x_{j}(\bar{t}),v\rangle<0.$$
		Then, from \eqref{4} with $t_{2n+1}\geq \bar{t}\geq t_{2n}$, we have \begin{equation}\label{t0}
			\begin{split}
				d(t_{2n+1})&\leq e^{-K(t_{2n+1}-\bar{t})}\langle x_{i}(\bar{t})-x_{j}(\bar{t}),v\rangle+(1-e^{-K(t_{2n+1}-\bar{t})})d(t_{2n})\\&\leq (1-e^{-K(t_{2n+1}-\bar{t})})d(t_{2n})\\&\leq (1-e^{-K(t_{2n+1}-t_{2n})})d(t_{2n}).
			\end{split}
		\end{equation}
		\par\textit{Case II.} Assume it rather holds \begin{equation}\label{pos}
			\langle x_{i}(t)-x_{j}(t),v\rangle\geq 0,\quad \forall t\in [t_{2n},t_{2n+1}).
		\end{equation}
		Then, for every  $t\in [t_{2n},t_{2n+1})$, we have that 
		$$
		\begin{array}{l}
			\displaystyle{
				\frac{d}{dt}\langle x_{i}(t)-x_{j}(t),v\rangle=\frac{1}{N-1}\sum_{l:l\neq i}\psi(x_{i}(t),x_{l}(t))\langle x_{l}(t)-x_{i}(t),v\rangle}\\
			\displaystyle{\hspace{1.1cm}-\frac{1}{N-1}\sum_{l:l\neq j}\psi(x_{i}(t),x_{l}(t))\langle x_{l}(t)-x_{j}(t),v\rangle}\\
			\displaystyle{\hspace{0.6cm}=\frac{1}{N-1}\sum_{l:l\neq i}\psi(x_{i}(t),x_{l}(t))(\langle x_{l}(t),v\rangle-M_{t_{2n}}+M_{t_{2n}}-\langle x_{i}(t),v\rangle)}\\
			\displaystyle{\hspace{1.1cm}+\frac{1}{N-1}\sum_{l:l\neq j}\psi(x_{i}(t),x_{l}(t))(\langle x_{j}(t),v\rangle-m_{t_{2n}}+m_{t_{2n}}-\langle x_{l}(t),v\rangle)}\\
			\displaystyle{\hspace{5.5 cm}:=S_1+S_2.}
		\end{array}
		$$
		Now, being $t\in [t_{2n},t_{2n+1})$,  from \eqref{scalpr} we have that
		\begin{equation}\label{7}
			m_{t_{2n}}\leq\langle x_{k}(t),v\rangle\leq M_{t_{2n}}, \quad\forall k=1,\dots,N.
		\end{equation}
		Therefore, using \eqref{boundsol}, we get
		$$
		\begin{array}{l}
			\displaystyle{
				S_1=\frac{1}{N-1}\sum_{l:l\neq i}\psi(x_{i}(t),x_{l}(t-))(\langle x_{l}(t),v\rangle-M_{t_{2n}})}\\
			\displaystyle{\hspace{0.7 cm}+\frac{1}{N-1}\sum_{l:l\neq i}\psi(x_{i}(t),x_{l}(t))(M_{t_{2n}}-\langle x_{i}(t),v\rangle)}\\
			\displaystyle{\hspace{0.4 cm}\leq \frac{1}{N-1}\psi_{0}\sum_{l:l\neq i}(\langle x_{l}(t),v\rangle-M_{t_{2n}})+K(M_{t_{2n}}-\langle x_{i}(t),v\rangle)},
		\end{array}
		$$
		and	
		$$
		\begin{array}{l}
			\displaystyle{
				S_2=\frac{1}{N-1}\sum_{l:l\neq j}\psi(x_{i}(t),x_{l}(t))(\langle x_{j}(t),v\rangle-m_{t_{2n}})}\\
			\displaystyle{\hspace{0.7 cm}+\frac{1}{N-1}\sum_{l:l\neq j}\psi(x_{i}(t),x_{l}(t))(m_{t_{2n}}-\langle x_{l}(t),v\rangle)}\\
			\displaystyle{\hspace{0.4 cm}\leq K(\langle x_{j}(t),v\rangle-m_{t_{2n}})+\frac{1}{N-1}\psi_{0}\sum_{l:l\neq j}(m_{t_{2n}}-\langle x_{l}(t),v\rangle).}
		\end{array}
		$$
		Combining this last fact with \eqref{7} it comes that 
		$$\begin{array}{l}
			\vspace{0.2cm}\displaystyle{\frac{d}{dt}\langle x_{i}(t)-x_{j}(t),v\rangle\leq K(M_{t_{2n}}-m_{t_{2n}}-\langle x_{i}(t)-x_{j}(t),v\rangle)}\\
			\vspace{0.2cm}\displaystyle{\hspace{1.8 cm}+\frac{1}{N-1}\psi_{0}\sum_{l:l\neq i,j}(\langle x_{l}(t),v\rangle-M_{t_{2n}}+m_{t_{2n}}-\langle x_{l}(t),v\rangle)}\\
			\vspace{0.2cm}\displaystyle{\hspace{1.8 cm}+\frac{1}{N-1}\psi_{0}(\langle x_{j}(t),v\rangle-M_{t_{2n}}+m_{t_{2n}}-\langle x_{i}(t),v\rangle)}\\
			\vspace{0.2cm}\displaystyle{\hspace{1.5 cm}=K(M_{t_{2n}}-m_{t_{2n}})-K\langle x_{i}(t)-x_{j}(t),v\rangle+\frac{N-2}{N-1}\psi_{0}(-M_{t_{2n}}+m_{t_{2n}})}\\
			\vspace{0.2cm}\displaystyle{\hspace{1.8 cm}+\frac{1}{N-1}\psi_{0}(\langle x_{j}(t),v\rangle-M_{t_{2n}}+m_{t_{2n}}-\langle x_{i}(t_{2n}),v\rangle)}.
		\end{array}
		$$
		Now, from \eqref{pos} we get
		$$\begin{array}{l}
			\vspace{0.2cm}\displaystyle{
				\frac{d}{dt}\langle x_{i}(t)-x_{j}(t),v\rangle\leq K(M_{t_{2n}}-m_{t_{2n}})-K\langle x_{i}(t)-x_{j}(t),v\rangle
			}\\
			\vspace{0.4 cm}\displaystyle{\hspace{1.8 cm}
				+\frac{N-2}{N-1}\psi_{0}(-M_{t_{2n}}+m_{t_{2n}})+\frac{1}{N-1}\psi_{0}(-M_{t_{2n}}+m_{t_{2n}})
			}\\
			\vspace{0.4 cm}\displaystyle{\hspace{1.8 cm}
				-\frac{1}{N-1}\psi_{0}\langle x_{i}(t)-x_{j}(t),v\rangle
			}\\
			\vspace{0.3 cm}\displaystyle{\hspace{1.5 cm}\leq K(M_{t_{2n}}-m_{t_{2n}})-K\langle x_{i}(t)-x_{j}(t),v\rangle+\psi_{0}(-M_{t_{2n}}+m_{t_{2n}})
			}\\
			\displaystyle{\hspace{1.5 cm}=\left(K-\psi_{0}\right)(M_{t_{2n}}-m_{t_{2n}})-K\langle x_{i}(t)-x_{j}(t),v\rangle.}
		\end{array}$$
		Hence, from Gronwall's inequality it comes that $$\langle x_{i}(t)-x_{j}(t),v\rangle \leq e^{-K(t-t_{2n})}\langle x_{i}(t_{2n})-x_{j}(t_{2n}),v\rangle$$$$+(M_{t_{2n}}-m_{t_{2n}})\int_{t_{2n}}^{t}\left(K-\psi_{0}\right)e^{-K(t-s)}ds,$$
		for all $t\in [t_{2n},t_{2n+1})$. In particular, for $t= t_{2n+1}$, from \eqref{dist} it comes that 
		$$\begin{array}{l}
			\vspace{0.2cm}\displaystyle{d(t_{2n+1})\leq e^{-K(t_{2n+1}-t_{2n})}\langle x_{i}(t_{2n})-x_{j}(t_{2n}),v\rangle+(M_{t_{2n}}-m_{t_{2n}})\int_{t_{2n}}^{t_{2n+1}}(K-\psi_{0})e^{-K(t_{2n+1}-s)}ds}\\
			\vspace{0.2cm}\displaystyle{\hspace{1.3cm}\leq e^{-K(t_{2n+1}-t_{2n})}\lvert x_{i}(t_{2n})-x_{j}(t_{2n})\rvert +(M_{t_{2n}}-m_{t_{2n}})\int_{t_{2n}}^{t_{2n+1}}(K-\psi_{0})e^{-K(t_{2n+1}-s)}ds}\\
			\vspace{0.2cm}\displaystyle{\hspace{1.3cm}\leq \left(e^{-K(t_{2n+1}-t_{2n})} +K\int_{t_{2n}}^{t_{2n+1}}e^{-K(t_{2n+1}-s)}ds-\psi_{0}\int_{t_{2n}}^{t_{2n+1}}e^{-K(t_{2n+1}-s)}ds\right)d(t_{2n})}\\
			\vspace{0.2cm}\displaystyle{\hspace{1.3cm}= \left(e^{-K(t_{2n+1}-t_{2n})} +1-e^{-K(t_{2n+1}-t_{2n})}-\frac{\psi_{0}}{K}(1-e^{-K(t_{2n+1}-t_{2n})})\right)d(t_{2n})}\\
			\vspace{0.2cm}\displaystyle{\hspace{1.3cm}=\left(1-\frac{\psi_{0}}{K}(1-e^{-K(t_{2n+1}-t_{2n})})\right)d(t_{2n}).}
		\end{array}$$
		So, if we set \begin{equation}\label{cn}
			C_{2n}:=\max\left\{1-e^{-K(t_{2n+1}-t_{2n})},1-\frac{\psi_{0}}{K}(1-e^{-K(t_{2n+1}-t_{2n})})\right\},
		\end{equation}
		$C_{2n}\in (0,1)$ is the constant for which \eqref{n-2} holds.
	\end{proof}
In the bad intervals, the previous estimate is not valid, being the diameter of the solution evaluated at $t_{2n+2}$ larger than the diameter evaluated at $t_{2n+1}$. However, the growth of the diameter in the intervals with negative interaction can be in some way controlled, as the following lemma shows.\begin{lem}
	For all $n\in \mathbb{N}_{0}$, we have that
	\begin{equation}\label{boundgrowth}
		d(t_{2n+2})\leq \frac{e^{K(t_{2n+2}-t_{2n+1})}}{2-e^{K(t_{2n+2}-t_{2n+1})}}d(t_{2n+1}).
	\end{equation}
\end{lem}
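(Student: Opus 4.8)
The plan is to run, on the \emph{bad} interval $[t_{2n+1},t_{2n+2}]$, an argument dual to those behind Propositions~\ref{L4} and~\ref{L5}: since there $\alpha\equiv-1$, the contracting exponential of the good intervals is replaced by an expanding one, and the natural ``reference diameter'' on the right is no longer the one at the left endpoint but $d(t_{2n+2})$ itself, which is precisely the quantity being estimated; hypothesis~\eqref{tn} is exactly what keeps the resulting amplification finite. First I would dispose of the trivial case $d(t_{2n+2})=0$ (then~\eqref{boundgrowth} holds because, by~\eqref{tn}, $e^{K(t_{2n+2}-t_{2n+1})}<2$, so the coefficient on the right is positive). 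Otherwise I pick $i,j$ realizing $d(t_{2n+2})=\lvert x_i(t_{2n+2})-x_j(t_{2n+2})\rvert$ and the unit vector $v:=(x_i(t_{2n+2})-x_j(t_{2n+2}))/\lvert x_i(t_{2n+2})-x_j(t_{2n+2})\rvert$, so that $d(t_{2n+2})=\langle x_i(t_{2n+2})-x_j(t_{2n+2}),v\rangle$. Setting $M:=\max_{l}\langle x_l(t_{2n+2}),v\rangle$ and $m:=\min_{l}\langle x_l(t_{2n+2}),v\rangle$, one has $0\le M-m\le d(t_{2n+2})$, and Lemma~\ref{L1-1} gives $m\le\langle x_l(t),v\rangle\le M$ for all $l=1,\dots,N$ and all $t\in[t_{2n+1},t_{2n+2}]$.

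Next I would produce two Gronwall estimates on $(t_{2n+1},t_{2n+2})$, where $\alpha\equiv-1$ and hence
\[
\frac{d}{dt}\langle x_k(t),v\rangle=\frac{1}{N-1}\sum_{l\neq k}\psi(x_k(t),x_l(t))\big(\langle x_k(t),v\rangle-\langle x_l(t),v\rangle\big),\qquad k\in\{i,j\}.
\]
Since $\psi>0$, replacing $\langle x_l(t),v\rangle$ by $m$ only increases the right side, and then, as $\langle x_k(t),v\rangle-m\ge0$ and $\psi\le K$, one gets $\frac{d}{dt}\langle x_k(t),v\rangle\le K(\langle x_k(t),v\rangle-m)$; symmetrically, replacing $\langle x_l(t),v\rangle$ by $M$ and using $\langle x_k(t),v\rangle-M\le0$ with $\psi\le K$ gives $\frac{d}{dt}\langle x_k(t),v\rangle\ge K(\langle x_k(t),v\rangle-M)$. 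Integrating forward from $t_{2n+1}$ (exactly as in the Gronwall step of the proof of Proposition~\ref{14nov}) yields, for $t\in[t_{2n+1},t_{2n+2}]$,
\[
\langle x_i(t),v\rangle\le e^{K(t-t_{2n+1})}\langle x_i(t_{2n+1}),v\rangle+m\big(1-e^{K(t-t_{2n+1})}\big),
\]
\[
\langle x_j(t),v\rangle\ge e^{K(t-t_{2n+1})}\langle x_j(t_{2n+1}),v\rangle+M\big(1-e^{K(t-t_{2n+1})}\big).
\]
Subtracting, and noting that $1-e^{K(t-t_{2n+1})}\le0$, so that $(m-M)\big(1-e^{K(t-t_{2n+1})}\big)=(M-m)\big(e^{K(t-t_{2n+1})}-1\big)\le d(t_{2n+2})\big(e^{K(t-t_{2n+1})}-1\big)$, one obtains
\[
\langle x_i(t)-x_j(t),v\rangle\le e^{K(t-t_{2n+1})}\langle x_i(t_{2n+1})-x_j(t_{2n+1}),v\rangle+\big(e^{K(t-t_{2n+1})}-1\big)d(t_{2n+2}).
\]

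Finally I would evaluate this at $t=t_{2n+2}$: the left-hand side equals $d(t_{2n+2})$ by the choice of $v$, while $\langle x_i(t_{2n+1})-x_j(t_{2n+1}),v\rangle\le\lvert x_i(t_{2n+1})-x_j(t_{2n+1})\rvert\le d(t_{2n+1})$ since $\lvert v\rvert=1$. Writing $\mu:=e^{K(t_{2n+2}-t_{2n+1})}$, this reads $d(t_{2n+2})\le\mu\,d(t_{2n+1})+(\mu-1)d(t_{2n+2})$, i.e. $(2-\mu)d(t_{2n+2})\le\mu\,d(t_{2n+1})$; by~\eqref{tn} we have $t_{2n+2}-t_{2n+1}<(\ln2)/K$, hence $\mu<2$, and dividing by $2-\mu>0$ gives~\eqref{boundgrowth}. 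I expect the only delicate point to be the sign bookkeeping in the last two displays — the exponential factor now amplifies instead of damping, so one must consistently bound $\langle x_l(t),v\rangle$ from below in the upper estimate and from above in the lower one, and keep track of the sign of $1-e^{K(t-t_{2n+1})}$ — after which the positivity of $2-\mu$, and thus the very meaningfulness of the stated bound, is precisely the content of hypothesis~\eqref{tn}.
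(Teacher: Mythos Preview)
Your proof is correct and follows essentially the same route as the paper's: pick $i,j$ and the unit vector $v$ realizing $d(t_{2n+2})$, use Lemma~\ref{L1-1} to trap $\langle x_l(t),v\rangle$ between $m$ and $M$ on $[t_{2n+1},t_{2n+2}]$, derive the two one-sided differential inequalities from $\alpha\equiv-1$, Gronwall forward from $t_{2n+1}$, subtract, evaluate at $t_{2n+2}$, and divide by $2-\mu>0$ via~\eqref{tn}. Your handling of the trivial case $d(t_{2n+2})=0$ is in fact a touch cleaner than the paper's (which invokes~\eqref{cre} unnecessarily), and your explicit remark about the sign of $1-e^{K(t-t_{2n+1})}$ makes the subtraction step transparent.
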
 
\begin{proof}
	Let $n\in \mathbb{N}_{0}$. Let all $i,j=1,\dots,N$ be such that $$d(t_{2n+2})=\lvert x_{i}(t_{2n+2})-x_{j}(t_{2n+2})\rvert.$$
	If $d(t_{2n+2})=0$, then from \eqref{cre} also $d(t_{2n+1})=0$ and of course inequality \eqref{boundgrowth} is fulfilled. So, we can assume that $d(t_{2n+2})>0$. In this case, let $v\in \mathbb{R}^{d}$ be the so defined unit vector
	$$v=\frac{x_{i}(t_{2n+2})-x_{j}(t_{2n+2})}{\lvert x_{i}(t_{2n+2})-x_{j}(t_{2n+2})\rvert}.$$
	Then, $$d(t_{2n+2})=\langle x_{i}(t_{2n+2})-x_{j}(t_{2n+2}),v\rangle .$$
	Moreover, we set
	$$M_{t_{2n+2}}=\max_{k=1,\dots,N}\langle x_{k}(t_{2n+2}),v\rangle,$$
	$$m_{t_{2n+2}}=\min_{k=1,\dots,N}\langle x_{k}(t_{2n+2}),v\rangle.$$
	Thus, from \eqref{scalpr-1}, for all $t\in [t_{2n+1},t_{2n+2}],$ it holds
	\begin{equation}\label{dis}
		m_{t_{2n+2}}\leq\langle x_{k}(t),v\rangle\leq M_{t_{2n+2}},\quad \forall k=1,\dots,N.
	\end{equation}
Now, for all $t\in [t_{2n+1},t_{2n+2})$, using the first inequality in \eqref{dis} and the fact that $\alpha(t)=-1$, we have that
	$$\begin{array}{l}
		\vspace{0.3cm}\displaystyle{\frac{d}{dt}\langle x_{i}(t),v\rangle=-\frac{1}{N-1}\sum_{l:l\neq i}\psi(x_{i}(t),x_{l}(t))(\langle x_{l}(t),v\rangle-\langle x_{i}(t),v\rangle)}\\
		\vspace{0.3cm}\displaystyle{\hspace{2cm}\leq -\frac{1}{N-1}\sum_{l:l\neq i}\psi(x_{i}(t),x_{l}(t))(m_{t_{2n+2}}-\langle x_{i}(t),v\rangle)}\\
		\vspace{0.3cm}\displaystyle{\hspace{2cm}=\frac{1}{N-1}\sum_{l:l\neq i}\psi(x_{i}(t),x_{l}(t))(\langle x_{i}(t),v\rangle)-m_{t_{2n+2}})}\\
		\displaystyle{\hspace{2cm}\leq K(\langle x_{i}(t),v\rangle)-m_{t_{2n+2}}).}
	\end{array}$$
	Therefore, the Gronwall's inequality yields \begin{equation}\label{1}
		\begin{split}
			\langle x_{i}(t),v\rangle&\leq e^{K(t-t_{2n+1})}\langle x_{i}(t_{2n+1}),v\rangle-Km_{t_{2n+2}}\int_{t_{2n+1}}^{t}e^{K(t-s)}ds\\&=e^{K(t-t_{2n+1})}\langle x_{i}(t_{2n+1}),v\rangle+m_{t_{2n+2}}(1-e^{K(t-t_{2n+1})}).
		\end{split}
	\end{equation}
On the other hand, using the second inequality in \eqref{dis}, we get 
$$\begin{array}{l}
	\vspace{0.3cm}\displaystyle{\frac{d}{dt}\langle x_{i}(t),v\rangle=-\frac{1}{N-1}\sum_{l:l\neq j}\psi(x_{j}(t),x_{l}(t))(\langle x_{l}(t),v\rangle-\langle x_{j}(t),v\rangle)}\\
	\vspace{0.3cm}\displaystyle{\hspace{2cm}\geq -\frac{1}{N-1}\sum_{l:l\neq j}\psi(x_{j}(t),x_{l}(t))(M_{t_{2n+2}}-\langle x_{j}(t),v\rangle)}\\
	\vspace{0.3cm}\displaystyle{\hspace{2cm}=\frac{1}{N-1}\sum_{l:l\neq j}\psi(x_{j}(t),x_{l}(t))(\langle x_{j}(t),v\rangle)-M_{t_{2n+2}})}\\
	\displaystyle{\hspace{2cm}\geq K(\langle x_{j}(t),v\rangle)-M_{t_{2n+2}}).}
\end{array}$$
Hence, using the Gronwall's inequality, we can write
\begin{equation}\label{1b}
	\begin{split}
		\langle x_{j}(t),v\rangle&\geq e^{K(t-t_{2n+1})}\langle x_{j}(t_{2n+1}),v\rangle-KM_{t_{2n+2}}\int_{t_{2n+1}}^{t}e^{K(t-s)}ds\\&=e^{K(t-t_{2n+1})}\langle x_{j}(t_{2n+1}),v\rangle+M_{t_{2n+2}}(1-e^{K(t-t_{2n+1})}).
	\end{split}
\end{equation}
Thus, combining \eqref{1} and \eqref{1b}, we can conclude that, for all $t\in [t_{2n+1},t_{2n+2}]$, it holds 
$$\begin{array}{l}
    \vspace{0.3cm}\displaystyle{\langle x_{i}(t)-x_{j}(t),v\rangle=\langle x_{i}(t),v\rangle-\langle x_{j}(t),v\rangle}\\
	\vspace{0.3cm}\displaystyle{\hspace{2.9cm}\leq e^{K(t-t_{2n+1})}\langle x_{i}(t_{2n+1}),v\rangle+m_{t_{2n+2}}(1-e^{K(t-t_{2n+1})})}\\
	\vspace{0.3cm}\displaystyle{\hspace{3.4cm}-e^{K(t-t_{2n+1})}\langle x_{j}(t_{2n+1}),v\rangle-M_{t_{2n+2}}(1-e^{K(t-t_{2n+1})})}\\	\vspace{0.3cm}\displaystyle{\hspace{2.9cm}\leq e^{K(t-t_{2n+1})}\langle x_{i}(t_{2n+1})-x_{j}(t_{2n+1}),v\rangle+(m_{t_{2n+2}}-M_{t_{2n+2}})(1-e^{K(t-t_{2n+1})})}\\	\vspace{0.3cm}\displaystyle{\hspace{2.9cm}=e^{K(t-t_{2n+1})}\langle x_{i}(t_{2n+1})-x_{j}(t_{2n+1}),v\rangle+(M_{t_{2n+2}}-m_{t_{2n+2}})(e^{K(t-t_{2n+1})}-1)}\\	\displaystyle{\hspace{2.9cm}\leq e^{K(t-t_{2n+1})}d(t_{2n+1})+d(t_{2n+2})(e^{K(t-t_{2n+1})}-1).}
\end{array}$$
For $t=t_{2n+2}$, we obtain $$
	d(t_{2n+2})=\langle x_{i}(t_{2n+2})-x_{j}(t_{2n+2}),v\rangle\leq 
	e^{K(t_{2n+2}-t_{2n+1})}d(t_{2n+1})+d(t_{2n+2})(e^{K(t_{2n+2}-t_{2n+1})}-1),
$$
from which
$$d(t_{2n+2})(2-e^{K(t_{2n+2}-t_{2n+1})})\leq e^{K(t_{2n+2}-t_{2n+1})}d(t_{2n+1}).$$
Thus, since $2-e^{K(t_{2n+2}-t_{2n+1})}>0$ from \eqref{tn}, we have that
$$d(t_{2n+2})\leq \frac{e^{K(t_{2n+2}-t_{2n+1})}}{2-e^{K(t_{2n+2}-t_{2n+1})}}d(t_{2n+1}),$$i.e. \eqref{boundgrowth} is proved.
\end{proof}
\section{Proof of Theorem \ref{cons}}\label{consensus_sec}
	
	\setcounter{equation}{0}
	
	\begin{proof}[Proof of Theorem \ref{cons}]
		Let $\{x_{i}\}_{i=1,\dots,N}$ be solution to \eqref{onoff}, \eqref{incond}. Then, for all $n\in \mathbb{N}_{0}$, using \eqref{n-2}, \eqref{cn} and \eqref{boundgrowth}we have that
		$$d(t_{2n+2})\leq \frac{e^{K(t_{2n+2}-t_{2n+1})}}{2-e^{K(t_{2n+2}-t_{2n+1})}}d(t_{2n+1})$$
		$$\leq \frac{e^{K(t_{2n+2}-t_{2n+1})}}{2-e^{K(t_{2n+2}-t_{2n+1})}}\max\left\{1-e^{-K(t_{2n+1}-t_{2n})},1-\frac{\psi_{0}}{K}(1-e^{-K(t_{2n+1}-t_{2n})})\right\}d(t_{2n}).$$
		Thus, using an induction argument, we get
		$$d(t_{2n+2})\leq \prod_{p=0}^{n}\left(\frac{e^{K(t_{2p+2}-t_{2p+1})}}{2-e^{K(t_{2p+2}-t_{2p+1})}}\max\left\{1-e^{-K(t_{2p+1}-t_{2p})},1-\frac{\psi_{0}}{K}(1-e^{-K(t_{2p+1}-t_{2p})})\right\}\right)d(0)$$
		$$=e^{\sum_{p=0}^{\infty}\ln \left(\frac{e^{K(t_{2p+2}-t_{2p+1})}}{2-e^{K(t_{2p+2}-t_{2p+1})}}\max\left\{1-e^{-K(t_{2p+1}-t_{2p})},1-\frac{\psi_{0}}{K}(1-e^{-K(t_{2p+1}-t_{2p})})\right\}\right)}d(0)$$
		$$=e^{\sum_{p=0}^{\infty}\left[\ln \left(\frac{e^{K(t_{2p+2}-t_{2p+1})}}{2-e^{K(t_{2p+2}-t_{2p+1})}}\right)+\ln\left(\max\left\{1-e^{-K(t_{2p+1}-t_{2p})},1-\frac{\psi_{0}}{K}(1-e^{-K(t_{2p+1}-t_{2p})})\right\}\right)\right]}d(0)$$
		Now, $\sum_{p=0}^{\infty}\ln \left(\frac{e^{K(t_{2p+2}-t_{2p+1})}}{2-e^{K(t_{2p+2}-t_{2p+1})}}\right)<+\infty$ from \eqref{sum1}. Then, the solution $\{x_{i}\}_{i=1,\dots,N}$ converges to consensus since the following condition is satisfied from \eqref{sum2}:
		$$\sum_{p=0}^{\infty}\ln\left(\max\left\{1-e^{-K(t_{2p+1}-t_{2p})},1-\frac{\psi_{0}}{K}(1-e^{-K(t_{2p+1}-t_{2p})})\right\}\right)=-\infty.$$
	\end{proof}
Under a stronger condition on the sequence $\{t_{n}\}_{n}$, the following exponential convergence to consensus holds.
\begin{thm}\label{expcons}
	Let $\psi:\RR^d\times \RR^d\rightarrow\RR$ be a positive, bounded, continuous function. Assume that the sequence $\{t_{n}\}_{n}$ of definition \eqref{alpha1} satisfies \eqref{tn}. Assume \eqref{bad} and that the following condition holds:
	\begin{equation}\label{addcond1}
		\sup_{n\in \mathbb{N}}\left(\frac{e^{K(t_{2n+2}-t_{2n+1})}}{2-e^{K(t_{2n+2}-t_{2n+1})}}\max\left\{1-e^{-K(t_{2n+1}-t_{2n})},1-\frac{\psi_{0}}{K}(1-e^{-K(t_{2n+1}-t_{2n})})\right\}\right)=c<1.
	\end{equation}
	 Then, every solution $\{x_{i}\}_{i=1,\dots,N}$ to \eqref{onoff} with the initial conditions \eqref{incond} satisfies the following exponential decay estimate
	 \begin{equation}\label{expest}
	 	d(t)\leq e^{-\gamma\left(t-\frac{\ln2}{K}-T\right) }d(0),\quad \forall t\geq 0,
	 \end{equation}
	 for two suitable positive constants $\gamma$ and $T$, independent of $N$.
\end{thm}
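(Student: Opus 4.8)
The plan is to exploit the machinery already set up in Sections 3--4, in particular the iterated estimate
$$
d(t_{2n+2}) \le \prod_{p=0}^{n} c_p \, d(0), \qquad
c_p := \frac{e^{K(t_{2p+2}-t_{2p+1})}}{2-e^{K(t_{2p+2}-t_{2p+1})}}\max\left\{1-e^{-K(t_{2p+1}-t_{2p})},\, 1-\tfrac{\psi_0}{K}(1-e^{-K(t_{2p+1}-t_{2p})})\right\},
$$
and upgrade it to a geometric decay using the hypothesis $\sup_n c_n = c < 1$. Concretely, from \eqref{addcond1} we immediately obtain $d(t_{2n+2}) \le c^{n+1} d(0)$, which is exponential decay along the subsequence of times $\{t_{2n+2}\}$. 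The remaining work is to convert ``decay at the grid points $t_{2n}$'' into ``decay for every $t \ge 0$'', at the price of the constants $\gamma$ and $T$.

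First I would control the \emph{intermediate} behaviour on each pair of consecutive intervals. On a good interval $[t_{2n},t_{2n+1}]$ Lemma \ref{L2} (estimate \eqref{dist}) gives $d(t) \le d(t_{2n})$ for all $t$ in that interval, so nothing is lost there. On a bad interval $[t_{2n+1},t_{2n+2}]$ the diameter can grow, but only by the bounded factor appearing in \eqref{boundgrowth}; since \eqref{bad} forces $t_{2n+2}-t_{2n+1}\to 0$, this factor is bounded, say by some constant $A \ge 1$ (indeed \eqref{tn} gives $\tfrac{e^{K(t_{2n+2}-t_{2n+1})}}{2-e^{K(t_{2n+2}-t_{2n+1})}} \le A$ uniformly). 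Hence for any $t \in [t_{2n},t_{2n+2}]$ one has $d(t) \le A\, d(t_{2n}) \le A\, c^{n} d(0)$. This already yields exponential decay \emph{in the index $n$}; the issue is purely to translate the index $n$ into the continuous time $t$.

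Next I would bound $n$ from below in terms of $t$. Here the boundedness hypothesis on the bad intervals (from \eqref{tn}, each $t_{2p+2}-t_{2p+1} < \tfrac{\ln 2}{K}$) together with the normalization in Remark \ref{boundpsi} (each good interval may be taken of length at most $T_0$ for some fixed $T_0$) gives a uniform upper bound $L := \tfrac{\ln 2}{K} + T_0$ on the length of every single interval $[t_k,t_{k+1}]$, so that $t_{2n+2} \le (2n+2)L$, i.e. $t \le (2n+2)L$ whenever $t \in [t_{2n},t_{2n+2}]$, which rearranges to $n \ge \tfrac{t}{2L} - 1$. Combining with $d(t) \le A c^{n} d(0)$ and writing $c = e^{-\lambda}$ with $\lambda := -\ln c > 0$, we get $d(t) \le A e^{-\lambda(\frac{t}{2L}-1)} d(0) = A e^{\lambda} e^{-\frac{\lambda}{2L} t} d(0)$. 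Setting $\gamma := \tfrac{\lambda}{2L} = \tfrac{-\ln c}{2L}$ and absorbing the constant $A e^{\lambda}$ into an exponential shift — choosing $T$ (together with the $\tfrac{\ln 2}{K}$ already displayed in \eqref{expest}) so that $e^{\gamma(\frac{\ln 2}{K}+T)} \ge A e^{\lambda}$ — yields precisely $d(t) \le e^{-\gamma(t - \frac{\ln 2}{K} - T)} d(0)$ for all $t \ge 0$. All constants depend only on $K$, $\psi_0$, $c$ and the uniform interval length $L$, hence are independent of $N$.

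The only genuinely delicate point is the bookkeeping in the last step: one must be careful that the uniform bound on interval lengths (which is what makes $n \gtrsim t$) is legitimately available. It is, because \eqref{bad} gives $t_{2n+2}-t_{2n+1} \to 0$ hence a uniform bound $\le \tfrac{\ln 2}{K}$ on bad intervals by \eqref{tn}, while the good intervals can be split as in Remark \ref{boundpsi} without affecting any of the hypotheses — in particular \eqref{addcond1} is stable under such splitting because on a pure sub-good-interval the corresponding factor is of the form $\tfrac{1}{1}\cdot(1-e^{-K\cdot(\text{length})}) < 1$ and the supremum only has to be re-examined over the (still uniformly bounded) new family, so one may simply assume from the outset that $\sup_k(t_{k+1}-t_k) \le L$. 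Everything else is Gronwall-type estimates already proved. I would therefore present the argument as: (i) recall $d(t_{2n+2}) \le c^{n+1}d(0)$; (ii) interpolate via Lemmas \ref{L2} and the growth Lemma \eqref{boundgrowth} to get $d(t) \le A c^{n}d(0)$ on $[t_{2n},t_{2n+2}]$; (iii) use $t \le (2n+2)L$ to replace $n$ by $t$; (iv) collect constants into $\gamma$ and $T$.
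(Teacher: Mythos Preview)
Your approach is essentially the paper's. The one refinement there is that on a bad interval the paper invokes Lemma~\ref{L2-1} (estimate \eqref{dist-1}) to get $d(t)\le d(t_{2n+2})\le c\,d(t_{2n})\le d(t_{2n})$ directly, so the auxiliary growth constant $A$ never appears and the displayed form \eqref{expest} falls out with $T$ taken to be the uniform bound on the good-interval lengths.
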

\begin{oss}
	The assumption \eqref{addcond1} implies  \eqref{sum2}. Indeed, if \eqref{addcond1} holds, 
	$$\sum_{p=0}^{\infty}\ln \left(\frac{e^{K(t_{2p+2}-t_{2p+1})}}{2-e^{K(t_{2p+2}-t_{2p+1})}}\max\left\{1-e^{-K(t_{2p+1}-t_{2p})},1-\frac{\psi_{0}}{K}(1-e^{-K(t_{2p+1}-t_{2p})})\right\}\right)$$$$\leq \sum_{p=0}^{\infty}\ln c=-\infty,$$
	i.e. \begin{equation}\label{sumtot}
		\sum_{p=0}^{\infty}\ln \left(\frac{e^{K(t_{2p+2}-t_{2p+1})}}{2-e^{K(t_{2p+2}-t_{2p+1})}}\max\left\{1-e^{-K(t_{2p+1}-t_{2p})},1-\frac{\psi_{0}}{K}(1-e^{-K(t_{2p+1}-t_{2p})})\right\}\right)=-\infty.
	\end{equation}
	Therefore, being $\frac{e^{K(t_{2p+2}-t_{2p+1})}}{2-e^{K(t_{2p+2}-t_{2p+1})}}>1$, it comes that
		$$\sum_{p=0}^{\infty}\ln \left(\max\left\{1-e^{-K(t_{2p+1}-t_{2p})},1-\frac{\psi_{0}}{K}(1-e^{-K(t_{2p+1}-t_{2p})})\right\}\right)$$$$\leq\sum_{p=0}^{\infty}\ln \left(\frac{e^{K(t_{2p+2}-t_{2p+1})}}{2-e^{K(t_{2p+2}-t_{2p+1})}}\max\left\{1-e^{-K(t_{2p+1}-t_{2p})},1-\frac{\psi_{0}}{K}(1-e^{-K(t_{2p+1}-t_{2p})})\right\}\right).$$
		Then, from \eqref{sumtot}, the condition \eqref{sum2} is satisfied. 
	\end{oss}
		\begin{proof}[Proof of Theorem \ref{expcons}]
			Let $\{x_{i}\}_{i=1,\dots,N}$ be solution to \eqref{onoff}, \eqref{incond}. Then, since \eqref{bad} holds, for all $n\in \mathbb{N}_{0}$, 	$$d(t_{2n+2})\leq \frac{e^{K(t_{2n+2}-t_{2n+1})}}{2-e^{K(t_{2n+2}-t_{2n+1})}}\max\left\{1-e^{-K(t_{2n+1}-t_{2n})},1-\frac{\psi_{0}}{K}(1-e^{-K(t_{2n+1}-t_{2n})})\right\}d(t_{2n}).$$
			Therefore, using \eqref{addcond1}, we get \begin{equation}\label{2p}
				d(t_{2n+2})\leq cd(t_{2n}),
			\end{equation}
			with $c\in (0,1)$. As a consequence, using an induction argument,
			\begin{equation}\label{diamin}
				d(t_{2n})\leq c^{n}d(0),\quad \forall n\in \mathbb{N}_{0}.
			\end{equation}
			Now, let $t\geq 0$. Then, there exists $n\in \mathbb{N}_{0}$ such that $t\in [t_{2n},t_{2n+2})$. Thus, if $t\in [t_{2n},t_{2n+1}]$, using \eqref{dist}, 
			$$d(t)\leq d(t_{2n}).$$
			On the other hand, if $t\in (t_{2n+1},t_{2n+2})$, from \eqref{dist-1} and \eqref{2p} we get
			$$d(t)\leq d(t_{2n+2})\leq cd(t_{2n}).$$
			Therefore, being $c<1$, in both cases
			$$d(t)\leq d(t_{2n}).$$
			So, using \eqref{diamin}, we can write
			$$d(t)\leq c^{n}d(0)=e^{-n\ln \left(\frac{1}{c}\right)}d(0).$$
			At this point, we distinguish two different situations. 
			\\Case I) Assume that $$T:=\sup_{n\in \mathbb{N}_0}(t_{2n+1}-t_{2n})<+\infty.$$
			So setting 
			$$\gamma:=\ln \left(\frac{1}{c}\right)\frac{1}{\frac{\ln2}{K}+T},$$
			it comes that $$d(t)\leq e^{-n\gamma\left(\frac{\ln2}{K}+T\right)}$$
			Now, using \eqref{tn}, it holds that $t_{2n+2}\leq (n+1)\left(\frac{\ln2}{K}+T\right)$. Thus, being $t\leq t_{2n+2}$, we can conclude that $$d(t)\leq e^{-\gamma\left(t-\frac{\ln2}{K}-T\right)}d(0),$$
			which proves \eqref{expest}.
			\\Case II) Assume that $$\sup_{n\in \mathbb{N}}(t_{2n+1}-t_{2n})=+\infty.$$
			We pick $\tilde{T}>0$. Without loss of generality, eventually splitting the intervals in which the weight function $\alpha=1$ in subintervals of length at most $\tilde{T}$, we can assume that
			\begin{equation}\label{T}
				t_{2n+1}-t_{2n}\leq \tilde{T},\quad \forall n\in \mathbb{N}_0.
			\end{equation}
		Then, setting $$\gamma:=\ln\left(\frac{1}{c}\right)\frac{1}{\frac{\ln2}{K}+\tilde{T}}.$$
		Then, reasoning as in the previous case, we get that $$d(t)\leq e^{-\gamma\left(t-\frac{\ln2}{K}-\tilde{T}\right)}d(0),$$
		which proves \eqref{expest}.
		\end{proof}
	\section{A particular case}
	In this section, we analyze a Hegselmann-Krause model of the type \eqref{onoff} with communication rates $a_{ij}$ of the form
	\begin{equation}\label{infnotgen}
		a_{ij}(t):=\frac{1}{N-1}\tilde{\psi}(\lvert x_{i}(t)-x_{j}(t)\rvert),
	\end{equation}
where $\tilde{\psi}:\RR\rightarrow\RR$ is a positive, continuous, and bounded function whose supremum norm is given by
$$\tilde{K}:=\lVert\tilde{\psi}\rVert_{\infty}.$$
A class of functions that is included in this analysis is the one of all continuous, nonincreasing functions $\tilde{\psi}$ satisfying
$$\tilde{\psi}(x)\to 0,\quad \text{as }x\to +\infty,$$ 
that is the class of functions more often appearing in the applications.
\\The following theorem guarantees the asymptotic consensus for the solution of the Hegselmann-Krause type model \eqref{onoff} with the initial conditions \eqref{incond} and communication rates given by \eqref{infnotgen}. 
\begin{thm}\label{consnotgen}
	Let $\tilde{\psi}:\RR\rightarrow\RR$ be a positive, bounded, continuous function. Assume that the sequence $\{t_{n}\}_{n}$ of definition \eqref{alpha1} satisfies \begin{equation}\label{tntilde}
		t_{2n+2}-t_{2n+1}<\frac{\ln 2}{\tilde K},\quad \forall n\in \mathbb{N}_0.
	\end{equation}
Moreover, assume that the two following conditions hold:
	\begin{equation}\label{sum1tilde}
		\sum_{p=0}^{\infty}\ln\left(\frac{e^{\tilde K(t_{2p+2}-t_{2p+1})}}{2-e^{\tilde K(t_{2p+2}-t_{2p+1})}}\right)<+\infty,
	\end{equation}
	\begin{equation}\label{sum2nob}
		\sum_{p=0}^{\infty}\ln\left(\max\left\{1-e^{-\tilde K(t_{2p+1}-t_{2p})},1-\frac{\tilde{\psi}_{0}}{\tilde K}(1-e^{-\tilde K(t_{2p+1}-t_{2p})})\right\}\right)=-\infty.
	\end{equation} 
where \begin{equation}\label{tildepsi}
	\tilde{\psi}_{0}:=\min_{\lvert y\rvert\leq M^{0}}\tilde\psi(y),
\end{equation}
being
\begin{equation}\label{boundunif}
	\tilde M^{0}:=e^{\sum_{p=0}^{\infty}\ln \left(\frac{e^{\tilde K(t_{2n+2}-t_{2n+1})}}{2-e^{\tilde K(t_{2n+2}-t_{2n+1})}}\right)}d(0).
\end{equation}
Then, every solution $\{x_{i}\}_{i=1,\dots,N}$ to \eqref{onoff} with the initial conditions \eqref{incond} converges to consensus.
\end{thm}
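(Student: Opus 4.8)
The plan is to run the argument of Theorem~\ref{cons} almost verbatim, replacing everywhere $K$ by $\tilde{K}$, the influence function $\psi(x_{i},x_{j})$ by $\tilde{\psi}(\lvert x_{i}-x_{j}\rvert)$, and the constant $\psi_{0}$ by $\tilde{\psi}_{0}$. All the preliminary results whose proofs use only the positivity and the upper bound of the influence function carry over with no change: the scalar-product estimates of Lemmas~\ref{L1} and~\ref{L1-1}, the diameter estimates of Lemmas~\ref{L2} and~\ref{L2-1} (in particular the monotonicity \eqref{dec} on good intervals), Proposition~\ref{L4}, and the growth bound \eqref{boundgrowth} on bad intervals, which under \eqref{tntilde} reads $d(t_{2n+2})\le \frac{e^{\tilde{K}(t_{2n+2}-t_{2n+1})}}{2-e^{\tilde{K}(t_{2n+2}-t_{2n+1})}}d(t_{2n+1})$. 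What genuinely has to be redone is the uniform lower bound on the influence function, i.e.\ the analogue of Proposition~\ref{14nov}.

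The key observation is that, since $\tilde{\psi}$ is evaluated at the distances $\lvert x_{i}(t)-x_{j}(t)\rvert$, the relevant a priori bound is now on the diameter rather than on the individual positions. First I would note that \eqref{sum1tilde} guarantees (exactly as in the first Remark after Theorem~\ref{cons}) that $\sum_{p=0}^{\infty}(t_{2p+2}-t_{2p+1})<+\infty$, so that $\tilde{M}^{0}$ in \eqref{boundunif} is finite and $\tilde{\psi}_{0}$ in \eqref{tildepsi} is a well-defined positive constant. Then, combining \eqref{dec} with the bad-interval growth estimate gives $d(t_{2n+2})\le \frac{e^{\tilde{K}(t_{2n+2}-t_{2n+1})}}{2-e^{\tilde{K}(t_{2n+2}-t_{2n+1})}}d(t_{2n})$ for every $n$, and iterating, since every factor exceeds $1$,
\[
d(t_{2n})\le \prod_{p=0}^{n-1}\frac{e^{\tilde{K}(t_{2p+2}-t_{2p+1})}}{2-e^{\tilde{K}(t_{2p+2}-t_{2p+1})}}\,d(0)\le \tilde{M}^{0},\qquad \forall n\in\mathbb{N}_{0}.
\]
Using Lemmas~\ref{L2} and~\ref{L2-1} to control $d(t)$ on $[t_{2n},t_{2n+1}]$ and on $[t_{2n+1},t_{2n+2}]$ by $\max\{d(t_{2n}),d(t_{2n+2})\}$, this yields the uniform diameter bound $d(t)\le \tilde{M}^{0}$ for all $t\ge0$. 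Consequently $\lvert x_{i}(t)-x_{j}(t)\rvert\le \tilde{M}^{0}$ for all $i,j$ and all $t$, whence $\tilde{\psi}(\lvert x_{i}(t)-x_{j}(t)\rvert)\ge \tilde{\psi}_{0}$ for all $t\ge0$, which is precisely the analogue of Proposition~\ref{14nov}.

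With this lower bound in hand, the analogue of Proposition~\ref{L5} goes through word for word: on each good interval $[t_{2n},t_{2n+1}]$ one obtains $d(t_{2n+1})\le \tilde{C}_{2n}\,d(t_{2n})$ with $\tilde{C}_{2n}:=\max\{1-e^{-\tilde{K}(t_{2n+1}-t_{2n})},\,1-\frac{\tilde{\psi}_{0}}{\tilde{K}}(1-e^{-\tilde{K}(t_{2n+1}-t_{2n})})\}\in(0,1)$. Chaining this with the bad-interval bound and iterating exactly as in the proof of Theorem~\ref{cons} gives
\[
d(t_{2n+2})\le \prod_{p=0}^{n}\left(\frac{e^{\tilde{K}(t_{2p+2}-t_{2p+1})}}{2-e^{\tilde{K}(t_{2p+2}-t_{2p+1})}}\;\tilde{C}_{2p}\right)d(0),
\]
and, taking logarithms, the product tends to $0$ because the series $\sum_{p}\ln\!\bigl(e^{\tilde{K}(t_{2p+2}-t_{2p+1})}/(2-e^{\tilde{K}(t_{2p+2}-t_{2p+1})})\bigr)$ converges by \eqref{sum1tilde} while $\sum_{p}\ln \tilde{C}_{2p}=-\infty$ by \eqref{sum2nob}. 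Since $d(t)\le \max\{d(t_{2n}),d(t_{2n+2})\}$ on $[t_{2n},t_{2n+2}]$, this forces $d(t)\to0$, i.e.\ convergence to consensus. The only genuinely delicate point is the apparent circularity in this last paragraph: $\tilde{\psi}_{0}$ is needed to control the diameter through Proposition~\ref{L5}, but $\tilde{\psi}_{0}$ is itself defined through a diameter bound. This is resolved exactly as in Proposition~\ref{14nov}: the $\tilde{\psi}_{0}$-free estimates \eqref{dec} and \eqref{boundgrowth} already suffice to produce the a priori bound $d(t)\le\tilde{M}^{0}$, and only afterwards does the sharper contraction of Proposition~\ref{L5} enter. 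Everything else is a mechanical transcription of Sections~3 and~4 with $\tilde{K}$, $\tilde{\psi}$, $\tilde{\psi}_{0}$ in place of $K$, $\psi$, $\psi_{0}$.
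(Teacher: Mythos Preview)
Your proposal is correct and follows essentially the same approach as the paper's own proof: carry over the preliminary lemmas with $\tilde K$ in place of $K$, use \eqref{dec} and the bad-interval growth bound to obtain the uniform diameter estimate $d(t)\le\tilde M^{0}$ (which replaces the position bound of Proposition~\ref{14nov}), deduce the lower bound $\tilde\psi(\lvert x_i(t)-x_j(t)\rvert)\ge\tilde\psi_0$, and then conclude via the analogues of Proposition~\ref{L5} and Theorem~\ref{cons}. Your explicit discussion of the apparent circularity and its resolution is exactly the point the paper handles by first obtaining $d(t_{2n})\le\tilde M^{0}$ from the $\tilde\psi_0$-free estimates before invoking the sharper contraction.
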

\begin{proof}
	Using analogous arguments to the ones employed in the more general case examined in the previous sections, all the results from Lemma \ref{L1} to Proposition \ref{L4} of Section 2 are still valid, up to substituting the positive constant $K$ with the constant of boundness of the now considered influence function $\tilde{K}$. Therefore, for all $n\in \mathbb{N}_{0}$, from \eqref{dist-1} and \eqref{boundgrowth} we have that
	$$d(t_{2n+2})\leq \frac{e^{\tilde K(t_{2n+2}-t_{2n+1})}}{2-e^{\tilde K(t_{2n+2}-t_{2n+1})}}d(t_{2n+1})\leq \frac{e^{\tilde K(t_{2n+2}-t_{2n+1})}}{2-e^{\tilde K(t_{2n+2}-t_{2n+1})}}d(t_{2n}).$$
	Then, $$d(t_{2n})\leq \prod_{p=0}^{n}\left(\frac{e^{\tilde K(t_{2n+2}-t_{2n+1})}}{2-e^{\tilde K(t_{2n+2}-t_{2n+1})}}\right)d(0)=e^{\sum_{p=0}^{n}\ln \left(\frac{e^{\tilde K(t_{2n+2}-t_{2n+1})}}{2-e^{\tilde K(t_{2n+2}-t_{2n+1})}}\right)}d(0)$$$$\leq e^{\sum_{p=0}^{\infty}\ln \left(\frac{e^{\tilde K(t_{2n+2}-t_{2n+1})}}{2-e^{\tilde K(t_{2n+2}-t_{2n+1})}}\right)}d(0).$$
	We have so proven that
	\begin{equation}
		d(t_{2n})\leq \tilde M^{0},\quad \forall n \in \mathbb{N}_{0}.
	\end{equation}
	As a consequence, for all $t\geq 0$, since $t\in [t_{2n},t_{2n+2})$, for some $n\in \mathbb{N}_{0}$, using \eqref{dist} and \eqref{dist-1} it comes that 
	$$d(t)\leq \tilde M^{0}.$$
	Thus, for all $i,j=1,\dots,N$
	$$\lvert x_{i}(t)-x_{j}(t)\rvert\leq d(t)\leq \tilde M^{0},$$
	from which
	\begin{equation}
		\psi(\lvert x_{i}(t)-x_{j}(t)\rvert)\geq \tilde{\psi}^{0}.
	\end{equation}
Now, we repeat the proof of Lemma \ref{L5} up to substituting $\psi_{0}$ with the constant $\tilde{\psi}_{0}$. To be precise, we have that
$$d(t_{2n+1})\leq \max\left\{1-e^{-\tilde K(t_{2n+1}-t_{2n})},1-\frac{\tilde{\psi}_{0}}{\tilde K}(1-e^{-\tilde K(t_{2n+1}-t_{2n})})\right\}d(t_{2n}),$$
where $\max\left\{1-e^{-\tilde K(t_{2n+1}-t_{2n})},1-\frac{\tilde{\psi}_{0}}{\tilde K}(1-e^{-\tilde K(t_{2n+1}-t_{2n})})\right\}\in (0,1)$.
\\Then, taking into account of the conditions \eqref{sum1tilde} and \eqref{sum2nob}, the proof follows analogously to the one of Theorem \ref{cons}.
\end{proof}
Again, with an additional condition on the sequence $\{t_{n}\}_{n}$, one can prove the following exponential convergence to consensus result. We omit the proof since it is analogous to the one of Theorem \ref{expcons}.
\begin{thm}
	Let $\psi:\RR\rightarrow\RR$ be a positive, bounded, continuous function. Assume that the sequence $\{t_{n}\}_{n}$ of definition \eqref{alpha1} satisfies \eqref{tntilde}. Assume also that the following conditions hold:
	\begin{equation}\label{badtilde}
		\sum_{p=0}^{+\infty}(t_{2p+2}-t_{2p+1})<+\infty,
	\end{equation}
	\begin{equation}\label{addcond2}
		\sup_{n\in \mathbb{N}}\left(\frac{e^{\tilde K(t_{2n+2}-t_{2n+1})}}{2-e^{\tilde K(t_{2n+2}-t_{2n+1})}}\max\left\{1-e^{-\tilde K(t_{2n+1}-t_{2n})},1-\frac{\tilde{\psi}_{0}}{\tilde K}(1-e^{-\tilde K(t_{2n+1}-t_{2n})})\right\}\right)=c<1,
	\end{equation}
where $\tilde{\psi}_0$ is the positive constant in \eqref{tildepsi}. Then, every solution $\{x_{i}\}_{i=1,\dots,N}$ to \eqref{onoff} with the initial conditions \eqref{incond} satisfies the following exponential decay estimate
	\begin{equation}\label{expestnb}
		d(t)\leq e^{-\gamma\left(t-\frac{\ln2}{\tilde K}-T\right) }d(0),\quad \forall t\geq 0,
	\end{equation}
	for two suitable positive constants $\gamma$, independent of $N$, and $T$.
\end{thm}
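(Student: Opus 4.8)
\emph{Proof plan.} The plan is to run the argument of Theorem~\ref{expcons} verbatim, replacing the constant $K$ by $\tilde K$ and $\psi_{0}$ by $\tilde\psi_{0}$ everywhere, and using as inputs the particular–case analogues of the preliminary estimates already set up in the proof of Theorem~\ref{consnotgen}. First I would check that \eqref{tntilde} together with \eqref{badtilde} is enough to make the quantity $\tilde M^{0}$ of \eqref{boundunif} finite: since $\ln\frac{e^{x}}{2-e^{x}}=2x+O(x^{2})$ as $x\to 0$ and $t_{2p+2}-t_{2p+1}\to 0$ by \eqref{badtilde}, the series $\sum_{p}\ln\frac{e^{\tilde K(t_{2p+2}-t_{2p+1})}}{2-e^{\tilde K(t_{2p+2}-t_{2p+1})}}$ converges, so iterating the growth bound \eqref{boundgrowth} and using \eqref{dist}, \eqref{dist-1} gives $d(t)\le \tilde M^{0}$ for all $t\ge 0$; hence $\tilde\psi_{0}$ in \eqref{tildepsi} is a well-defined positive constant and $\tilde\psi(\lvert x_{i}(t)-x_{j}(t)\rvert)\ge \tilde\psi_{0}$ along every solution. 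With this lower bound in force, the particular–case versions of Proposition~\ref{L5} and of the growth lemma \eqref{boundgrowth} give, for every $n\in\mathbb{N}_{0}$, the one–cycle estimates $d(t_{2n+1})\le \max\bigl\{1-e^{-\tilde K(t_{2n+1}-t_{2n})},\,1-\tfrac{\tilde\psi_{0}}{\tilde K}(1-e^{-\tilde K(t_{2n+1}-t_{2n})})\bigr\}\,d(t_{2n})$ on the good interval and $d(t_{2n+2})\le \frac{e^{\tilde K(t_{2n+2}-t_{2n+1})}}{2-e^{\tilde K(t_{2n+2}-t_{2n+1})}}\,d(t_{2n+1})$ on the bad one.

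Composing these two estimates and invoking \eqref{addcond2} yields the one–cycle contraction $d(t_{2n+2})\le c\,d(t_{2n})$ with $c\in(0,1)$, whence $d(t_{2n})\le c^{\,n}d(0)$ for all $n$ by induction. For an arbitrary $t\ge 0$ I would pick $n$ with $t\in[t_{2n},t_{2n+2})$ and combine \eqref{dist}, \eqref{dist-1} with $d(t_{2n+2})\le c\,d(t_{2n})\le d(t_{2n})$ to deduce $d(t)\le d(t_{2n})\le c^{\,n}d(0)=e^{-n\ln(1/c)}d(0)$. It then remains to turn this geometric decay along $\{t_{2n}\}$ into exponential decay in the variable $t$, which requires a linear bound $t_{2n+2}\le (n+1)\ell$ for some $\ell>0$. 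By \eqref{tntilde} each bad interval has length $<\ln 2/\tilde K$; if $\sup_{n}(t_{2n+1}-t_{2n})=:T<+\infty$ one may take $\ell=\ln 2/\tilde K+T$, whereas if the good intervals are unbounded one fixes $\tilde T>0$ and — exactly as in the second case of the proof of Theorem~\ref{expcons} — splits each interval on which $\alpha\equiv 1$ into subintervals of length at most $\tilde T$ (separated by degenerate bad intervals of length $0$), taking $\ell=\ln 2/\tilde K+\tilde T$ and $T=\tilde T$. Then $t\le t_{2n+2}\le (n+1)\ell$ gives $n\ge t/\ell-1$, and with $\gamma:=\ln(1/c)/\ell$ one gets $d(t)\le e^{-\gamma(t-\ell)}d(0)$, which is \eqref{expestnb}. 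Since $c$, hence $\gamma$ and $\ell$, depends only on $\{t_{n}\}$, $\tilde K$ and $\tilde\psi_{0}$, and $\tilde\psi_{0}$ only on the $N$–independent bound $\tilde M^{0}$, the constants $\gamma$ and $T$ are independent of $N$.

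The step I expect to demand the most care is the \emph{without loss of generality} reduction to uniformly bounded good intervals: after inserting extra partition points one must verify that an estimate of the form \eqref{addcond2} still holds, with a possibly larger constant $c'<1$. This hinges on the monotonicity in $\delta$ of $\rho(\delta):=\max\bigl\{1-e^{-\tilde K\delta},\,1-\tfrac{\tilde\psi_{0}}{\tilde K}(1-e^{-\tilde K\delta})\bigr\}$: a degenerate bad interval has amplification factor $1$, so each newly created subcycle contributes $1\cdot\rho(\delta)\le\rho(\tilde T)<1$, while a genuine bad interval of length $\beta<\ln 2/\tilde K$ followed by the first subpiece of length $\delta\le L$ of an original good interval of length $L$ contributes $\frac{e^{\tilde K\beta}}{2-e^{\tilde K\beta}}\rho(\delta)\le\frac{e^{\tilde K\beta}}{2-e^{\tilde K\beta}}\rho(L)\le c$; hence $c'=\max\{c,\rho(\tilde T)\}<1$ works in place of $c$. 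All the remaining ingredients — the analogues of Lemmas~\ref{L1}, \ref{L2}, \ref{L1-1}, \ref{L2-1}, of Propositions~\ref{14nov}, \ref{L4}, \ref{L5}, and the telescoping bookkeeping — are obtained exactly as in Sections~3--4 and in the proof of Theorem~\ref{consnotgen}, upon replacing $K$ by $\tilde K$ and $\psi_{0}$ by $\tilde\psi_{0}$.
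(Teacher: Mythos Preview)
Your plan is exactly what the paper does: the proof is omitted there with the remark that it is ``analogous to the one of Theorem~\ref{expcons}'', and you are rerunning that argument with $K\to\tilde K$, $\psi_0\to\tilde\psi_0$, feeding in the particular-case preliminaries already set up in the proof of Theorem~\ref{consnotgen}. The derivation of the one-cycle contraction $d(t_{2n+2})\le c\,d(t_{2n})$, the induction $d(t_{2n})\le c^n d(0)$, the interpolation via \eqref{dist}--\eqref{dist-1}, and the two-case handling of the good-interval lengths all match the paper's Theorem~\ref{expcons} verbatim.

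One point deserves a correction. In your justification of the Case~II reduction you invoke ``the monotonicity in $\delta$ of $\rho(\delta)$'', but $\rho$ is \emph{not} monotone: writing $u=1-e^{-\tilde K\delta}$ one has $\rho=\max\{u,\,1-(\tilde\psi_0/\tilde K)u\}$, which is decreasing for $u<\tilde K/(\tilde K+\tilde\psi_0)$ and increasing afterwards; in particular $\rho(\delta)\to 1$ as $\delta\to 0^+$, so neither $\rho(\delta)\le\rho(\tilde T)$ nor $\rho(\delta)\le\rho(L)$ holds for small $\delta$. This means your bound $c'=\max\{c,\rho(\tilde T)\}$ does not follow as written. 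The paper's own proof of Theorem~\ref{expcons} asserts the same ``without loss of generality'' with no justification at all, so you are not creating a new gap --- but your attempted patch does not close it. If you want to repair this, one option is to split long good intervals into \emph{equal} subpieces so that every new good length lies in $[\tilde T/2,\tilde T]$; then all subpiece lengths are bounded away from $0$ and one can control the new per-cycle factors uniformly, though the bookkeeping for the subcycle that inherits the genuine bad interval still needs care.
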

\section{The Cucker-Smale model with attractive-repulsive interaction}
Consider a finite set of $N\in\N$ particles, with $N\geq 2 $. Let $x_{i}(t),v_{i}(t)\in \RR^d$ be the opinion and the velocity of the $i$-th particle at time $t$, respectively. The interactions between the elements of the system are described by the following  Cucker-Smale model with attractive-repulsive interaction
\begin{equation}\label{onoffcs}
	\begin{cases}
		\frac{d}{dt}x_{i}(t)=v_{i}(t),&\quad t>0,	\,\,\forall i=1,\dots,N,\\\frac{d}{dt}v_{i}(t)=\underset{j:j\neq i}{\sum}\alpha(t) a_{ij}(t)(x_{j}(t)-x_{i}(t)),&\quad t>0,	\,\,\forall i=1,\dots,N,
	\end{cases}
\end{equation}
with initial conditions
\begin{equation}\label{incondcs}
	\begin{cases}
		x_{i}(0)=x^{0}_{i}\in \mathbb{R}^{d},&\quad \forall i=1,\dots,N,\\v_{i}(0)=v^{0}_{i}\in \mathbb{R}^{d},&\quad \forall i=1,\dots,N.
	\end{cases}		
\end{equation}
Here, the communication rates $a_{ij}$ are of the form \eqref{infnotgen}, being $\tilde\psi:\RR\rightarrow \RR$ a continuous, positive, bounded influence function with supremum norm
$$\tilde{K}=\lVert\tilde\psi\rVert_{\infty}. $$The weight function $\alpha:[0,+\infty)\rightarrow\{-1,1\}$ is defined as in \eqref{alpha1}. Moreover, we assume that the sequence $\{t_n\}_n$ in the definition \eqref{alpha1} satisfies the following property:
\begin{equation}\label{tnbuoni}
	t_{2n+1}-t_{2n}>\frac{1}{\tilde K},\quad \forall n\in \mathbb{N}_0.
\end{equation}
Also, we set
\begin{equation}\label{tildeM0}
	\tilde{M}_{n}^{0}:=\max_{i=1,\dots,N}\lvert v_{i}(t_{n})\rvert.
\end{equation}
For each $t\geq 0$, we define the position diameter $d_{X}(\cdot)$ as
$$d_{X}(t):=\max_{i,j=1,\dots,N}\lvert x_{i}(t)-x_{j}(t)\rvert,$$
and the velocity diameter $d_{V}(\cdot)$ as $$d_{V}(t):=\max_{i,j=1,\dots,N}\lvert v_{i}(t)-v_{j}(t)\rvert.$$
\begin{defn} (Unconditional flocking)\label{unflock} We say that a solution $\{(x_{i},v_{i})\}_{i=1,\dots,N}$ to system \eqref{onoffcs} exhibits \textit{asymptotic flocking} if it satisfies the two following conditions:
	\begin{enumerate}
		\item there exists a positive constant $d^{*}$ such that$$\sup_{t\geq0}d_{X}(t)\leq d^{*};$$
		\item$\underset{t \to\infty}{\lim}d_{V}(t)=0.$
	\end{enumerate}
\end{defn}
We will prove the following flocking result.
\begin{thm}\label{af}
	Let $\tilde\psi:\RR^d\times \RR^d\rightarrow\RR$ be a positive, bounded, continuous function satisfying \begin{equation}\label{infint}
		\int_{0}^{\infty}\min_{r\in [0,x]}\tilde \psi(r)dx=+\infty.
	\end{equation} Assume that the sequence $\{t_{n}\}_{n}$ of definition \eqref{alpha1} satisfies \eqref{tntilde}, \eqref{tnbuoni}. Moreover, assume that the condition \eqref{sum1tilde} holds.
	Then, every solution $\{x_{i},v_{i}\}_{i=1,\dots,N}$ to \eqref{onoffcs} with the initial conditions \eqref{incondcs} exhibits asymptotic flocking.
	\end{thm}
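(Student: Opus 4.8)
The plan is to transport the estimates of Section~3 to the velocities $v_i$ and then couple them with a Cucker--Smale Lyapunov functional, using the fat--tail condition \eqref{infint} to confine the positions. Observe first that, on every interval where $\alpha$ is constant, the velocity equation of \eqref{onoffcs} has the same structure as \eqref{onoff} with the $x_i$'s replaced by the $v_i$'s and communication rates $\tfrac{1}{N-1}\tilde\psi(\lvert x_i(t)-x_j(t)\rvert)$; hence the $d_V$-analogues of Lemmas~\ref{L1}--\ref{L2-1} and of the growth estimate \eqref{boundgrowth} hold with $K$ replaced by $\tilde K$, and none of them needs a lower bound on $\tilde\psi$. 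In particular, by the $d_V$-analogue of Lemma~\ref{L2} (applied on sub-intervals $[t',t_{2n+1}]$) $d_V$ is non-increasing on each good interval $[t_{2n},t_{2n+1}]$, while by the argument of \eqref{boundgrowth} (applied on $[t_{2n+1},t]$) $d_V(t)\le\tfrac{e^{\tilde K(t-t_{2n+1})}}{2-e^{\tilde K(t-t_{2n+1})}}\,d_V(t_{2n+1})$ on $[t_{2n+1},t_{2n+2}]$. Telescoping these two facts and using \eqref{tntilde}, \eqref{sum1tilde}, one gets $d_V(t_{2n})\le M^\ast d_V(0)$ with $M^\ast:=\prod_{p\ge0}\tfrac{e^{\tilde K(t_{2p+2}-t_{2p+1})}}{2-e^{\tilde K(t_{2p+2}-t_{2p+1})}}<+\infty$, hence $\sup_{t\ge0}d_V(t)\le C'M^\ast d_V(0)=:V^\ast$ with $C':=\sup_n\tfrac{e^{\tilde K(t_{2n+2}-t_{2n+1})}}{2-e^{\tilde K(t_{2n+2}-t_{2n+1})}}<+\infty$. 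Moreover \eqref{sum1tilde} forces $\sum_p(t_{2p+2}-t_{2p+1})<+\infty$ (exactly as \eqref{sum1} implies \eqref{bad} in the Remark after Theorem~\ref{cons}), so $B:=\sum_{n\ge0}\int_{t_{2n+1}}^{t_{2n+2}}d_V(s)\,ds\le V^\ast\sum_p(t_{2p+2}-t_{2p+1})<+\infty$. All of this is unconditional.

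Next I would confine the position diameter. Set $\phi_m(r):=\min_{s\in[0,r]}\tilde\psi(s)$, positive and non-increasing, and $\Phi(r):=\int_0^r\phi_m(s)\,ds$, which by \eqref{infint} is a strictly increasing continuous bijection of $[0,+\infty)$ onto itself; put $\mathcal E(t):=d_V(t)+\Phi(d_X(t))$. One always has $\tfrac{d^{+}}{dt}d_X(t)\le d_V(t)$. On a good interval, picking a pair $i,j$ with $\lvert v_i(t)-v_j(t)\rvert=d_V(t)$ and $w=(v_i(t)-v_j(t))/d_V(t)$ — so that $\langle v_i(t)-v_k(t),w\rangle\ge0$ and $\langle v_k(t)-v_j(t),w\rangle\ge0$ for every $k$ — and bounding each rate by $\tilde\psi(\lvert x_i(t)-x_k(t)\rvert)\ge\phi_m(d_X(t))$ in the appropriate sign, the standard Cucker--Smale computation yields $\tfrac{d^{+}}{dt}d_V(t)\le-\phi_m(d_X(t))\,d_V(t)$; no monotonicity of $\tilde\psi$ is used. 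The same computation on a bad interval gives $\tfrac{d^{+}}{dt}d_V(t)\le 2\tilde K\,d_V(t)$. Since $\Phi'=\phi_m\le\tilde K$, it follows that $\tfrac{d^{+}}{dt}\mathcal E(t)\le0$ on good intervals and $\tfrac{d^{+}}{dt}\mathcal E(t)\le 3\tilde K\,d_V(t)$ on bad ones, whence $\sup_{t\ge0}\mathcal E(t)\le\mathcal E(0)+3\tilde K\,B=:\mathcal E^\ast<+\infty$ and therefore $\Phi(d_X(t))\le\mathcal E^\ast$ for all $t$, i.e. $\sup_{t\ge0}d_X(t)\le\Phi^{-1}(\mathcal E^\ast)=:d^\ast<+\infty$, which is the first condition of Definition~\ref{unflock}. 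In particular $\phi_m(d_X(t))\ge\phi_m(d^\ast)=:\psi_\ast>0$ for every $t\ge0$.

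With this uniform positive lower bound, velocity alignment follows: using $\tfrac{d^{+}}{dt}\mathcal E\le-\psi_\ast d_V$ on good intervals, $\tfrac{d^{+}}{dt}\mathcal E\le 3\tilde K d_V$ on bad ones, and $\mathcal E\ge0$, summation over periods gives $\psi_\ast\sum_{n\ge0}\int_{t_{2n}}^{t_{2n+1}}d_V(s)\,ds\le\mathcal E(0)+3\tilde K\,B=\mathcal E^\ast<+\infty$; together with $B<+\infty$ this yields $\int_0^{+\infty}d_V(t)\,dt<+\infty$. Since $d_V$ is non-increasing on each good interval, $\int_{t_{2n}}^{t_{2n+1}}d_V\ge(t_{2n+1}-t_{2n})\,d_V(t_{2n+1})>\tfrac{1}{\tilde K}\,d_V(t_{2n+1})$ by \eqref{tnbuoni}, so $\sum_{n}d_V(t_{2n+1})<+\infty$; hence $d_V(t_{2n+1})\to0$, and then $d_V(t_{2n+2})\le\tfrac{e^{\tilde K(t_{2n+2}-t_{2n+1})}}{2-e^{\tilde K(t_{2n+2}-t_{2n+1})}}d_V(t_{2n+1})\to0$ and $d_V(t_{2n})\to0$. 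Finally, $d_V(t)\le d_V(t_{2n})$ for $t\in[t_{2n},t_{2n+1}]$ and $d_V(t)\le\tfrac{e^{\tilde K(t-t_{2n+1})}}{2-e^{\tilde K(t-t_{2n+1})}}d_V(t_{2n+1})\le C'd_V(t_{2n+1})$ for $t\in[t_{2n+1},t_{2n+2}]$, so $d_V(t)\to0$ as $t\to+\infty$, which is the second condition of Definition~\ref{unflock}; thus asymptotic flocking holds.

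The part I expect to be the main obstacle is the dissipation inequality $\tfrac{d^{+}}{dt}d_V\le-\phi_m(d_X)\,d_V$ on the good intervals: this is where one is forced to work with the truncated minimum $\phi_m$ rather than $\tilde\psi$ itself (which is precisely why \eqref{infint}, and not $\int_0^{+\infty}\tilde\psi=+\infty$, is the right hypothesis) and where the all-to-all coupling of \eqref{onoffcs} is essential. The second point needing care is checking that $\mathcal E$ is non-increasing across good intervals while its total increase across the bad ones is finite; there \eqref{sum1tilde}, through the $d_V$-growth estimate \eqref{boundgrowth} and hence the finiteness of $B$, enters in an essential way, and it is \eqref{tnbuoni} that upgrades $\int_0^{+\infty}d_V<+\infty$ to $d_V(t)\to0$.
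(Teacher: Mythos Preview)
Your argument is essentially correct and follows a genuinely different, more classical, route than the paper. The paper builds an integral contraction estimate $d_V(t_{2n+1})\le\bigl(1-\int_{t_{2n}}^{t_{2n+1}}\phi(s)\,ds\bigr)d_V(t_{2n})$ via the Case~I/Case~II Gronwall technique of Section~3, then couples it with a piecewise Lyapunov functional $\mathcal L(t)=\mathcal D(t)+\int_0^{\max_{s\le t}d_X(s)}\min\{e^{-\tilde KT}\phi_m(r),e^{-\tilde KT}/T\}\,dr$ (after artificially capping the good-interval lengths by $T$) to first bound the positions and then re-enter the contraction chain with a uniform $\phi^*$. You instead use the standard Cucker--Smale pointwise dissipation $\tfrac{d^{+}}{dt}d_V\le-\phi_m(d_X)\,d_V$ on good intervals and the crude growth $\tfrac{d^{+}}{dt}d_V\le 2\tilde K\,d_V$ on bad ones, feed these into the classical energy $\mathcal E=d_V+\Phi(d_X)$, and exploit $\sum_p(t_{2p+2}-t_{2p+1})<+\infty$ to control the total bad-interval increment by $3\tilde K B$; this is cleaner, avoids the $T$-splitting, and needs neither the Case~I/Case~II machinery nor Proposition~\ref{L4}. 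The paper's approach, on the other hand, stays uniform with the first-order analysis and yields somewhat sharper constants in the contraction factors.

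One small slip: the displayed inequality $\tfrac{d^{+}}{dt}\mathcal E\le-\psi_\ast d_V$ on good intervals does not follow from your computation---what you actually proved there is only $\tfrac{d^{+}}{dt}\mathcal E\le0$, because $\phi_m(d_X)\tfrac{d^{+}}{dt}d_X$ can be as large as $\phi_m(d_X)\,d_V$, which exactly cancels the dissipative term. The fix is immediate: work directly with $d_V$ instead of $\mathcal E$ at that point. Integrating $\tfrac{d^{+}}{dt}d_V\le-\psi_\ast d_V$ over each good interval and $\tfrac{d^{+}}{dt}d_V\le 2\tilde K d_V$ over each bad one, then telescoping, gives
\[
\psi_\ast\sum_{n\ge0}\int_{t_{2n}}^{t_{2n+1}}d_V\ \le\ d_V(0)\;+\;\sum_{n\ge0}\bigl[d_V(t_{2n+2})-d_V(t_{2n+1})\bigr]\ \le\ d_V(0)+2\tilde K B,
\]
which is even a bit better than the bound $\mathcal E(0)+3\tilde K B$ you wrote. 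The rest of your argument (using \eqref{tnbuoni} to extract $d_V(t_{2n+1})\to0$ from $\int_0^\infty d_V<\infty$, then bootstrapping via the bad-interval growth bound) is correct as stated.
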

\section{The asymptotic flocking result}
\setcounter{equation}{0}

\noindent Let $\{(x_{i},v_{i})\}_{i=1,\dots,N}$ be solution to \eqref{onoffcs} under the initial conditions \eqref{incondcs}. We assume that the hypotheses of Theorem \ref{af} are satisfied. We first present some auxiliary lemmas that will be needed for the proof of Theorem \ref{af}. Since these auxiliary results are analogous to the ones given in Section 3, we omit their proof.
\begin{lem}\label{L1cs}
	For each $v\in \RR^{d}$ and $n\in \mathbb{N}_0,$  we have that 
	\begin{equation}\label{scalprcs}
		\min_{j=1,\dots,N}\langle v_{j}(t_{2n}),v\rangle\leq \langle v_{i}(t),v\rangle\leq \max_{j=1,\dots,N}\langle v_{j}(t_{2n}),v\rangle,
	\end{equation}for all  $t\in [t_{2n},t_{2n+1}]$ and $i=1,\dots,N$. 
\end{lem}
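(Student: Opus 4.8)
The plan is to transcribe the proof of Lemma~\ref{L1} almost verbatim, replacing the opinions $x_i$ by the velocities $v_i$ and the constant $K$ by $\tilde K$: on a \emph{good} interval the velocity dynamics of \eqref{onoffcs} have exactly the same confining structure as the opinion dynamics of \eqref{onoff}, so the same continuity-plus-Gronwall bootstrap will apply. I would fix $n\in\mathbb{N}_0$ and $v\in\RR^d$ and establish the upper bound in \eqref{scalprcs} first. To this end I would set $M_{t_{2n}}:=\max_{j=1,\dots,N}\langle v_j(t_{2n}),v\rangle$ and, for each $\epsilon>0$, introduce
$$K^{\epsilon}:=\left\{t\in[t_{2n},t_{2n+1}]:\max_{i=1,\dots,N}\langle v_i(s),v\rangle<M_{t_{2n}}+\epsilon,\ \forall s\in[t_{2n},t)\right\},\qquad S^{\epsilon}:=\sup K^{\epsilon},$$
which by continuity lies in $(t_{2n},t_{2n+1}]$. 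It then suffices to show $S^{\epsilon}=t_{2n+1}$ for every $\epsilon>0$, since letting $\epsilon\to 0^+$ yields $\max_i\langle v_i(t),v\rangle\le M_{t_{2n}}$ on $[t_{2n},t_{2n+1}]$, i.e. the second inequality in \eqref{scalprcs}.

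I would then argue by contradiction, assuming $S^{\epsilon}<t_{2n+1}$. Exactly as in Lemma~\ref{L1}, one has $\max_i\langle v_i(t),v\rangle<M_{t_{2n}}+\epsilon$ on $(t_{2n},S^{\epsilon})$, while $\max_i\langle v_i(t),v\rangle\to M_{t_{2n}}+\epsilon$ as $t\to S^{\epsilon-}$. The key computation is the derivative of $\langle v_i(\cdot),v\rangle$ along \eqref{onoffcs}: on $(t_{2n},S^{\epsilon})\subset(t_{2n},t_{2n+1})$ we have $\alpha(t)=1$, and the governing equation for $v_i$ gives
$$\frac{d}{dt}\langle v_i(t),v\rangle=\frac{1}{N-1}\sum_{j:j\neq i}\tilde\psi(\lvert x_i(t)-x_j(t)\rvert)\big(\langle v_j(t),v\rangle-\langle v_i(t),v\rangle\big).$$
Since $\tilde\psi$ is positive and bounded by $\tilde K$, and since $\langle v_j(t),v\rangle<M_{t_{2n}}+\epsilon$ for all $j$ on this interval, the right-hand side is bounded above by $\tilde K\,(M_{t_{2n}}+\epsilon-\langle v_i(t),v\rangle)$.

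Gronwall's inequality would then produce, precisely as in Lemma~\ref{L1}, the estimate
$$\langle v_i(t),v\rangle\le M_{t_{2n}}+\epsilon-\epsilon\,e^{-\tilde K(S^{\epsilon}-t_{2n})},\qquad \forall\, t\in(t_{2n},S^{\epsilon}),\ \forall i,$$
so that taking the maximum over $i$ and letting $t\to S^{\epsilon-}$ would give $M_{t_{2n}}+\epsilon\le M_{t_{2n}}+\epsilon-\epsilon\,e^{-\tilde K(S^{\epsilon}-t_{2n})}$, a contradiction. Hence $S^{\epsilon}=t_{2n+1}$ and the upper bound follows. The lower bound I would obtain by applying the upper bound to the vector $-v$: from $\langle v_i(t),-v\rangle\le\max_j\langle v_j(t_{2n}),-v\rangle=-\min_j\langle v_j(t_{2n}),v\rangle$ one recovers $\langle v_i(t),v\rangle\ge\min_j\langle v_j(t_{2n}),v\rangle$, which is the first inequality in \eqref{scalprcs}.

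I expect no genuine obstacle beyond bookkeeping: the only point deserving care will be verifying that differentiating $\langle v_i,v\rangle$ along \eqref{onoffcs} on a good interval produces a \emph{velocity}-difference term weighted by the positive, bounded kernel $\tilde\psi$, so that the sign condition $M_{t_{2n}}+\epsilon-\langle v_i,v\rangle\ge 0$ used in the Gronwall step is genuinely available. Once this structural check is in place, every subsequent line is identical to that of Lemma~\ref{L1} with $K$ replaced by $\tilde K$, which is exactly why the result may be stated as analogous and its proof omitted.
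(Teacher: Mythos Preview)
Your proposal is correct and follows exactly the approach the paper itself indicates: the authors omit the proof of Lemma~\ref{L1cs} as analogous to that of Lemma~\ref{L1}, and your transcription with $x_i\to v_i$, $K\to\tilde K$, $\psi\to\tilde\psi$ is precisely that analogy. Your flagged structural check is well-placed---note that \eqref{onoffcs} as printed contains a typo (the $x_j-x_i$ in the velocity equation should read $v_j-v_i$), but the intended dynamics, confirmed by the computations in Proposition~\ref{L5cs}, are exactly the ones you use.
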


\begin{lem}\label{L2cs}
	For each $n\in \mathbb{N}_{0}$ and $i,j=1,\dots,N$, we get \begin{equation}
		\label{distcs}
		\lvert v_{i}(s)-v_{j}(t)\rvert\leq d_{V}(t_{2n}), \quad\forall s,t\in [t_{2n},t_{2n+1}].
	\end{equation} 
\end{lem}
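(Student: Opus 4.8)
The plan is to mirror the proof of Lemma \ref{L2} verbatim, simply replacing the opinion variables $x_i$ by the velocities $v_i$ and the diameter $d$ by the velocity diameter $d_V$, and invoking the velocity form of the scalar-product invariance, namely Lemma \ref{L1cs}, in place of Lemma \ref{L1}. The point is that on a good interval $[t_{2n},t_{2n+1}]$ the velocities satisfy exactly the same componentwise confinement between their extremal values at the left endpoint $t_{2n}$, so the diameter argument carries over unchanged.

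First I would fix $n\in\mathbb{N}_0$, indices $i,j\in\{1,\dots,N\}$, and two times $s,t\in[t_{2n},t_{2n+1}]$. If $\lvert v_i(s)-v_j(t)\rvert=0$ the inequality is trivial since $d_V(t_{2n})\geq 0$, so I may assume $\lvert v_i(s)-v_j(t)\rvert>0$ and set the unit vector $v:=(v_i(s)-v_j(t))/\lvert v_i(s)-v_j(t)\rvert$. Then $\lvert v_i(s)-v_j(t)\rvert=\langle v_i(s)-v_j(t),v\rangle=\langle v_i(s),v\rangle-\langle v_j(t),v\rangle$, which reduces the estimate to controlling two scalar products at the (possibly distinct) times $s$ and $t$.

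The crucial step is then to apply Lemma \ref{L1cs} twice, with the common anchor time $t_{2n}$: since $s,t\in[t_{2n},t_{2n+1}]$, the upper bound gives $\langle v_i(s),v\rangle\leq\max_{l=1,\dots,N}\langle v_l(t_{2n}),v\rangle$ and the lower bound gives $\langle v_j(t),v\rangle\geq\min_{l=1,\dots,N}\langle v_l(t_{2n}),v\rangle$. Subtracting, $\lvert v_i(s)-v_j(t)\rvert\leq\max_{l}\langle v_l(t_{2n}),v\rangle-\min_{l}\langle v_l(t_{2n}),v\rangle\leq\max_{l,k}\langle v_l(t_{2n})-v_k(t_{2n}),v\rangle$, and the Cauchy–Schwarz inequality together with $\lvert v\rvert=1$ yields the final bound $\max_{l,k}\lvert v_l(t_{2n})-v_k(t_{2n})\rvert=d_V(t_{2n})$, which is exactly \eqref{distcs}.

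I do not expect any genuine obstacle here, as the argument is a direct transcription of Lemma \ref{L2}; the only point demanding mild care is that $s$ and $t$ are in general distinct times that nonetheless both lie in the good interval $[t_{2n},t_{2n+1}]$, so Lemma \ref{L1cs} legitimately applies to each of them with the same reference configuration at $t_{2n}$, and it is precisely this common anchor that lets the one-sided bounds combine into the diameter $d_V(t_{2n})$.
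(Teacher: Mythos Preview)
Your proof is correct and follows exactly the approach the paper intends: the authors explicitly omit the proof of Lemma~\ref{L2cs}, stating that it is analogous to Lemma~\ref{L2}, and your argument is precisely that transcription with $x_i$ replaced by $v_i$, $d$ by $d_V$, and Lemma~\ref{L1} by Lemma~\ref{L1cs}.
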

\begin{oss}
	Let us note that from \eqref{distcs}, in particular, it follows that
	\begin{equation}\label{deccs}
		d_{V}(t_{2n+1})\leq d_{V}(t_{2n}),\quad \forall n\in \mathbb{N}_{0}.
	\end{equation}
	
\end{oss} 
\begin{lem}\label{L1-1cs}
	For each $v\in \RR^{d}$ and $n\in \mathbb{N}_0,$  we have that 
	\begin{equation}\label{scalpr-1cs}
		\min_{j=1,\dots,N}\langle v_{j}(t_{2n+2}),v\rangle\leq \langle v_{i}(t),v\rangle\leq \max_{j=1,\dots,N}\langle v_{j}(t_{2n+2}),v\rangle,
	\end{equation}for all  $t\in [t_{2n+1},t_{2n+2}]$ and $i=1,\dots,N$. 
\end{lem}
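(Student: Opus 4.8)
Lemma \ref{L1-1cs} is the velocity counterpart of Lemma \ref{L1-1}, and the natural plan is to transcribe the proof of Lemma \ref{L1-1}, replacing the opinion $x_i(t)$ by the velocity $v_i(t)$, the influence term $\psi(x_i(t),x_j(t))$ by $\tilde\psi(\lvert x_i(t)-x_j(t)\rvert)$, and the bound $K$ by $\tilde K=\lVert\tilde\psi\rVert_\infty$. The point is that on the bad interval $[t_{2n+1},t_{2n+2}]$ one has $\alpha\equiv-1$, so the equation for the velocities in \eqref{onoffcs} there reads $\frac{d}{dt}v_i(t)=-\frac{1}{N-1}\sum_{j\neq i}\tilde\psi(\lvert x_i(t)-x_j(t)\rvert)(v_j(t)-v_i(t))$, which is exactly of the form of the Hegselmann--Krause equation \eqref{onoff} restricted to a bad interval, with communication rates $\frac{1}{N-1}\tilde\psi(\lvert x_i(t)-x_j(t)\rvert)$ that are positive and bounded by $\tilde K/(N-1)$. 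Every ingredient of the proof of Lemma \ref{L1-1} uses only the boundedness of these rates and the sign $\alpha\equiv-1$, both of which survive the substitution, so no new idea is required.

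Concretely, I would first observe that by linearity in $v$ it suffices to prove the upper bound $\langle v_i(t),v\rangle\le M:=\max_{j}\langle v_j(t_{2n+2}),v\rangle$ for all $i$ and all $t\in[t_{2n+1},t_{2n+2}]$, the lower bound then following by applying this to $-v$. Then, fixing $\epsilon>0$ and mimicking Lemma \ref{L1-1}, I would introduce the barrier time $S^\epsilon:=\inf\{t\in[t_{2n+1},t_{2n+2}]:\max_i\langle v_i(s),v\rangle<M+\epsilon\ \ \forall s\in(t,t_{2n+2}]\}$, which by continuity lies in $[t_{2n+1},t_{2n+2})$, and argue by contradiction that $S^\epsilon>t_{2n+1}$. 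On $(S^\epsilon,t_{2n+2})$ one has $\max_i\langle v_i(t),v\rangle<M+\epsilon$, hence $M+\epsilon-\langle v_i(t),v\rangle>0$ for every $i$, while continuity of $t\mapsto\max_i\langle v_i(t),v\rangle$ forces $\max_i\langle v_i(S^\epsilon),v\rangle=M+\epsilon$; using $\alpha\equiv-1$ and $\tilde\psi\le\tilde K$ one gets $\frac{d}{dt}\langle v_i(t),v\rangle\ge-\tilde K\bigl(M+\epsilon-\langle v_i(t),v\rangle\bigr)$ on $(S^\epsilon,t_{2n+2})$, and a forward Gronwall estimate from $S^\epsilon$ gives $\langle v_i(t),v\rangle\ge e^{\tilde K(t-S^\epsilon)}\langle v_i(S^\epsilon),v\rangle+(M+\epsilon)\bigl(1-e^{\tilde K(t-S^\epsilon)}\bigr)$; maximizing over $i$ and inserting $\max_i\langle v_i(S^\epsilon),v\rangle=M+\epsilon$ yields $\max_i\langle v_i(t),v\rangle\ge M+\epsilon$ on $(S^\epsilon,t_{2n+2})$, so in particular $M=\max_i\langle v_i(t_{2n+2}),v\rangle\ge M+\epsilon>M$, a contradiction. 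Hence $S^\epsilon=t_{2n+1}$, i.e. $\max_i\langle v_i(t),v\rangle<M+\epsilon$ on $(t_{2n+1},t_{2n+2})$; letting $\epsilon\downarrow0$ gives $\langle v_i(t),v\rangle\le M$ on $[t_{2n+1},t_{2n+2}]$ for every $i$, which is the second inequality in \eqref{scalpr-1cs}, and the first follows by replacing $v$ with $-v$ exactly as at the end of the proof of Lemma \ref{L1-1}.

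There is no substantive obstacle here: the statement is a direct transcription, which is why the authors can legitimately omit it. The single point that has to be kept straight — and the reason one cannot simply quote Lemma \ref{L1cs} verbatim — is the \emph{time-reversed} bookkeeping forced by $\alpha\equiv-1$ on a bad interval: the barrier time $S^\epsilon$ must be taken as an infimum rather than a supremum, the relevant differential inequality carries a $+\tilde K$ rather than a $-\tilde K$ coefficient, and the Gronwall comparison is propagated forward from $S^\epsilon$ towards the right endpoint $t_{2n+2}$ (where $M$ is defined), which is precisely the arrangement already carried out in the proof of Lemma \ref{L1-1} and only needs to be rewritten with $v_i$, $\tilde\psi$ and $\tilde K$ in place of $x_i$, $\psi$ and $K$.
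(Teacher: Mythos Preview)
Your proposal is correct and is exactly the approach the paper intends: the authors omit the proof of Lemma \ref{L1-1cs} because it is a verbatim transcription of the proof of Lemma \ref{L1-1} with $x_i$, $\psi$, $K$ replaced by $v_i$, $\tilde\psi$, $\tilde K$, and you have carried out precisely that transcription, correctly handling the time-reversed barrier argument forced by $\alpha\equiv-1$ on the bad interval.
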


\begin{lem}\label{L2-1cs}
	For each $n\in \mathbb{N}_{0}$ and $i,j=1,\dots,N$, we get \begin{equation}
		\label{dist-1cs}
		\lvert v_{i}(s)-v_{j}(t)\rvert\leq d_{V}(t_{2n+2}), \quad\forall s,t\in [t_{2n+1},t_{2n+2}].
	\end{equation} 
\end{lem}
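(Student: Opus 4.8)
The plan is to mirror the proof of Lemma \ref{L2-1} verbatim, transposing it from positions to velocities and invoking the velocity trapping estimate \eqref{scalpr-1cs} of Lemma \ref{L1-1cs} in place of \eqref{scalpr-1}. Fix $n\in\mathbb{N}_0$, indices $i,j\in\{1,\dots,N\}$, and two times $s,t\in[t_{2n+1},t_{2n+2}]$. If $\lvert v_i(s)-v_j(t)\rvert=0$ the claimed inequality is trivial since $d_V(t_{2n+2})\geq 0$, so I may assume $\lvert v_i(s)-v_j(t)\rvert>0$ and introduce the unit vector $w=(v_i(s)-v_j(t))/\lvert v_i(s)-v_j(t)\rvert$; I write $w$ rather than $v$ to avoid collision with the velocity variables.

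The key step is to realize the norm as a scalar product and then split it across the two distinct times, $\lvert v_i(s)-v_j(t)\rvert=\langle v_i(s)-v_j(t),w\rangle=\langle v_i(s),w\rangle-\langle v_j(t),w\rangle$. The crucial point is that $w$ is fixed once $s,t$ are chosen, so that the trapping estimate \eqref{scalpr-1cs}—which holds on the whole interval $[t_{2n+1},t_{2n+2}]$ for any fixed direction—can be applied with this single $w$ both at time $s$, to bound $\langle v_i(s),w\rangle$ from above by $\max_{l=1,\dots,N}\langle v_l(t_{2n+2}),w\rangle$, and at time $t$, to bound $\langle v_j(t),w\rangle$ from below by $\min_{l=1,\dots,N}\langle v_l(t_{2n+2}),w\rangle$. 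This is the exact analogue of the positions case treated in Lemma \ref{L2-1}.

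Combining these two bounds yields $\lvert v_i(s)-v_j(t)\rvert\leq \max_{l=1,\dots,N}\langle v_l(t_{2n+2}),w\rangle-\min_{l=1,\dots,N}\langle v_l(t_{2n+2}),w\rangle\leq \max_{l,k=1,\dots,N}\langle v_l(t_{2n+2})-v_k(t_{2n+2}),w\rangle$, and a final application of the Cauchy--Schwarz inequality together with $\lvert w\rvert=1$ bounds the right-hand side by $\max_{l,k=1,\dots,N}\lvert v_l(t_{2n+2})-v_k(t_{2n+2})\rvert=d_V(t_{2n+2})$, which is precisely \eqref{dist-1cs}. Since every ingredient is a direct transcription of the argument for \eqref{dist-1}, no genuine obstacle arises; the only care needed is the bookkeeping that the same fixed direction $w$ is used at the two different evaluation times $s$ and $t$, which is legitimate exactly because \eqref{scalpr-1cs} is uniform in the time variable on $[t_{2n+1},t_{2n+2}]$.
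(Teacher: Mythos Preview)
Your proposal is correct and follows exactly the approach the paper intends: the paper omits the proof of Lemma~\ref{L2-1cs}, stating that it is analogous to the position-variable result Lemma~\ref{L2-1}, and your argument is precisely that transcription, using the velocity trapping estimate \eqref{scalpr-1cs} in place of \eqref{scalpr-1}. Your renaming of the direction vector to $w$ to avoid clashing with the velocity notation is a sensible cosmetic improvement.
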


\begin{oss}
	Let us note that from \eqref{distcs}, in particular, it follows that
	\begin{equation}\label{crecs}
		d_{V}(t_{2n+2})\geq d_{V}(t_{2n+1}),\quad \forall n\in \mathbb{N}_{0}.
	\end{equation}
	
\end{oss}
\begin{lem}\label{L3cs}
	For every $i=1,\dots,N,$ we have that \begin{equation}\label{boundsolcs}
		\lvert v_{i}(t)\rvert\leq \tilde M_{2n}^{0},\quad \forall t\in [t_{2n},t_{2n+1}],
	\end{equation}
	where $\tilde {M}_{2n}^{0}$	is the positive constant in \eqref{tildeM0}.
\end{lem}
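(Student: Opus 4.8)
The plan is to reproduce, for the velocity variables, the short argument that proves Lemma~\ref{L3} for the opinion variables, with Lemma~\ref{L1cs} (inequality \eqref{scalprcs}) now in the role that Lemma~\ref{L1} (inequality \eqref{scalpr}) played there; no new idea beyond the comparison estimate \eqref{scalprcs} is needed.

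First I would fix $i\in\{1,\dots,N\}$, $n\in\mathbb{N}_0$ and $t\in[t_{2n},t_{2n+1}]$. If $\lvert v_{i}(t)\rvert=0$ there is nothing to prove, since $\tilde M_{2n}^{0}\ge 0$ by \eqref{tildeM0}. Otherwise $\lvert v_{i}(t)\rvert>0$, and I would introduce the unit vector
$$w:=\frac{v_{i}(t)}{\lvert v_{i}(t)\rvert},$$
for which $\lvert v_{i}(t)\rvert=\langle v_{i}(t),w\rangle$. Since $t$ lies in the interval of positive interaction $[t_{2n},t_{2n+1}]$, I would apply the upper bound in \eqref{scalprcs} with direction $w$ and then the Cauchy--Schwarz inequality, getting
$$\lvert v_{i}(t)\rvert=\langle v_{i}(t),w\rangle\le\max_{j=1,\dots,N}\langle v_{j}(t_{2n}),w\rangle\le\max_{j=1,\dots,N}\lvert v_{j}(t_{2n})\rvert\,\lvert w\rvert=\max_{j=1,\dots,N}\lvert v_{j}(t_{2n})\rvert=\tilde M_{2n}^{0},$$
which is exactly \eqref{boundsolcs}.

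There is essentially no obstacle at this level: all the substance sits in Lemma~\ref{L1cs}, which says that along an interval of positive interaction each velocity remains, in every direction $w$, between $\min_{j}\langle v_{j}(t_{2n}),w\rangle$ and $\max_{j}\langle v_{j}(t_{2n}),w\rangle$. That lemma is the one that genuinely exploits the structure of the good intervals (the sign $\alpha\equiv 1$ on $(t_{2n},t_{2n+1})$) through a Gronwall estimate and a continuity argument on a maximal subinterval, exactly as in the first-order case Lemma~\ref{L1}, and, as emphasized above, it uses no monotonicity of $\tilde\psi$. Granting \eqref{scalprcs}, Lemma~\ref{L3cs} follows in the two displayed lines above.
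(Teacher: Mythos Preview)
Your proposal is correct and follows exactly the approach the paper intends: the paper omits the proof of Lemma~\ref{L3cs}, stating that it is analogous to the corresponding first-order result, Lemma~\ref{L3}, and your argument is a line-by-line transcription of that proof with $v_i$ in place of $x_i$ and \eqref{scalprcs} in place of \eqref{scalpr}.
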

\begin{lem}
	For all $t\geq 0$, we have that
	\begin{equation}\label{unifboundsolcs}
		\max_{i=1,\dots,N}\lvert v_{i}(t)\rvert\leq \tilde M^{0},
	\end{equation}
	where 
	\begin{equation}\label{mocs}
		\tilde M^0:=e^{\tilde K\sum_{p=0}^{\infty}(t_{2p+2}-t_{2p+1})}\tilde{M}^{0}_{0}
	\end{equation}
\end{lem}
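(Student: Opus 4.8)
The plan is to run, at the level of velocities, exactly the argument used inside the proof of Proposition \ref{14nov} to establish the position bound \eqref{unifboundsol}. Concretely, the roles of $x_{i}$, $K$, $M^{0}_{2n}$, and of Lemmas \ref{L1}, \ref{L1-1}, \ref{L3} will be played, respectively, by $v_{i}$, $\tilde K$, $\tilde M^{0}_{2n}$, and by their velocity counterparts Lemmas \ref{L1cs}, \ref{L1-1cs}, \ref{L3cs}. Fix $t\geq 0$ and choose $n\in\mathbb{N}_{0}$ with $t\in[t_{2n},t_{2n+2}]$. If $t$ lies in the good interval $[t_{2n},t_{2n+1}]$, Lemma \ref{L3cs} gives at once $\lvert v_{i}(t)\rvert\leq\tilde M^{0}_{2n}$ for every $i$, so nothing more is needed there.

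The substance is the bad interval $t\in(t_{2n+1},t_{2n+2})$. First I would fix $i$, assume $\lvert v_{i}(t)\rvert>0$, and set $v:=v_{i}(t)/\lvert v_{i}(t)\rvert$, so that $\lvert v_{i}(t)\rvert=\langle v_{i}(t),v\rangle$. Differentiating $s\mapsto\langle v_{i}(s),v\rangle$ along the velocity equation in \eqref{onoffcs}, where $\alpha(s)=-1$, and using the lower bound $\langle v_{l}(s),v\rangle\geq m:=\min_{l}\langle v_{l}(t_{2n+2}),v\rangle$ provided by Lemma \ref{L1-1cs} together with $\tilde\psi\leq\tilde K$, one arrives at the differential inequality $\frac{d}{ds}\langle v_{i}(s),v\rangle\leq\tilde K(\langle v_{i}(s),v\rangle-m)$ on $[t_{2n+1},t]$. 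Gronwall's inequality, the bound $t-t_{2n+1}\leq t_{2n+2}-t_{2n+1}$, and the sign information $\langle v_{i}(t_{2n+1}),v\rangle\geq m$ (again Lemma \ref{L1-1cs}) then yield, after discarding the residual $m\bigl(1-e^{\tilde K(t_{2n+2}-t_{2n+1})}\bigr)$ term exactly as in Proposition \ref{14nov}, the estimate $\langle v_{i}(t),v\rangle\leq e^{\tilde K(t_{2n+2}-t_{2n+1})}\langle v_{i}(t_{2n+1}),v\rangle$; invoking Lemma \ref{L3cs} on the right-hand side gives $\lvert v_{i}(t)\rvert\leq e^{\tilde K(t_{2n+2}-t_{2n+1})}\tilde M^{0}_{2n}$, and this holds trivially also when $\lvert v_{i}(t)\rvert=0$.

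Combining the two cases and using $e^{\tilde K(t_{2n+2}-t_{2n+1})}>1$, one gets $\lvert v_{i}(t)\rvert\leq e^{\tilde K(t_{2n+2}-t_{2n+1})}\tilde M^{0}_{2n}$ for all $t\in[t_{2n},t_{2n+2}]$ and all $i$; in particular $\tilde M^{0}_{2n+2}\leq e^{\tilde K(t_{2n+2}-t_{2n+1})}\tilde M^{0}_{2n}$. An easy induction then gives $\tilde M^{0}_{2n}\leq\tilde M^{0}_{0}\prod_{p=0}^{n-1}e^{\tilde K(t_{2p+2}-t_{2p+1})}$, hence, for our chosen $n$, $\lvert v_{i}(t)\rvert\leq\tilde M^{0}_{0}\,e^{\tilde K\sum_{p=0}^{n}(t_{2p+2}-t_{2p+1})}\leq\tilde M^{0}_{0}\,e^{\tilde K\sum_{p=0}^{\infty}(t_{2p+2}-t_{2p+1})}=\tilde M^{0}$. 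Here the series $\sum_{p}(t_{2p+2}-t_{2p+1})$ converges because \eqref{sum1tilde} forces it to, by the same elementary estimate recorded in the Remark following Theorem \ref{cons}; this is also what makes $\tilde M^{0}$ in \eqref{mocs} finite. Taking the maximum over $i$ establishes \eqref{unifboundsolcs}.

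There is no genuinely hard step here: the statement is the velocity analogue of the position bound \eqref{unifboundsol} hidden inside Proposition \ref{14nov}, and the work is purely a matter of transporting that computation and keeping the Gronwall sign bookkeeping of the bad interval faithful to the original. The only mildly delicate points are the two uses of Lemma \ref{L1-1cs} that guarantee $\langle v_{i}(s),v\rangle\geq m$ and $\langle v_{i}(t_{2n+1}),v\rangle\geq m$, which are what legitimize both the differential inequality and the replacement of $e^{\tilde K(t-t_{2n+1})}$ by the larger constant $e^{\tilde K(t_{2n+2}-t_{2n+1})}$; since Lemmas \ref{L1cs} and \ref{L1-1cs} are stated in the excerpt without proof (being verbatim copies of Lemmas \ref{L1} and \ref{L1-1} with $v_{i}$ and $\tilde K$ in place of $x_{i}$ and $K$), I would simply cite them.
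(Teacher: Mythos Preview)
Your proposal is correct and follows exactly the approach the paper intends: the statement is listed among the auxiliary results whose proofs are omitted as ``analogous to the ones given in Section 3,'' and what you have written is precisely the velocity transcription of the argument inside Proposition~\ref{14nov} that establishes \eqref{unifboundsol}, with Lemmas~\ref{L1cs}, \ref{L1-1cs}, \ref{L3cs} and $\tilde K$, $\tilde M^{0}_{2n}$ replacing Lemmas~\ref{L1}, \ref{L1-1}, \ref{L3} and $K$, $M^{0}_{2n}$.
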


\begin{prop}\label{L4cs}
	For all $i,j=1,\dots,N$,  unit vector $v\in \RR^{d}$ and $n\in\mathbb{N}_{0}$ we have that 
	\begin{equation}\label{4cs}
		\langle v_{i}(t)-v_{j}(t),v\rangle\leq e^{-\tilde K(t-\bar{t})}\langle v_{i}(\bar{t})-x_{j}(\bar{t}),v\rangle+(1-e^{-\tilde K(t-\bar{t})})d_{V}(t_{2n}),
	\end{equation}
	for all $t_{2n+1}>t\geq \bar{t}\geq t_{2n}$. 
\end{prop}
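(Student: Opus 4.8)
The estimate \eqref{4cs} is the exact velocity counterpart of Proposition \ref{L4} (the right-hand side should of course read $\langle v_i(\bar t)-v_j(\bar t),v\rangle$), so the plan is to reproduce that argument verbatim under the substitutions $x_i\rightsquigarrow v_i$, $\psi\rightsquigarrow\tilde\psi$, $K\rightsquigarrow\tilde K$, $d\rightsquigarrow d_V$, using Lemma \ref{L1cs} in place of Lemma \ref{L1}. First I would fix $n\in\mathbb{N}_0$ and a unit vector $v\in\RR^d$, and set $M:=\max_{k=1,\dots,N}\langle v_k(t_{2n}),v\rangle$ and $m:=\min_{k=1,\dots,N}\langle v_k(t_{2n}),v\rangle$, noting that $M-m\le d_V(t_{2n})$.

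Next, on the good interval $[t_{2n},t_{2n+1})$ one has $\alpha(t)=1$, so the second equation of \eqref{onoffcs} together with \eqref{infnotgen} gives
\[
\frac{d}{dt}\langle v_i(t),v\rangle=\frac{1}{N-1}\sum_{j:j\neq i}\tilde\psi(\lvert x_i(t)-x_j(t)\rvert)\bigl(\langle v_j(t),v\rangle-\langle v_i(t),v\rangle\bigr).
\]
By Lemma \ref{L1cs}, $\langle v_k(t),v\rangle\le M$ for every $k$ and every such $t$, hence $M-\langle v_i(t),v\rangle\ge 0$; replacing $\langle v_j(t),v\rangle$ by $M$ and bounding $\tilde\psi$ by $\tilde K$ then yields $\frac{d}{dt}\langle v_i(t),v\rangle\le\tilde K\bigl(M-\langle v_i(t),v\rangle\bigr)$, and Gronwall's inequality on $[\bar t,t]$ gives $\langle v_i(t),v\rangle\le e^{-\tilde K(t-\bar t)}\langle v_i(\bar t),v\rangle+(1-e^{-\tilde K(t-\bar t)})M$. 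Symmetrically, the lower bound $\langle v_k(t),v\rangle\ge m$ from Lemma \ref{L1cs} produces $\frac{d}{dt}\langle v_i(t),v\rangle\ge\tilde K\bigl(m-\langle v_i(t),v\rangle\bigr)$, whence $\langle v_i(t),v\rangle\ge e^{-\tilde K(t-\bar t)}\langle v_i(\bar t),v\rangle+(1-e^{-\tilde K(t-\bar t)})m$.

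Finally I would subtract the lower estimate for index $j$ from the upper estimate for index $i$ and use $M-m\le d_V(t_{2n})$ to recover exactly \eqref{4cs}. I do not expect a real obstacle: the three points that matter are the same as in Proposition \ref{L4} — on the good interval $\alpha\equiv1$ makes the right-hand side dissipative; Lemma \ref{L1cs} supplies the a priori sign $M-\langle v_i(t),v\rangle\ge0$, which is precisely what allows passing from the nonlinear right-hand side to the linear differential inequality; and no monotonicity of $\tilde\psi$ is used, only the sup-norm bound $\tilde\psi\le\tilde K$. (The claim is stated for $t<t_{2n+1}$, so there is no endpoint subtlety, though by continuity it would also hold at $t=t_{2n+1}$.)
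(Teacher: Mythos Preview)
Your proposal is correct and follows exactly the approach the paper intends: the authors omit the proof of Proposition~\ref{L4cs}, stating only that it is analogous to the results of Section~3, and your argument reproduces the proof of Proposition~\ref{L4} with the substitutions $x_i\rightsquigarrow v_i$, $\psi\rightsquigarrow\tilde\psi$, $K\rightsquigarrow\tilde K$, $d\rightsquigarrow d_V$ and Lemma~\ref{L1cs} in place of Lemma~\ref{L1}, which is precisely that analogy. Your parenthetical correction of the typo $x_j(\bar t)\to v_j(\bar t)$ in the statement is also right.
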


\begin{lem}
	For all $n\in \mathbb{N}_{0}$, we have that
	\begin{equation}\label{boundgrowthcs}
		d_{V}(t_{2n+2})\leq \frac{e^{\tilde K(t_{2n+2}-t_{2n+1})}}{2-e^{\tilde K(t_{2n+2}-t_{2n+1})}}d_{V}(t_{2n+1}).
	\end{equation}
\end{lem}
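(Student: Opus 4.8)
The plan is to transcribe, essentially verbatim, the argument that established \eqref{boundgrowth} in the Hegselmann--Krause setting, replacing the opinions $x_i$ by the velocities $v_i$, the diameter $d$ by the velocity diameter $d_V$, and the constant $K$ by $\tilde K=\lVert\tilde\psi\rVert_\infty$, now invoking the velocity analogues \eqref{scalpr-1cs} and \eqref{dist-1cs} (Lemmas \ref{L1-1cs} and \ref{L2-1cs}) in place of \eqref{scalpr-1} and \eqref{dist-1}.

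First I would fix $n\in\mathbb{N}_0$ and choose indices $i,j$ with $d_V(t_{2n+2})=\lvert v_i(t_{2n+2})-v_j(t_{2n+2})\rvert$. If this quantity is $0$ the inequality is immediate (by \eqref{crecs}), so one may assume it is positive and set $v:=(v_i(t_{2n+2})-v_j(t_{2n+2}))/\lvert v_i(t_{2n+2})-v_j(t_{2n+2})\rvert$, a unit vector for which $d_V(t_{2n+2})=\langle v_i(t_{2n+2})-v_j(t_{2n+2}),v\rangle$. Introducing $M:=\max_{k=1,\dots,N}\langle v_k(t_{2n+2}),v\rangle$ and $m:=\min_{k=1,\dots,N}\langle v_k(t_{2n+2}),v\rangle$, so that $M-m\le d_V(t_{2n+2})$, the estimate \eqref{scalpr-1cs} gives $m\le\langle v_k(t),v\rangle\le M$ for every $k=1,\dots,N$ and every $t\in[t_{2n+1},t_{2n+2}]$.

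Next, on the bad interval $(t_{2n+1},t_{2n+2})$ one has $\alpha\equiv-1$. Differentiating $t\mapsto\langle v_i(t),v\rangle$, discarding the contribution of the non-positive term $m-\langle v_i(t),v\rangle$ and using the bound $\tilde\psi\le\tilde K$ together with $\langle v_i(t),v\rangle-m\ge 0$, I would obtain the one-sided differential inequality $\frac{d}{dt}\langle v_i(t),v\rangle\le\tilde K(\langle v_i(t),v\rangle-m)$, whence Gronwall's inequality yields $\langle v_i(t),v\rangle\le e^{\tilde K(t-t_{2n+1})}\langle v_i(t_{2n+1}),v\rangle+m(1-e^{\tilde K(t-t_{2n+1})})$. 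The mirror-image computation for $\langle v_j(t),v\rangle$, using this time the upper bound $\langle v_k(t),v\rangle\le M$, gives $\langle v_j(t),v\rangle\ge e^{\tilde K(t-t_{2n+1})}\langle v_j(t_{2n+1}),v\rangle+M(1-e^{\tilde K(t-t_{2n+1})})$. Subtracting the two, and using $\langle v_i(t_{2n+1})-v_j(t_{2n+1}),v\rangle\le d_V(t_{2n+1})$ together with $M-m\le d_V(t_{2n+2})$, one arrives at $\langle v_i(t)-v_j(t),v\rangle\le e^{\tilde K(t-t_{2n+1})}d_V(t_{2n+1})+d_V(t_{2n+2})(e^{\tilde K(t-t_{2n+1})}-1)$ for all $t\in[t_{2n+1},t_{2n+2}]$.

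Finally, evaluating at $t=t_{2n+2}$ turns the left-hand side into $d_V(t_{2n+2})$ and rearranges to $d_V(t_{2n+2})\bigl(2-e^{\tilde K(t_{2n+2}-t_{2n+1})}\bigr)\le e^{\tilde K(t_{2n+2}-t_{2n+1})}d_V(t_{2n+1})$; since \eqref{tntilde} forces $\tilde K(t_{2n+2}-t_{2n+1})<\ln 2$, the factor $2-e^{\tilde K(t_{2n+2}-t_{2n+1})}$ is strictly positive and dividing by it gives \eqref{boundgrowthcs}. I do not anticipate a real obstacle: the only delicate point is the sign bookkeeping proper to the repulsive regime — one must pair the lower bound $m$ with the $v_i$-equation and the upper bound $M$ with the $v_j$-equation so that the two Gronwall estimates are mutually compatible — and the self-referential appearance of $d_V(t_{2n+2})$ on the right-hand side can be absorbed only because the smallness hypothesis \eqref{tntilde} on the length of the bad intervals keeps $2-e^{\tilde K(t_{2n+2}-t_{2n+1})}$ bounded away from $0$.
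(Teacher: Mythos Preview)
Your proposal is correct and is precisely the argument the paper has in mind: the paper omits the proof, stating only that it is analogous to the Hegselmann--Krause lemma \eqref{boundgrowth}, and your transcription---replacing $x_i$ by $v_i$, $d$ by $d_V$, $K$ by $\tilde K$, and invoking \eqref{scalpr-1cs}, \eqref{crecs}, \eqref{tntilde} in place of their first-order counterparts---carries that argument through without change.
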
 
\begin{prop}
	Assume \eqref{sum1tilde}. Then, for all $t\geq 0$, it holds that
	\begin{equation}\label{veldiambound}
		d_{V}(t)\leq \bar{M}^{0}, 
	\end{equation}
where
\begin{equation}\label{boundunifcs}
	\bar{M}^{0}:=e^{\sum_{p=0}^{\infty}\ln \left(\frac{e^{\tilde K(t_{2n+2}-t_{2n+1})}}{2-e^{\tilde K(t_{2n+2}-t_{2n+1})}}\right)}d_{V}(0).
\end{equation}
\end{prop}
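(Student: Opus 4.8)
The plan is to reproduce for the velocity diameter $d_V$ the geometric--type iteration already carried out for the opinion diameter in the proof of Theorem~\ref{consnotgen}. First I would combine, over a good interval followed by a bad one, the non-expansiveness \eqref{deccs}, i.e. $d_V(t_{2n+1})\le d_V(t_{2n})$, with the controlled growth \eqref{boundgrowthcs} to obtain, for every $n\in\mathbb{N}_{0}$,
$$
d_V(t_{2n+2})\le \frac{e^{\tilde K(t_{2n+2}-t_{2n+1})}}{2-e^{\tilde K(t_{2n+2}-t_{2n+1})}}\,d_V(t_{2n+1})\le \frac{e^{\tilde K(t_{2n+2}-t_{2n+1})}}{2-e^{\tilde K(t_{2n+2}-t_{2n+1})}}\,d_V(t_{2n}).
$$
A routine induction then yields $d_V(t_{2n})\le \bigl(\prod_{p=0}^{n-1}\tfrac{e^{\tilde K(t_{2p+2}-t_{2p+1})}}{2-e^{\tilde K(t_{2p+2}-t_{2p+1})}}\bigr)d_V(0)$, and similarly $d_V(t_{2n+2})\le \bigl(\prod_{p=0}^{n}\tfrac{e^{\tilde K(t_{2p+2}-t_{2p+1})}}{2-e^{\tilde K(t_{2p+2}-t_{2p+1})}}\bigr)d_V(0)$, for all $n$.

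Next I would replace these finite products by the infinite one. Since $t_{2p+2}>t_{2p+1}$ and $\tilde K>0$, each numerator satisfies $e^{\tilde K(t_{2p+2}-t_{2p+1})}>1$, while \eqref{tntilde} ensures $0<2-e^{\tilde K(t_{2p+2}-t_{2p+1})}<1$; hence every factor of the product is $\ge1$, the partial products are nondecreasing in $n$, and
$$
d_V(t_{2n})\le \prod_{p=0}^{\infty}\frac{e^{\tilde K(t_{2p+2}-t_{2p+1})}}{2-e^{\tilde K(t_{2p+2}-t_{2p+1})}}\;d_V(0)=e^{\sum_{p=0}^{\infty}\ln\left(\frac{e^{\tilde K(t_{2p+2}-t_{2p+1})}}{2-e^{\tilde K(t_{2p+2}-t_{2p+1})}}\right)}d_V(0)=\bar M^0,
$$
the series in the exponent being finite precisely by the standing hypothesis \eqref{sum1tilde}; the very same estimate applies to $d_V(t_{2n+2})$.

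Finally, for an arbitrary $t\ge0$ I would pick $n\in\mathbb{N}_{0}$ with $t\in[t_{2n},t_{2n+2})$ and distinguish two cases. If $t\in[t_{2n},t_{2n+1}]$, then \eqref{distcs} gives $d_V(t)\le d_V(t_{2n})\le\bar M^0$; if $t\in(t_{2n+1},t_{2n+2})$, then \eqref{dist-1cs} gives $d_V(t)\le d_V(t_{2n+2})\le\bar M^0$. In both cases \eqref{veldiambound} holds. I do not foresee any genuine obstacle here; the only point requiring a little care is checking that every factor of the product is $\ge1$ --- this is what permits the $n$-dependent bound at $t_{2n}$ to be upgraded to the $N$-independent constant $\bar M^0$ --- together with the convergence of the exponent series, and both facts follow at once from \eqref{tntilde} and \eqref{sum1tilde}.
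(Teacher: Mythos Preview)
Your proposal is correct and follows essentially the same route as the paper: chain \eqref{deccs} with \eqref{boundgrowthcs}, iterate to bound $d_V(t_{2n})$ by a finite product, pass to the infinite product using \eqref{sum1tilde}, and then handle a general $t$ via \eqref{distcs} or \eqref{dist-1cs} according to whether $t$ lies in a good or a bad subinterval. Your write-up is in fact slightly more careful than the paper's (correct product index $\prod_{p=0}^{n-1}$ for $d_V(t_{2n})$, and an explicit justification that each factor is $\ge 1$ so the partial products are monotone), but the argument is the same.
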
	
\begin{proof}
For all $n\in \mathbb{N}_{0}$, from \eqref{deccs} and \eqref{boundgrowthcs} we have that
	$$d_{V}(t_{2n+2})\leq \frac{e^{\tilde K(t_{2n+2}-t_{2n+1})}}{2-e^{\tilde K(t_{2n+2}-t_{2n+1})}}d_{V}(t_{2n+1})\leq \frac{e^{\tilde K(t_{2n+2}-t_{2n+1})}}{2-e^{\tilde K(t_{2n+2}-t_{2n+1})}}d_{V}(t_{2n}).$$
	Then, $$d_{V}(t_{2n})\leq \prod_{p=0}^{n}\left(\frac{e^{\tilde K(t_{2n+2}-t_{2n+1})}}{2-e^{\tilde K(t_{2n+2}-t_{2n+1})}}\right)d_{V}(0)=e^{\sum_{p=0}^{n}\ln \left(\frac{e^{\tilde K(t_{2n+2}-t_{2n+1})}}{2-e^{\tilde K(t_{2n+2}-t_{2n+1})}}\right)}d_{V}(0)$$$$\leq e^{\sum_{p=0}^{\infty}\ln \left(\frac{e^{\tilde K(t_{2n+2}-t_{2n+1})}}{2-e^{\tilde K(t_{2n+2}-t_{2n+1})}}\right)}d_{V}(0).$$
	We have so proven that
	\begin{equation}
		d_{V}(t_{2n})\leq \bar{M}^{0},\quad \forall n \in \mathbb{N}_{0}.
	\end{equation}
	As a consequence, for all $t\geq 0$, since $t\in [t_{2n},t_{2n+2})$, for some $n\in \mathbb{N}_{0}$, using \eqref{dist} and \eqref{dist-1} we can conclude that \eqref{veldiambound} holds true.
	\end{proof}

Now, we pick $T> \frac{\ln 2}{\tilde K}$. We can assume, eventually splitting the intervals of positive interaction into subintervals of length at most $T$, that
\begin{equation}\label{boundbuoni}
	t_{2n+1}-t_{2n}\leq T,\quad \forall n\in \mathbb{N}_0.
\end{equation}
As a consequence, from \eqref{tntilde} and \eqref{boundbuoni}, being $T> \frac{\ln 2}{\tilde K}$, we can write 
\begin{equation}\label{boundbc}
	t_{n+1}-t_{n}\leq T,\quad \forall n\in \mathbb{N}_0.
\end{equation} 

\begin{defn}
	For all $t\geq 0$, we define
	\begin{equation}\label{psit}
		\tilde\psi_{t}:=\min\left\{\tilde\psi(r):r\in \left[0,\max_{s\in [0,t]}d_X(s)\right]\right\},
	\end{equation}
	and
	\begin{equation}\label{defphi}
		\phi(t):=\min\left\{e^{-\tilde KT}\tilde\psi_{t},\frac{e^{-\tilde KT}}{T}\right\}.
	\end{equation}
\end{defn}
\begin{oss}
	Let us note that, for all $t\geq 0$ and $i,j=1,\dots,N$, 
	$$\lvert x_{i}(t)-x_{j}(t)\rvert\leq \max_{s\in [0,t]}d_X(s).$$
	As a consequence, it holds that 
	\begin{equation}\label{infbou}
		\tilde\psi (\lvert x_{i}(t)-x_{j}(t)\rvert)\geq \tilde\psi_{t}>0, \quad \forall t\geq 0,\,i,j=1,\dots,N.
	\end{equation}
\end{oss}

\begin{prop}\label{L5cs}
	For all $n\in \mathbb{N}_{0}$, 
	\begin{equation}\label{n-2cs}
		d_{V}(t_{2n+1})\leq \left(1-\int_{t_{2n}}^{t_{2n+1}}\phi(s)ds\right)d_{V}(t_{2n}).
	\end{equation}
\end{prop}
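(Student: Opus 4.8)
The plan is to mimic the structure of the proof of Proposition \ref{L5} (Lemma \ref{L5}), but with the first-order dynamics in the Cucker--Smale system now acting on the velocities $v_i$ instead of the opinions $x_i$, and with the constant lower bound $\psi_0$ replaced by the time-dependent bound $\tilde\psi_t$ coming from \eqref{infbou}. Fix $n\in\mathbb{N}_0$. If $d_V(t_{2n+1})=0$ the inequality is trivial since $\phi\geq 0$, so assume $d_V(t_{2n+1})>0$. Pick $i,j$ with $d_V(t_{2n+1})=\lvert v_i(t_{2n+1})-v_j(t_{2n+1})\rvert$ and set $v=(v_i(t_{2n+1})-v_j(t_{2n+1}))/\lvert v_i(t_{2n+1})-v_j(t_{2n+1})\rvert$, so that $d_V(t_{2n+1})=\langle v_i(t_{2n+1})-v_j(t_{2n+1}),v\rangle$. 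Write $M_{t_{2n}}=\max_l\langle v_l(t_{2n}),v\rangle$ and $m_{t_{2n}}=\min_l\langle v_l(t_{2n}),v\rangle$, so $M_{t_{2n}}-m_{t_{2n}}\leq d_V(t_{2n})$, and recall from Lemma \ref{L1cs} that $m_{t_{2n}}\leq\langle v_k(t),v\rangle\leq M_{t_{2n}}$ for all $k$ and all $t\in[t_{2n},t_{2n+1}]$.

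As in Lemma \ref{L5}, I would split into two cases. \textit{Case I:} there exists $\bar t\in[t_{2n},t_{2n+1})$ with $\langle v_i(\bar t)-v_j(\bar t),v\rangle<0$. Then applying the velocity analogue \eqref{4cs} of Proposition \ref{L4cs} on $[\bar t,t_{2n+1}]$ gives $d_V(t_{2n+1})\leq(1-e^{-\tilde K(t_{2n+1}-\bar t)})d_V(t_{2n})\leq(1-e^{-\tilde K(t_{2n+1}-t_{2n})})d_V(t_{2n})$. Using $t_{2n+1}-t_{2n}\leq T$ from \eqref{boundbuoni}, one has $e^{-\tilde K(t_{2n+1}-t_{2n})}\geq e^{-\tilde KT}$, and I want to bound the deficit below by $\int_{t_{2n}}^{t_{2n+1}}\phi(s)\,ds$; since $\phi(s)\leq e^{-\tilde KT}/T$ on an interval of length $\leq T$, the integral is $\leq e^{-\tilde KT}\leq e^{-\tilde K(t_{2n+1}-t_{2n})}$, which yields $1-e^{-\tilde K(t_{2n+1}-t_{2n})}\leq 1-\int_{t_{2n}}^{t_{2n+1}}\phi(s)\,ds$, giving \eqref{n-2cs}. \textit{Case II:} $\langle v_i(t)-v_j(t),v\rangle\geq 0$ on all of $[t_{2n},t_{2n+1})$. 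Here I differentiate $\langle v_i(t)-v_j(t),v\rangle$ along \eqref{onoffcs}, write each $\langle v_l(t),v\rangle-M_{t_{2n}}$ and $m_{t_{2n}}-\langle v_l(t),v\rangle$ as the $\psi$-weighted ``good'' terms, and use the uniform lower bound $\tilde\psi(\lvert x_i(t)-x_l(t)\rvert)\geq\tilde\psi_t\geq\tilde\psi_{t_{2n+1}}$ (monotonicity of $t\mapsto\max_{s\in[0,t]}d_X(s)$) on those nonpositive contributions, together with $\tilde\psi\leq\tilde K$ on the remaining ones, exactly as in the derivation of the bound $\frac{d}{dt}\langle v_i-v_j,v\rangle\leq(\tilde K-\tilde\psi_t)(M_{t_{2n}}-m_{t_{2n}})-\tilde K\langle v_i-v_j,v\rangle$. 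Gronwall's inequality on $[t_{2n},t_{2n+1}]$ and $M_{t_{2n}}-m_{t_{2n}}\leq d_V(t_{2n})$ then give $d_V(t_{2n+1})\leq\big(1-\frac{1}{\tilde K}(1-e^{-\tilde K(t_{2n+1}-t_{2n})})\int_{t_{2n}}^{t_{2n+1}}\!\!(\text{something})\big)d_V(t_{2n})$; more carefully, keeping $\tilde\psi_s$ inside the integral one obtains $d_V(t_{2n+1})\leq\big(1-e^{-\tilde K(t_{2n+1}-t_{2n})}\int_{t_{2n}}^{t_{2n+1}}\tilde\psi_s\,ds\big)d_V(t_{2n})$, or a variant thereof.

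The main technical point — and the step I expect to be the real obstacle — is matching the estimate coming out of Gronwall in Case II to the precise expression $1-\int_{t_{2n}}^{t_{2n+1}}\phi(s)\,ds$ with $\phi$ as defined in \eqref{defphi}. The role of the $\min$ with $e^{-\tilde KT}/T$ in the definition of $\phi$ is precisely to guarantee that $\int_{t_{2n}}^{t_{2n+1}}\phi(s)\,ds\leq 1$ (so the factor stays nonnegative) while being dominated by the $e^{-\tilde KT}\tilde\psi_s$ term whenever $\tilde\psi_s$ is small; one has to check that the factor $e^{-\tilde K(t_{2n+1}-t_{2n})}\geq e^{-\tilde KT}$ produced by Gronwall, times the integral of $\tilde\psi_s$, dominates $\int_{t_{2n}}^{t_{2n+1}}\phi(s)\,ds$ term by term in $s$, using $\tilde\psi_s\geq\tilde\psi_{t_{2n+1}}$ and $t_{2n+1}-t_{2n}\leq T$. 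Once both cases are reduced to ``deficit $\geq\int_{t_{2n}}^{t_{2n+1}}\phi(s)\,ds$'', setting $1-\int_{t_{2n}}^{t_{2n+1}}\phi(s)\,ds$ as the contraction factor (which lies in $(0,1]$ by the $e^{-\tilde KT}/T$ truncation and \eqref{boundbuoni}) completes the proof of \eqref{n-2cs}.
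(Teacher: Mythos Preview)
Your proposal is correct and follows essentially the same route as the paper: the same unit-vector reduction, the same two-case split, Proposition~\ref{L4cs} for Case~I, and in Case~II the differential inequality $\frac{d}{dt}\langle v_i-v_j,v\rangle\leq(\tilde K-\tilde\psi_t)(M_{t_{2n}}-m_{t_{2n}})-\tilde K\langle v_i-v_j,v\rangle$ followed by Gronwall. The only clarification is that the ``obstacle'' you flag is not one: after Gronwall the exact bound is $d_V(t_{2n+1})\leq\big(1-\int_{t_{2n}}^{t_{2n+1}}\tilde\psi_s\,e^{-\tilde K(t_{2n+1}-s)}ds\big)d_V(t_{2n})$, and since $e^{-\tilde K(t_{2n+1}-s)}\geq e^{-\tilde KT}$ and $\phi(s)\leq e^{-\tilde KT}\tilde\psi_s$ pointwise, the passage to $1-\int_{t_{2n}}^{t_{2n+1}}\phi(s)\,ds$ is immediate (your ``more carefully'' version is already enough, and the detour through $\tilde\psi_{t_{2n+1}}$ is unnecessary).
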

\begin{oss}
	Let us note that $$\int_{t_{2n}}^{t_{2n+1}}\phi(s)ds\in (0,1),\quad \forall n\in \mathbb{N}_0,$$
	so that $$1-\int_{t_{2n}}^{t_{2n+1}}\phi(s)ds\in (0,1).$$
\end{oss}
\begin{proof}[Proof of Proposition \ref{L5cs}]
	Let $n\in \mathbb{N}_{0}$. Trivially, if $d_V(t_{2n+1})=0$, then of course inequality \eqref{n-2cs} holds. So, suppose $d_V(t_{2n+1})>0$. Let $i,j=1,\dots,N$ be such that $d_V(t_{2n+1})=\lvert v_{i}(t_{2n+1})-v_{j}(t_{2n+1})\rvert$. We set $$v=\frac{v_{i}(t_{2n+1})-v_{j}(t_{2n+1})}{\lvert v_{i}(t_{2n+1})-v_{j}(t_{2n+1})\rvert}.$$
	Then, $v$ is a unit vector for which we can write $$d_V(t_{2n+1})=\langle v_{i}(t_{2n+1})-v_{j}(t_{2n+1}),v\rangle.$$
	Let us define $$M_{t_{2n}}=\max_{l=1,\dots,N} v_{l}(t_{2n}),v\rangle,$$
	$$m_{t_{2n}}=\min_{l=1,\dots,N}\langle v_{l}(t_{2n}),v\rangle.$$
	Then $M_{t_{2n}}-m_{t_{2n}}\leq d_V(t_{2n})$. 
	\\Now, we distinguish two different situations.
	\par\textit{Case I.} Assume that there exists $\bar{t}\in [t_{2n},t_{2n+1}]$ such that 
	$$\langle v_{i}(\bar{t})-v_{j}(\bar{t}),v\rangle<0.$$
	Then, from \eqref{4cs} with $t_{2n+1}\geq \bar{t}\geq t_{2n}$, we have \begin{equation}\label{t0cs}
		\begin{split}
			d_V(t_{2n+1})&\leq e^{-\tilde K(t_{2n+1}-\bar{t})}\langle v_{i}(\bar{t})-v_{j}(\bar{t}),v\rangle+(1-e^{-\tilde K(t_{2n+1}-\bar{t})})d_V(t_{2n})\\&\leq (1-e^{-\tilde K(t_{2n+1}-\bar{t})})d_V(t_{2n})\\&\leq (1-e^{-\tilde KT})d_V(t_{2n})\\&\leq \left(1-\int_{t_{2n}}^{t_{2n+1}}\phi(s)ds\right)d_V(t_{2n}).
		\end{split}
	\end{equation}
	\par\textit{Case II.} Assume it rather holds \begin{equation}\label{poscs}
		\langle v_{i}(t)-v_{j}(t),v\rangle\geq 0,\quad \forall t\in [t_{2n},t_{2n+1}].
	\end{equation}
	Then, for every  $t\in [t_{2n},t_{2n+1}]$, we have that 
	$$
	\begin{array}{l}
		\displaystyle{
			\frac{d}{dt}\langle v_{i}(t)-v_{j}(t),v\rangle=\frac{1}{N-1}\sum_{l:l\neq i}\tilde \psi(\lvert x_{i}(t)-x_{j}(t)\rvert)\langle v_{l}(t)-v_{i}(t),v\rangle}\\
		\displaystyle{\hspace{1.1cm}-\frac{1}{N-1}\sum_{l:l\neq j}\tilde\psi(\lvert x_{i}(t)-x_{j}(t)\rvert)\langle v_{l}(t)-v_{j}(t),v\rangle}\\
		\displaystyle{\hspace{0.6cm}=\frac{1}{N-1}\sum_{l:l\neq i}\tilde\psi(\lvert x_{i}(t)-x_{j}(t)\rvert)(\langle v_{l}(t),v\rangle-M_{t_{2n}}+M_{t_{2n}}-\langle v_{i}(t),v\rangle)}\\
		\displaystyle{\hspace{1.1cm}+\frac{1}{N-1}\sum_{l:l\neq j}\tilde\psi(\lvert x_{i}(t)-x_{j}(t)\rvert)(\langle v_{j}(t),v\rangle-m_{t_{2n}}+m_{t_{2n}}-\langle v_{l}(t),v\rangle)}\\
		\displaystyle{\hspace{5.5 cm}:=S_1+S_2.}
	\end{array}
	$$
	Now, being $t\in [t_{2n},t_{2n+1}]$,  from \eqref{scalprcs} we have that
	\begin{equation}\label{7cs}
		m_{t_{2n}}\leq\langle v_{k}(t),v\rangle\leq M_{t_{2n}}, \quad\forall k=1,\dots,N.
	\end{equation}
	Therefore, we get
	$$
	\begin{array}{l}
		\displaystyle{
			S_1=\frac{1}{N-1}\sum_{l:l\neq i}\tilde\psi(\lvert x_{i}(t)-x_{j}(t)\rvert)(\langle v_{l}(t),v\rangle-M_{t_{2n}})}\\
		\displaystyle{\hspace{0.7 cm}+\frac{1}{N-1}\sum_{l:l\neq i}\tilde\psi(\lvert x_{i}(t)-x_{j}(t)\rvert)(M_{t_{2n}}-\langle v_{i}(t),v\rangle)}\\
		\displaystyle{\hspace{0.4 cm}\leq \frac{1}{N-1}\tilde\psi_{t}\sum_{l:l\neq i}(\langle v_{l}(t),v\rangle-M_{t_{2n}})+\tilde K(M_{t_{2n}}-\langle v_{i}(t),v\rangle)},
	\end{array}
	$$
	and	
	$$
	\begin{array}{l}
		\displaystyle{
			S_2=\frac{1}{N-1}\sum_{l:l\neq j}\tilde\psi(\lvert x_{i}(t)-x_{j}(t)\rvert)(\langle v_{j}(t),v\rangle-m_{t_{2n}})}\\
		\displaystyle{\hspace{0.7 cm}+\frac{1}{N-1}\sum_{l:l\neq j}\tilde\psi(\lvert x_{i}(t)-x_{j}(t)\rvert)(m_{t_{2n}}-\langle v_{l}(t),v\rangle)}\\
		\displaystyle{\hspace{0.4 cm}\leq \tilde K(\langle v_{j}(t),v\rangle-m_{t_{2n}})+\frac{1}{N-1}\tilde\psi_{t}\sum_{l:l\neq j}(m_{t_{2n}}-\langle v_{l}(t),v\rangle).}
	\end{array}
	$$
	Combining this last fact with \eqref{7cs} it comes that 
	$$\begin{array}{l}
		\vspace{0.2cm}\displaystyle{\frac{d}{dt}\langle v_{i}(t)-v_{j}(t),v\rangle\leq \tilde K(M_{t_{2n}}-m_{t_{2n}}-\langle v_{i}(t)-v_{j}(t),v\rangle)}\\
		\vspace{0.2cm}\displaystyle{\hspace{1.8 cm}+\frac{1}{N-1}\tilde\psi_{t}\sum_{l:l\neq i,j}(\langle v_{l}(t),v\rangle-M_{t_{2n}}+m_{t_{2n}}-\langle v_{l}(t),v\rangle)}\\
		\vspace{0.2cm}\displaystyle{\hspace{1.8 cm}+\frac{1}{N-1}\tilde\psi_{t}(\langle v_{j}(t),v\rangle-M_{t_{2n}}+m_{t_{2n}}-\langle v_{i}(t),v\rangle)}\\
		\vspace{0.2cm}\displaystyle{\hspace{1.5 cm}=\tilde \tilde K(M_{t_{2n}}-m_{t_{2n}})-\tilde K\langle v_{i}(t)-v_{j}(t),v\rangle+\frac{N-2}{N-1}\tilde\psi_{t}(-M_{t_{2n}}+m_{t_{2n}})}\\
		\vspace{0.2cm}\displaystyle{\hspace{1.8 cm}+\frac{1}{N-1}\tilde\psi_{t}(\langle v_{j}(t),v\rangle-M_{t_{2n}}+m_{t_{2n}}-\langle v_{i}(t_{2n}),v\rangle)}.
	\end{array}
	$$
	Now, from \eqref{poscs} we get
	$$\begin{array}{l}
		\vspace{0.2cm}\displaystyle{
			\frac{d}{dt}\langle v_{i}(t)-v_{j}(t),v\rangle\leq \tilde K(M_{t_{2n}}-m_{t_{2n}})-\tilde K\langle v_{i}(t)-v_{j}(t),v\rangle
		}\\
		\vspace{0.4 cm}\displaystyle{\hspace{1.8 cm}
			+\frac{N-2}{N-1}\tilde\psi_{t}(-M_{t_{2n}}+m_{t_{2n}})+\frac{1}{N-1}\tilde\psi_{t}(-M_{t_{2n}}+m_{t_{2n}})
		}\\
		\vspace{0.4 cm}\displaystyle{\hspace{1.8 cm}
			-\frac{1}{N-1}\tilde\psi_{t}\langle v_{i}(t)-v_{j}(t),v\rangle
		}\\
		\vspace{0.3 cm}\displaystyle{\hspace{1.5 cm}\leq \tilde K(M_{t_{2n}}-m_{t_{2n}})-\tilde K\langle v_{i}(t)-v_{j}(t),v\rangle+\tilde\psi_{t}(-M_{t_{2n}}+m_{t_{2n}})
		}\\
		\displaystyle{\hspace{1.5 cm}=\left(\tilde K-\tilde\psi_{t}\right)(M_{t_{2n}}-m_{t_{2n}})-\tilde K\langle v_{i}(t)-v_{j}(t),v\rangle.}
	\end{array}$$
	Hence, from Gronwall's inequality it comes that $$\langle v_{i}(t)-v_{j}(t),v\rangle \leq e^{-\tilde K(t-t_{2n})}\langle v_{i}(t_{2n})-v_{j}(t_{2n}),v\rangle$$$$+(M_{t_{2n}}-m_{t_{2n}})\int_{t_{2n}}^{t}\left(\tilde K-\tilde\psi_{s}\right)e^{-\tilde K(t-s)}ds,$$
	for all $t\in [t_{2n},t_{2n+1}]$. In particular, for $t= t_{2n+1}$, from \eqref{distcs} it comes that 
	$$\begin{array}{l}
		\vspace{0.2cm}\displaystyle{d_V(t_{2n+1})\leq e^{-\tilde K(t_{2n+1}-t_{2n})}\langle v_{i}(t_{2n})-v_{j}(t_{2n}),v\rangle+(M_{t_{2n}}-m_{t_{2n}})\int_{t_{2n}}^{t_{2n+1}}(\tilde K-\tilde \psi_{s})e^{-\tilde K(t_{2n+1}-s)}ds}\\
		\vspace{0.2cm}\displaystyle{\hspace{1.3cm}\leq e^{-\tilde K(t_{2n+1}-t_{2n})}\lvert v_{i}(t_{2n})-v_{j}(t_{2n})\rvert +(M_{t_{2n}}-m_{t_{2n}})\int_{t_{2n}}^{t_{2n+1}}(\tilde K-\tilde \psi_{s})e^{-\tilde K(t_{2n+1}-s)}ds}\\
		\vspace{0.2cm}\displaystyle{\hspace{1.3cm}\leq \left(e^{-\tilde K(t_{2n+1}-t_{2n})} +\tilde K\int_{t_{2n}}^{t_{2n+1}}e^{-\tilde K(t_{2n+1}-s)}ds-\int_{t_{2n}}^{t_{2n+1}}\tilde\psi_{s}e^{-\tilde K(t_{2n+1}-s)}ds\right)d_V(t_{2n})}\\
		\vspace{0.2cm}\displaystyle{\hspace{1.3cm}= \left(e^{-\tilde K(t_{2n+1}-t_{2n})} +1-e^{-\tilde K(t_{2n+1}-t_{2n})}-\int_{t_{2n}}^{t_{2n+1}}\tilde\psi_{s}e^{-\tilde K(t_{2n+1}-s)}ds\right)d_V(t_{2n})}\\
		\vspace{0.2cm}\displaystyle{\hspace{1.3cm}=\left(1-\int_{t_{2n}}^{t_{2n+1}}\tilde \psi_{s}e^{-\tilde K(t_{2n+1}-s)}ds\right)d_V(t_{2n})}\\
		\vspace{0.2cm}\displaystyle{\hspace{1.3cm}\leq \left(1-e^{-\tilde KT}\int_{t_{2n}}^{t_{2n+1}}\tilde \psi_{s}ds\right)d_V(t_{2n})}\\
		\vspace{0.2cm}\displaystyle{\hspace{1.3cm}\leq\left(1-\int_{t_{2n}}^{t_{2n+1}}\phi(s)ds\right)d_V(t_{2n}).}
	\end{array}$$
	So, taking into account \eqref{t0cs}, we can conclude that \eqref{n-2cs} holds.
\end{proof}

\begin{proof}[Proof of Theorem \ref{af}]
	Let $\{(x_{i},v_{i})\}_{i=1,\dots,N}$  be solution to \eqref{onoffcs} under the initial conditions \eqref{incondcs}. We define the function $\mathcal{D}:[0,\infty)\rightarrow [0,\infty),$ 
	$$\mathcal{D}(t):=\begin{cases}
		d_{V}(0), &t=0,\\\left(1-\int_{t_{2n}}^{t}\phi(s)ds\right)d_V(t_{2n}), &t\in (t_{2n},t_{2n+1}],\,n\in \mathbb{N}_0,\\\left(1-\int_{t_{2n+1}}^{t}\phi(s)ds\right)d_V(t_{2n+2}),&t\in (t_{2n+1},t_{2n+2}],\,n\in \mathbb{N}_0.
	\end{cases}$$
	By construction, $\mathcal{D}$ is piecewise continuous. Indeed, $\mathcal{D}$ is continuous everywhere except at points $t_n$, $n\in\mathbb{N}_0$. Moreover, for all $n\in \mathbb{N}$, $n\ge 1,$
	\begin{equation}\label{limt2n}
		\lim_{t\to t^{+}_{2n}}\mathcal{D}(t)=d_V(t_{2n})\geq \left(1-\int_{t_{2n-1}}^{t_{2n}}\phi(s)ds\right)d_V(t_{2n})=\mathcal{D}(t_{2n}),
	\end{equation}
	\begin{equation}\label{limt2n+1}
		\begin{array}{l}
			\displaystyle{\lim_{t\to t^{+}_{2n+1}}\mathcal{D}(t)=d_V(t_{2n+2})\leq \frac{e^{\tilde K(t_{2n+2}-t_{2n+1})}}{2-e^{\tilde K(t_{2n+2}-t_{2n+1})}}d_V(t_{2n+1})}\\
\displaystyle{\hspace{1,8 cm}\leq \frac{e^{\tilde K(t_{2n+2}-t_{2n+1})}}{2-e^{\tilde K(t_{2n+2}-t_{2n+1})}}\left(1-\int_{t_{2n}}^{t_{2n+1}}\phi(s)ds\right)d_V(t_{2n})}\\
\displaystyle{
\hspace{2,5 cm}
=\frac{e^{\tilde K(t_{2n+2}-t_{2n+1})}}{2-e^{\tilde K(t_{2n+2}-t_{2n+1})}}\mathcal{D}(t_{2n+1}).}
		\end{array}
	\end{equation}
	Also, $\mathcal{D}$ is nonincreasing in all intervals of the form $(t_{n},t_{n+1}]$, $n\in \mathbb{N}_0$.
	\\Now, notice that, for almost all times $t,$ 
$$\frac{d}{dt}\max_{s\in [0,t]}d_{X}(s)\leq \left\lvert \frac{d}{dt}d_{X}(t)\right\rvert,$$
	since $\max_{s\in [0,t]}d_{X}(s)$ is constant or increases like $d_{X}(t)$. Moreover, for almost all times $$\left\lvert \frac{d}{dt}d_{X}(t)\right\rvert\leq d_{V}(t).$$
	Therefore, for almost all times
	\begin{equation}\label{maxcs}
		\frac{d}{dt}\max_{s\in [0,t]}d_{X}(s)\leq \left\lvert \frac{d}{dt}d_{X}(t)\right\rvert\leq  d_{V}(t).
	\end{equation}
	Next, we define the function $\mathcal{L}:[0,\infty)\rightarrow [0,\infty)$ as follows: $$\mathcal{L}(t):=\mathcal{D}(t)+\int_{0}^{\underset{s\in [0,t]}{\max}d_{X}(s)}\min\left\{e^{- \tilde KT}\min_{\sigma\in [0,r]}\tilde \psi(\sigma),\frac{e^{-\tilde KT}}{T}\right\}dr,$$
	for all $t\geq0$. By definition, $\mathcal{L}$ is piecewise continuous, i.e. $\mathcal{L}$ is continuous everywhere except at points $t_n$, $n\in \mathbb{N}_0$. \\In addition, for each $n\in \mathbb{N}$ and for all $t\in(t_{2n},t_{2n+1}) $, we have that $$\frac{d}{dt}\mathcal{L}(t)=\frac{d}{dt}\mathcal{D}(t)+\min\left\{e^{-\tilde KT}\tilde \psi_{t},\frac{e^{-\tilde KT}}{T}\right\}\frac{d}{dt}\underset{s\in [0,t]}{\max}d_{X}(s)$$$$=\frac{d}{dt}\mathcal{D}(t)+\phi(t)\frac{d}{dt}\underset{s\in [0,t]}{\max}d_{X}(s),$$
	and from \eqref{maxcs} we get $$\begin{array}{l}
		\vspace{0.3cm}\displaystyle{\frac{d}{dt}\mathcal{L}(t)\leq \frac{d}{dt}\mathcal{D}(t)+\phi(t)d_{V}(t)}\\
		\vspace{0.3cm}\displaystyle{\hspace{1.3cm}=-\phi(t)d_V(t_{2n})+\phi(t)d_{V}(t)}\\
		\displaystyle{\hspace{1.3cm}=\phi(t)(d_V(t)-d_V(t_{2n})).}
	\end{array}$$
	Thus, since $d_V(t)\leq d_V(t_{2n})$ from \eqref{distcs}, we can deduce that
	$$\frac{d}{dt}\mathcal{L}(t)\leq 0,\quad \forall t\in (t_{2n},t_{2n+1}).$$
	As a consequence, for all $t_{2n}<s<t\leq t_{2n+1}$, it comes that
	$$\mathcal{L}(t)\leq \mathcal{L}(s).$$ 
	Letting $s\to t_{2n}^+$, we get
	$$\mathcal{L}(t)\leq \lim_{s\to t_{2n}^+}\mathcal{L}(s)=\lim_{s\to t_{2n}^+}\mathcal{D}(s)+\int_{0}^{\underset{s\in [0,t_{2n}]}{\max}d_{X}(s)}\min\left\{e^{-\tilde KT}\min_{\sigma\in [0,r]}\tilde \psi(\sigma),\frac{e^{-\tilde KT}}{T}\right\}dr,$$
	for all $t\in (t_{2n},t_{2n+1}]$. Thus, using \eqref{tn}, \eqref{distcs} and \eqref{limt2n}, we can write
	$$\begin{array}{l}
		\vspace{0.3cm}\displaystyle{\mathcal{L}(t)\leq d_V(t_{2n})+\int_{0}^{\underset{s\in [0,t_{2n}]}{\max}d_{X}(s)}\min\left\{e^{-\tilde KT}\min_{\sigma\in [0,r]}\tilde\psi(\sigma),\frac{e^{-\tilde KT}}{T}\right\}dr}\\
		\vspace{0.3cm}\displaystyle{\hspace{0.5cm}=\frac{1}{\left(1-\int_{t_{2n-1}}^{t_{2n}}\phi(s)ds\right)}\mathcal{D}(t_{2n})+\int_{0}^{\underset{s\in [0,t_{2n}]}{\max}d_{X}(s)}\min\left\{e^{-\tilde KT}\min_{\sigma\in [0,r]}\tilde\psi(\sigma),\frac{e^{-\tilde KT}}{T}\right\}dr}\\
		\vspace{0.3cm}\displaystyle{\hspace{0.5cm}\leq \frac{1}{\left(1-\int_{t_{2n-1}}^{t_{2n}}\phi(s)ds\right)}\left[\mathcal{D}(t_{2n})+\int_{0}^{\underset{s\in [0,t_{2n}]}{\max}d_{X}(s)}\min\left\{e^{-\tilde KT}\min_{\sigma\in [0,r]}\tilde\psi(\sigma),\frac{e^{-\tilde KT}}{T}\right\}dr\right]}\\
		\displaystyle{\hspace{0.5cm}\leq \frac{1}{\left(1-\int_{t_{2n-1}}^{t_{2n}}\phi(s)ds\right)}\mathcal{L}(t_{2n}),}
	\end{array}$$
	for all $t\in (t_{2n},t_{2n+1}]$. So,
	\begin{equation}\label{lia1}
		\mathcal{L}(t)\leq \frac{1}{\left(1-\int_{t_{2n-1}}^{t_{2n}}\phi(s)ds\right)}\mathcal{L}(t_{2n}),\quad \forall t\in [t_{2n},t_{2n+1}].
	\end{equation}
	On the other hand, for all $t\in (t_{2n+1},t_{2n+2})$, we have that 
	$$\frac{d}{dt}\mathcal{L}(t)=\frac{d}{dt}\mathcal{D}(t)+\min\left\{e^{-\tilde KT}\tilde\psi_{t},\frac{e^{-\tilde KT}}{T}\right\}\frac{d}{dt}\underset{s\in [0,t]}{\max}d_{X}(s)$$$$=\frac{d}{dt}\mathcal{D}(t)+\phi(t)\frac{d}{dt}\underset{s\in [0,t]}{\max}d_{X}(s),$$	
	and from \eqref{maxcs} we get $$\begin{array}{l}
		\vspace{0.3cm}\displaystyle{\frac{d}{dt}\mathcal{L}(t)\leq \frac{d}{dt}\mathcal{D}(t)+\phi(t)d_{V}(t)}\\
		\vspace{0.3cm}\displaystyle{\hspace{1.3cm}=-\phi(t)d_V(t_{2n+2})+\phi(t)d_{V}(t)}\\
		\displaystyle{\hspace{1.3cm}=\phi(t)(d_V(t)-d_V(t_{2n+2})).}
	\end{array}$$
	Thus, since $d_V(t)\leq d_V(t_{2n+2})$ from \eqref{dist-1cs}, we can deduce that
	$$\frac{d}{dt}\mathcal{L}(t)\leq 0,\quad \forall t\in (t_{2n+1},t_{2n+2}).$$
	As a consequence, for all $t_{2n+1}<s<t\leq t_{2n+2}$, it comes that
	$$\mathcal{L}(t)\leq \mathcal{L}(s).$$ 
	Letting $s\to t_{2n+1}^+$, we get
	$$\mathcal{L}(t)\leq \lim_{s\to t_{2n+1}^+}\mathcal{L}(s)=\lim_{s\to t_{2n+1}^+}D(s)+\int_{0}^{\underset{s\in [0,t_{2n+1}]}{\max}d_{X}(s)}\min\left\{e^{-\tilde KT}\min_{\sigma\in [0,r]}\tilde\psi(\sigma),\frac{e^{-\tilde KT}}{T}\right\}dr,$$
	for all $t\in (t_{2n+1},t_{2n+2}]$. Thus, using \eqref{limt2n+1}, we can write
	$$\begin{array}{l}
		\vspace{0.3cm}\displaystyle{\mathcal{L}(t)\leq \frac{e^{\tilde K(t_{2n+2}-t_{2n+1})}}{2-e^{\tilde K(t_{2n+2}-t_{2n+1})}}\mathcal{D}(t_{2n+1})+\int_{0}^{\underset{s\in [0,t_{2n+1}]}{\max}d_{X}(s)}\min\left\{e^{-\tilde KT}\min_{\sigma\in [0,r]}\tilde\psi(\sigma),\frac{e^{-\tilde KT}}{T}\right\}dr}\\	
		\displaystyle{\hspace{4.5cm}\leq \frac{e^{\tilde K(t_{2n+2}-t_{2n+1})}}{2-e^{\tilde K(t_{2n+2}-t_{2n+1})}}\mathcal{L}(t_{2n+1}),}
	\end{array}$$
	for all $t\in (t_{2n+1},t_{2n+2}]$. 
	Therefore, 
	\begin{equation}\label{lia2}
		\mathcal{L}(t)\leq \frac{e^{\tilde K(t_{2n+2}-t_{2n+1})}}{2-e^{\tilde K(t_{2n+2}-t_{2n+1})}}\mathcal{L}(t_{2n+1}),\quad \forall t\in [t_{2n+1},t_{2n+2}].
	\end{equation}
	Now, combining \eqref{lia1} and \eqref{lia2}, it turns out that
	\begin{equation}\label{lia3}
		\mathcal{L}(t_{2n+2})\leq \frac{e^{\tilde K(t_{2n+2}-t_{2n+1})}}{2-e^{\tilde K(t_{2n+2}-t_{2n+1})}}\frac{1}{\left(1-\int_{t_{2n-1}}^{t_{2n}}\phi(s)ds\right)}\mathcal{L}(t_{2n}),\quad \forall n\in \mathbb{N}.
	\end{equation}
	Thus, thanks to an induction argument, from \eqref{lia3} it follows that 
	\begin{equation}\label{lia4}
			\mathcal{L}(t_{2n+2})\leq \prod_{p=1}^{n}\left(\frac{e^{\tilde K(t_{2p+2}-t_{2p+1})}}{2-e^{\tilde K(t_{2p+2}-t_{2p+1})}}\frac{1}{\left(1-\int_{t_{2p-1}}^{t_{2p}}\phi(s)ds\right)}\right)\mathcal{L}(t_2),
	\end{equation}
	for all $n\in \mathbb{N}$. 
	\\Now, let $t\geq t_4$. Then, there exists $n\geq 2$ such that $t\in [t_{2n},t_{2n+2}]$. As a consequence, if $t\in [t_{2n},t_{2n+1}]$, from \eqref{lia1} and \eqref{lia4} with $n-1\geq 1$, we get
	$$\begin{array}{l}
		\vspace{0.3cm}\displaystyle{\mathcal{L}(t)\leq \frac{1}{1-\int_{t_{2n-1}}^{t_{2n}}\phi(s)ds}\mathcal{L}(t_{2n})\leq \frac{e^{\tilde{K}(t_{2n+2}-t_{2n+1})}}{2-e^{\tilde{K}(t_{2n+2}-t_{2n+1})}}\frac{1}{1-\int_{t_{2n-1}}^{t_{2n}}\phi(s)ds}\mathcal{L}(t_{2n})}\\
		\vspace{0.3cm}\displaystyle{\hspace{0.8cm}\leq \frac{e^{\tilde{K}(t_{2n+2}-t_{2n+1})}}{2-e^{\tilde{K}(t_{2n+2}-t_{2n+1})}}\frac{1}{1-\int_{t_{2n-1}}^{t_{2n}}\phi(s)ds}\prod_{p=1}^{n-1}\left(\frac{e^{\tilde K(t_{2p+2}-t_{2p+1})}}{2-e^{\tilde K(t_{2p+2}-t_{2p+1})}}\frac{1}{\left(1-\int_{t_{2p-1}}^{t_{2p}}\phi(s)ds\right)}\right)\mathcal{L}(t_2)}\\
		\displaystyle{\hspace{0.8cm}=\prod_{p=1}^{n}\left(\frac{e^{\tilde K(t_{2p+2}-t_{2p+1})}}{2-e^{\tilde K(t_{2p+2}-t_{2p+1})}}\frac{1}{\left(1-\int_{t_{2p-1}}^{t_{2p}}\phi(s)ds\right)}\right)\mathcal{L}(t_2).}
	\end{array}$$
On the other hand, if $t\in [t_{2n+1},t_{2n+2}]$, from \eqref{lia1}, \eqref{lia2} and \eqref{lia4} it comes that
$$\begin{array}{l}
	\vspace{0.3cm}\displaystyle{\mathcal{L}(t)\leq \frac{e^{\tilde K(t_{2n+2}-t{2n+1})}}{2-e^{\tilde K(t_{2n+2}-t{2n+1})}}\mathcal{L}(t_{2n+1})\leq \frac{e^{\tilde K(t_{2n+2}-t{2n+1})}}{2-e^{\tilde K(t_{2n+2}-t{2n+1})}}\frac{1}{1-\int_{t_{2n-1}}^{t_{2n}}\phi(s)ds}\mathcal{L}(t_{2n})}\\
	\displaystyle{\hspace{1.5cm}\leq \prod_{p=1}^{n}\left(\frac{e^{\tilde K(t_{2p+2}-t_{2p+1})}}{2-e^{\tilde K(t_{2p+2}-t_{2p+1})}}\frac{1}{\left(1-\int_{t_{2p-1}}^{t_{2p}}\phi(s)ds\right)}\right)\mathcal{L}(t_2).}
\end{array}$$
Thus, for all $t\geq t_4$,
\begin{equation}\label{lt}
	\begin{split}
		\mathcal{L}(t)\leq \prod_{p=1}^{n}\left(\frac{e^{\tilde K(t_{2p+2}-t_{2p+1})}}{2-e^{\tilde K(t_{2p+2}-t_{2p+1})}}\frac{1}{\left(1-\int_{t_{2p-1}}^{t_{2p}}\phi(s)ds\right)}\right)\mathcal{L}(t_2)\\=e^{\sum_{p=1}^{n}\left[\ln\left(\frac{e^{\tilde K(t_{2p+2}-t_{2p+1})}}{2-e^{\tilde K(t_{2p+2}-t_{2p+1})}}\right)+\ln\left(\frac{1}{1-\int_{t_{2p-1}}^{t_{2p}}\phi(s)ds}\right)\right]}\mathcal{L}(t_2).
	\end{split}
\end{equation}
Now, from \eqref{sum1tilde}, $\sum_{p=1}^{+\infty}\ln\left(\frac{e^{\tilde K(t_{2n+2}-t_{2p+1})}}{2-e^{\tilde{K}(t_{2p+2}-t_{2p+1})}}\right)<+\infty$. Also, $\sum_{p=1}^{+\infty}\ln\left(\frac{1}{1-\int_{t_{2p-1}}^{t_{2p}}\phi(s)ds}\right)<+\infty$. Indeed, from \eqref{boundbc}, it turns out that $$\int_{t_{2p-1}}^{t_{2p}}\phi(s)ds\leq \frac{e^{-\tilde K T}}{T}(t_{2p}-t_{2p-1}) ,\quad \forall p\geq 1.$$	
	Then, since \eqref{sum1tilde} holds (which implies \eqref{badtilde}), we have that $t_{2p}-t_{2p-1}\to 0$, as $p\to \infty$, and we can write
	$$
\begin{array}{l}
\displaystyle{\ln\left(\frac{1}{1-\int_{t_{2p-1}}^{t_{2p}}\phi(s)ds}\right)\leq \ln\left(\frac{1}{1-\frac{e^{-\tilde K T}}{T}(t_{2p}-t_{2p-1})}\right)}\\
\displaystyle{\hspace{1.5 cm}
=-\ln \left(1-\frac{e^{-\tilde K T}}{T}(t_{2p}-t_{2p-1})\right)
\sim \frac{e^{-\tilde K T}}{T}(t_{2p}-t_{2p-1}).}
\end{array}
$$
	As a consequence, since $\sum_{p=1}^{+\infty}(t_{2p}-t_{2p-1})<+\infty$ from \eqref{badtilde}, it holds $\sum_{p=0}^{+\infty}\ln\left(\frac{1}{1-\int_{t_{2p-1}}^{t_{2p}}\phi(s)ds}\right)<+\infty$.
	\\So, setting $$C:=e^{\sum_{p=0}^{\infty}\left[\ln\left(\frac{e^{\tilde K(t_{2n+2}-t_{2p+1})}}{2-e^{\tilde K(t_{2p+2}-t_{2p+1})}}\right)+\ln\left(\frac{1}{1-\int_{t_{2p-1}}^{t_{2p}}\phi(s)ds}\right)\right]}\mathcal{L}(t_2),$$
	taking into account of \eqref{lt}, we can conclude that
	\begin{equation}\label{lia5}
		\mathcal{L}(t)\leq C,\quad \forall t\geq t_4.
	\end{equation}
	As a consequence, for all $t\geq t_4$, by definition of $\mathcal{L}$,
	$$\int_{0}^{\underset{s\in [0,t]}{\max}d_{X}(s)}\min\left\{e^{-\tilde KT}\min_{\sigma\in [0,r]}\tilde\psi(\sigma),\frac{e^{-\tilde KT}}{T}\right\}dr\leq \mathcal{L}(t)\leq C.$$ 
	Letting $t\to \infty$ in the above inequality, we finally get \begin{equation}\label{lim2}
		\int_{0}^{\underset{s\in [0,\infty)}{\sup}d_{X}(s)}\min\left\{e^{-\tilde KT}\min_{\sigma\in [0,r]}\tilde\psi(\sigma),\frac{e^{-\tilde KT}}{T}\right\}dr\leq C.
	\end{equation} 
	Then, since the function $\tilde \psi$ satisfies  \eqref{infint}, from \eqref{lim2}, there exists a positive constant $d^{*}$ such that \begin{equation}\label{firstcond}
		\underset{s\in [0,\infty)}{\sup}d_{X}(s)\leq d^{*}.
	\end{equation}
	Now, we define
	$$\phi^{*}:=\min\left\{e^{-\tilde KT}\psi_{*},\frac{e^{-\tilde KT}}{T}\right\},$$
	where $$\psi_{*}=\min_{r\in[0,d^{*}]}\tilde\psi(r).$$
	Note that $\phi^{*}>0$, being $\tilde\psi$ a positive function. Also, from \eqref{firstcond}, it comes that $$\psi_{*}\leq \min\left\{\tilde\psi(r):r\in \left[0,\max_{s\in [0,t]}d_{X}(s)\right]\right\}=\tilde\psi_{t},$$
	for all $t\geq 0$. Thus, we get $$\phi^{*}\leq \phi(t), \quad \forall t\geq 0.$$
	As a consequence, for all $n\in \mathbb{N}_0$, $$\int_{t_{2n}}^{t_{2n+1}}\phi(s)ds\geq \phi^*(t_{2n+1}-t_{2n}),$$
	from which 
	$$1-\int_{t_{2n}}^{t_{2n+1}}\phi(s)ds\leq 1-\phi^*(t_{2n+1}-t_{2n}).$$
	So, recalling of inequality \eqref{n-2cs}, we can write
	\begin{equation}\label{n-2phi*}
		d_V(t_{2n+1})\leq \left(1-\phi^*(t_{2n+1}-t_{2n})\right)d_V(t_{2n}),\quad\forall n \in \mathbb{N}_0.
	\end{equation}
	Now, using \eqref{boundgrowthcs} and \eqref{n-2phi*}, we have that
	$$d_V(t_{2n+2})\leq \frac{e^{\tilde K(t_{2n+2}-t_{2n+1})}}{2-e^{\tilde K(t_{2n+2}-t_{2n+1})}}d_V(t_{2n+1})\leq \frac{e^{\tilde K(t_{2n+2}-t_{2n+1})}}{2-e^{\tilde K(t_{2n+2}-t_{2n+1})}}\left(1-\int_{t_{2n}}^{t_{2n+1}}\phi(s)ds\right)d_V(t_{2n})$$
	$$\leq \frac{e^{\tilde K(t_{2n+2}-t_{2n+1})}}{2-e^{\tilde K(t_{2n+2}-t_{2n+1})}}\left(1-\phi^*(t_{2n+1}-t_{2n})\right)d_V(t_{2n}).$$
	Thus, using an induction argument, we get
	$$d_V(t_{2n+2})\leq \prod_{p=0}^{n}\left(\frac{e^{\tilde K(t_{2p+2}-t_{2p+1})}}{2-e^{\tilde K(t_{2p+2}-t_{2p+1})}}\left(1-\phi^*(t_{2p+1}-t_{2p})\right)\right)d_V(0)$$
	$$=e^{\sum_{p=0}^{\infty}\ln \left(\frac{e^{\tilde K(t_{2p+2}-t_{2p+1})}}{2-e^{\tilde K(t_{2p+2}-t_{2p+1})}}\left(1-\phi^*(t_{2p+1}-t_{2p})\right)\right)}d_V(0)$$
	$$=e^{\sum_{p=0}^{\infty}\left[\ln \left(\frac{e^{\tilde K(t_{2p+2}-t_{2p+1})}}{2-e^{\tilde K(t_{2p+2}-t_{2p+1})}}\right)+\ln\left(1-\phi^*(t_{2p+1}-t_{2p})\right)\right]}d_V(0)$$
	Now, $\sum_{p=0}^{\infty}\ln \left(\frac{e^{\tilde K(t_{2p+2}-t_{2p+1})}}{2-e^{\tilde K(t_{2p+2}-t_{2p+1})}}\right)<+\infty$ from \eqref{sum1tilde}. Then, the solution $\{x_{i},v_{i}\}_{i=1,\dots,N}$ exhibits asymptotic flocking if the following condition is satisfied:
	\begin{equation}\label{psi*}
		\sum_{p=0}^{\infty}\ln\left(\left(1-\phi^*(t_{2p+1}-t_{2p})\right)\right)=-\infty.
	\end{equation}
	However, the above condition is guaranteed since, from \eqref{tnbuoni},
	$$1-\phi^*(t_{2p+1}-t_{2p})\leq 1-\frac{\phi^*}{\tilde K}\in (0,1),\quad \forall p\in \mathbb{N}_0.$$
	Thus, $$\sum_{p=0}^{\infty}\ln\left(\left(1-\phi^*(t_{2p+1}-t_{2p})\right)\right)\leq \sum_{p=0}^{\infty}\ln\left(1-\frac{\phi^*}{\tilde K}\right)=-\infty,$$
	from which \eqref{psi*} is fulfilled.
\end{proof}

\bigskip
\noindent {\bf Acknowledgements.} We thank INdAM-GNAMPA and UNIVAQ  for the support.

\end{document}